\documentclass[11pt]{article}
\usepackage[utf8]{inputenc}
\usepackage[T1]{fontenc}
\usepackage{
    amsmath,
    amssymb,
    amsthm,
    dsfont,
    graphicx,
    float,
    mdframed,
    geometry,
    mathtools,
    xcolor,
    comment,
    subcaption,
    array,
    verbatim,
    url,
    pifont,
    etoolbox
}
\usepackage[font=small]{caption}

\usepackage[
    colorlinks=true,
    citecolor=black,
    filecolor=black,
    linkcolor=black,
    urlcolor=black
]{hyperref}

\usepackage{enumitem}
\setlist[enumerate]{
    label=(\arabic*.),
    itemsep=0pt,
}
\makeatletter
\def\@tocline#1#2#3#4#5#6#7{\relax
  \ifnum #1>\c@tocdepth % then omit
  \else
    \par \addpenalty\@secpenalty\addvspace{#2}%
    \begingroup \hyphenpenalty\@M
    \@ifempty{#4}{%
      \@tempdima\csname r@tocindent\number#1\endcsname\relax
    }{%
      \@tempdima#4\relax
    }%
    \parindent\z@ \leftskip#3\relax \advance\leftskip\@tempdima\relax
    \rightskip\@pnumwidth plus4em \parfillskip-\@pnumwidth
    #5\leavevmode\hskip-\@tempdima
      \ifcase #1
       \or\or \hskip 1em \or \hskip 2em \else \hskip 3em \fi%
      #6\nobreak\relax
    \dotfill\hbox to\@pnumwidth{\@tocpagenum{#7}}\par
    \nobreak
    \endgroup
  \fi}
\makeatother
\usepackage[toc,page]{appendix}
\appto\appendix{\addtocontents{toc}{\protect\setcounter{tocdepth}{1}}}
\appto\listoffigures{\addtocontents{lof}{\protect\setcounter{tocdepth}{1}}}
\appto\listoftables{\addtocontents{lot}{\protect\setcounter{tocdepth}{1}}}

\renewcommand{\Re}{\operatorname{Re}\,}
\renewcommand{\Im}{\operatorname{Im}\,}
\newcommand{\R}{\mathbb{R}}

\numberwithin{equation}{section}

\newtheorem{theorem}{Theorem}[section]
\newtheorem{corollary}{Corollary}[theorem]
\newtheorem{lemma}[theorem]{Lemma}
\newtheorem{proposition}[theorem]{Proposition}
\theoremstyle{definition}
\newtheorem{definition}[theorem]{Definition}

\newtheorem{remark}[theorem]{Remark}

\title{Dyson Brownian motion on a Jordan curve}
\author{ 
Vladislav Guskov\footnote{KTH Royal Institute of Technology, email: vguskov@kth.se.}, 
Mingchang Liu\footnote{Capital Normal University, email: liumc\textunderscore prob@163.com.}, 
Fredrik Viklund\footnote{KTH Royal Institute of Technology, email: frejo@kth.se.}
}
\usepackage[
    backend=biber,
    style=ieee,
    sorting=nyt
]{biblatex} 
\DeclareSourcemap{
  \maps[datatype=bibtex]{
    \map{
      \step[fieldsource=options, match=\regexp{usejournal=false}, final]
      \step[fieldset=journaltitle, null]
      \step[fieldset=journal, null]
    }
  }
}
\renewbibmacro*{in:}{%
  \ifentrytype{article}
    {\iffieldundef{journaltitle}
       {}
       {\printtext{\bibstring{in}\intitlepunct}}}
    {\printtext{\bibstring{in}\intitlepunct}}%
}

\begin{document}
\maketitle
\begin{abstract}
Zabrodin recently proposed a generalization of Dyson Brownian motion to a setting where the particles are confined to a smooth Jordan curve in the plane. In this paper, we discuss a rigorous construction of such a process on a rectifiable Jordan curve and study some of its basic properties. Under further smoothness assumptions, we derive the associated Fokker-Planck-Kolmogorov equation, prove convergence towards the stationary Coulomb gas distribution, study large deviations at low temperature, and derive the limiting mean-field McKean--Vlasov equation in the many-particle limit. 
\end{abstract}

\tableofcontents

%%%%%%%%%%%%%%%%%%%%%%%%%%%%%%%%%%%%%%%%%%%
% BODY
%%%%%%%%%%%%%%%%%%%%%%%%%%%%%%%%%%%%%%%%%%%
\section{Introduction}
\subsection{Background}In an important 1962 paper \cite{dyson1962brownian}, Dyson introduced a model of ``Brownian motion gas'' now known as Dyson Brownian motion. Roughly speaking, this is a stochastic process that describes $N < \infty$ repelling Brownian particles living either on the unit circle or on the real line, interacting pairwise via the logarithmic potential,
 \begin{align} \label{eq:Coulomb_potential}
    - \beta\sum\limits_{1\le i<j\le N}\log|z_{i}-z_{j}|, \quad \beta>0.
     \end{align}
     Here $\beta$ is the inverse temperature parameter. Dyson's process has received considerable attention in both physics and mathematics, in part because of links to random matrix theory. For example, on the real line it describes the dynamics of eigenvalues of a Hermitian random matrix whose entries evolve as independent Brownian motions. See, e.g., \cite{cepa1997diffusing, cepa2001brownian} and the references therein. 

Recently, Zabrodin \cite{zabrodin2023dyson} proposed a generalization of Dyson Brownian motion to a setting where the particles are confined to a general smooth Jordan curve $\Gamma$ in the complex plane $\mathbb{C}$. In this generality, one does not expect a direct link to random matrix dynamics but it was predicted in \cite{zabrodin2023dyson} that the stationary law of the process equals the density of a planar Coulomb gas confined to the curve. That is, the probability measure on $\Gamma^{N} = \Gamma\times\dots\times\Gamma$ given by the density
\begin{align}\label{def:coulomb-gas-density}
    \frac{1}{Z_{\beta,N}(\gamma) N!}\prod_{1 \le i \neq j \le N} |z_i -z_j|^{\beta/2},\qquad z_{i}\in\Gamma, \qquad \beta > 0,
\end{align}
were $Z_{\beta,N}(\gamma)$ is the Coulomb gas partition function,
\begin{align}\label{def:partition-function}
    Z_{\beta,N}(\gamma) = \frac{1}{N!}\int_{\Gamma^{N}}\prod_{1 \le i \neq j \le N} |z_i -z_j|^{\beta/2}\prod_{i=1}^{N}|dz_{i}|.
\end{align}
 This fact, which we prove in Theorem~\ref{thm::exp_conv_to_stationary_distribution}, generalizes a similar property for classical Dyson Brownian motion on the unit circle and could, in a sense, be taken as a defining property of the process. 
 %We describe the approach of \cite{zabrodin2023dyson} below.  

The purpose of this paper is to discuss a rigorous construction of Zabrodin's process on a Jordan curve satisfying minimal regularity assumptions and to study some of its basic properties. In addition to being a rather natural object of (we feel) intrinsic interest, our motivation comes from recent work \cite{WiegmannZabrodin2022_DysonGasCurvedContour, Kurtsego, johansson2023coulomb, courteaut2025planar} on confined planar Coulomb gases and the distribution of Fekete points; the model we study here is the natural dynamical version of a Coulomb gas on the curve. We will treat Dyson Brownian motion on a Jordan arc in a forthcoming paper \cite{jordan-arc-paper}.
\subsection{Main results}
\subsubsection*{Existence}Let $\Gamma$ be a rectifiable Jordan curve. We shall always assume that $\Gamma$ is a curve in $\mathbb{C}$. By Rademacher's theorem, $\Gamma$ admits a parametrization by arc-length and we write $(\gamma(s))_{s\in[0,l)}$ for this parametrization, where $l=l(\Gamma)$ is the length of $\Gamma$. It will be convenient to extend the domain of $\gamma$ to $\mathbb{R}/ l(\Gamma)\mathbb{Z}$, making $\gamma$ an $l$-periodic function on the real line. 
Let
\begin{align}\label{def:rho}
     \rho_{\beta,N}(x)= \frac{1}{Z_{\beta, N}(\gamma)}\prod\limits_{i\neq j}|\gamma(x_{i})-\gamma(x_{j})|^{\beta/2}, \quad x_j\in \mathbb{R}, \quad \beta>0,
\end{align}
where $Z_{\beta,N}$ is as in \eqref{def:partition-function} and we are suppressing the additional condition that $1 \le i,j \le N$ in the notation.

\begin{definition}[Parametrization process]\label{def::parametrization_process}\label{def_X_process}
        Let $\Gamma$ be a rectifiable Jordan curve and fix the inverse temperature parameter $\beta > 0$. The parametrization process for Dyson Brownian motion on $\Gamma$ is a continuous strong Markov process $X = (X_{1}(t),\dots, X_{N}(t))_{t\ge 0}$ in $\mathbb{R}^N$, with diffusion matrix equal to the identity and drift given by the weak gradient of $\frac{1}{2}\log\rho_{\beta, N}$, where $\rho_{\beta, N}$ is as in \eqref{def:rho}, and such that for all $t\ge 0$, almost surely,
        \[
             X_{1}(t)\le X_{2}(t)\le \dots \le  X_{N}(t)\le X_{1}(t)+l.
        \]
\end{definition}
\begin{remark}
    Let 
    \begin{align}\label{def:weyl}
     D = \{x\in\mathbb{R}^{N}: x_{1}<x_{2}<\dots<x_{N}<x_{1}+l\}.
    \end{align}
    The ordering condition in Definition \ref{def::parametrization_process} can be stated as: the $\mathbb{R}^{N}$-valued process $X$ never leaves the closure of $D$.
    % This set is the state space for the parametrization process when $\beta \ge 1$.
    \end{remark}
    \begin{remark}
    We sometimes write $X=X^x$ to emphasize the starting point $x \in D$.
    \end{remark}

    Given the parametrization process, we define Dyson Brownian motion on $\Gamma$ by transplanting using the arc-length parametrization.
    \begin{definition}[Dyson Brownian motion on a Jordan curve]\label{def_Dyson_BM_contour} Let $\Gamma$ be a rectifiable Jordan curve. Dyson Brownian motion on $\Gamma$ is the continuous strong Markov process $\boldsymbol{Z}=(\gamma(X_{1}),\dots,\gamma(X_{N}))$, taking values in $\Gamma^{N}$, where $X=(X_{1},\dots,X_{N})$ is the parametrization process of Definition~\ref{def_X_process}. 
    \end{definition}
It is not obvious a priori that a process as in Definition~\ref{def_Dyson_BM_contour} exists. The following result gives existence in the non-colliding regime.
\begin{theorem}[Existence for $\beta \ge 1$]\label{thm:existence-intro}
    Let $\Gamma$ be a rectifiable Jordan curve. For any $\beta\ge 1$ and any collection $\boldsymbol{z} =\{z_{1},\dots, z_{N}\}$ of points on $\Gamma$, ordered counterclockwise, such that $z_{i}\neq z_{j}$, there exists a parametrization process $X = (X_{1}(t), \dots,X_{N}(t))_{t\ge 0}$, with $z_{i} = \gamma(X_{i}(0))$, for Dyson Brownian motion on $\Gamma$. 
    
    This process is the unique strong solution of the stochastic differential equation (SDE)
    \[
        dX(t) = dB(t) + \frac{1}{2}\nabla\log\rho_{\beta,N}(X(t))dt,
    \]
    where $B = (B_1(t), \ldots, B_N(t))_{t \ge 0}$ is $N$-dimensional standard Brownian motion. Moreover, for all $t\ge 0$, almost surely,
    \[
        X_{1}(t)<X_{2}(t)<\dots < X_{N}(t)<X_{1}(t)+l.
    \]
    
    Setting
    \[
        \boldsymbol{Z} = (\gamma(X_{1}(t)), \dots,\gamma(X_{N}(t)))_{t\ge 0}
    \]
    defines Dyson Brownian motion on $\Gamma$.
\end{theorem}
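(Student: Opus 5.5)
The plan is to build the process in two stages. First, a local strong solution up to the first hitting time of $\partial D$. Second, a Dirichlet form argument showing that for $\beta\ge1$ this hitting time is infinite. Since $\gamma$ is only Lipschitz, $\gamma'$ exists a.e. with $|\gamma'|=1$, and the drift is
\[
b_i(x)=\frac{\beta}{2}\sum_{j\neq i}\Re\frac{\gamma'(x_i)\,\overline{(\gamma(x_i)-\gamma(x_j))}}{|\gamma(x_i)-\gamma(x_j)|^{2}} .
\]
This is only bounded measurable, not continuous, so Itô-formula arguments based on second derivatives of $\log\rho_{\beta,N}$ are not available. The argument must therefore use only $L^\infty$ bounds on the drift and the structure of $\rho_{\beta,N}$.

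\emph{Step 1: local strong solutions.} Let $K_n\uparrow D$ be an exhaustion by sets that are invariant under the simultaneous shift $x\mapsto x+l(1,\dots,1)$ and have compact image mod this shift. Because $\Gamma$ is a Jordan curve and $\gamma$ is continuous, $|\gamma(x_i)-\gamma(x_j)|$ is bounded below on $K_n$, so $b$ is bounded on $K_n$. Extend $b|_{K_n}$ to a bounded measurable drift on $\mathbb{R}^N$. By Veretennikov's theorem (Zvonkin's transform: bounded measurable drift with identity diffusion) the SDE then has a unique strong solution. Pathwise uniqueness lets these solutions be patched consistently into a unique strong solution on $[0,\tau)$, where $\tau=\lim_n\tau_{K_n}$ is the exit time from $D$. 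Periodicity in the shift direction rules out any separate explosion issue, since the drift depends only on differences and on $x$ mod $l$. Uniqueness of the solution in Theorem~\ref{thm:existence-intro} reduces to this step once $\tau=\infty$ is known.

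\emph{Step 2: non-collision via capacity.} Consider the symmetric form $\mathcal{E}(f,f)=\frac12\int_D|\nabla f|^2\rho_{\beta,N}\,dx$ on the quotient of $D$ by the shift. Its generator is formally $\frac12\Delta+\frac12\nabla\log\rho_{\beta,N}\cdot\nabla$. Since $|\gamma(x)-\gamma(y)|$ is at most the arc distance, the bound $\rho_{\beta,N}\le C\prod_{i<j}d_l(x_i,x_j)^{\beta}$ holds for free; note that no chord-arc lower bound is needed. With this upper bound, logarithmic cutoffs near each face $\{x_i=x_{i+1}\}$ have energy $\int_{\epsilon^2}^{\epsilon} r^{\beta}\,r^{-2}\log(1/\epsilon)^{-2}\,dr\to0$ when $\beta\ge1$. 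This shows that $\partial D$ has zero capacity, which is the threshold behind the assumption $\beta\ge 1$. Fukushima's theory then gives a $\rho_{\beta,N}$-symmetric diffusion that, from quasi-every starting point, never reaches $\partial D$. By uniqueness in law from Step 1, on $[0,\tau)$ this diffusion must coincide in law with the strong solution. Hence $\tau=\infty$ a.s. for quasi-every, in particular Lebesgue-a.e., starting point.

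\emph{Step 3: every starting point.} Fix $x\in D$ and a small $\varepsilon>0$. Up to time $\varepsilon\wedge\tau_{K_n}$ the solution is a Girsanov transform of Brownian motion with bounded drift, so the law of $X(\varepsilon)$ on $\{\tau>\varepsilon\}$ is absolutely continuous. The strong Markov property at time $\varepsilon$ then gives $P(\tau<\infty)\le P(\tau\le\varepsilon)$. Letting $\varepsilon\to0$ yields $\tau=\infty$ a.s. from $x$, which is the strict ordering $X_1<\dots<X_N<X_1+l$. Continuity, the strong Markov property, and identity of the drift with the weak gradient of $\frac12\log\rho_{\beta,N}$ then follow, since $\log\rho_{\beta,N}$ is locally Lipschitz in $D$. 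So $X$ is a parametrization process in the sense of Definition~\ref{def_X_process}, and $\boldsymbol Z$ is as in Definition~\ref{def_Dyson_BM_contour}.

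\emph{Main obstacle.} I expect the main difficulty to be Step 2, namely rigorously identifying the Dirichlet-form process with the SDE solution. The form $\mathcal{E}$ must be closable with $C_c^\infty(D)$ as a core, and the weight $\rho_{\beta,N}$ is only locally Lipschitz and positive in $D$. I would first try to handle this via local equivalence to Brownian motion on each $K_n$.
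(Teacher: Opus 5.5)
Your proposal follows essentially the paper's route:
\begin{itemize}
\item a local strong solution up to the exit time from $D$ (Veretennikov plus localization in place of the paper's citation of Krylov--R\"ockner);
\item a $\rho_{\beta,N}$-weighted Dirichlet form in which the collision set has zero capacity for $\beta\ge1$, thanks to $\rho_{\beta,N}\le C\prod_{i<j}d_l(x_i,x_j)^{\beta}$ (your logarithmic cutoffs are the content of the Inukai-type criterion the paper invokes);
\item identification with the SDE solution for almost every starting point;
\item an upgrade to every starting point via absolute continuity of the law at a small time and the Markov property. Here your Girsanov argument and your explicit handling of $\{\tau\le\varepsilon\}$ with $\varepsilon\to0$ are cleaner than the paper's forward appeal to the FPK density.
\end{itemize}
The only piece you leave implicit is that $\boldsymbol{Z}=\gamma(X)$ is itself strong Markov. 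This requires the shift-equivariance $X^{x+kl\hat{e}}=X^{x}+kl\hat{e}$ (from pathwise uniqueness and periodicity of the drift), so that Dynkin's criterion for functions of Markov processes applies, as the paper does when passing back to the curve.
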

\noindent Theorem~\ref{thm:existence-intro} is proved in Section~\ref{sec::existence}. 
\begin{remark}
    The choice $\gamma(s)=e^{is}$ reproduces the angle process $\theta = (\theta_{1}(t),\dots,\theta_{N}(t))_{t\ge 0}$ of classical Dyson Brownian motion on the unit circle:
    \[
        d\theta_{i}(t) 
        = dB_{i}(t) + \frac{\beta}{2}\sum\limits_{j:j\neq i}\frac{1}{2}\cot\left(\frac{\theta_{i}(t)-\theta_{j}(t)}{2}\right)dt.
    \]
    The choice $\gamma(s)=s$ reproduces classical Dyson Brownian motion on the real line:
    \[
        dX_{i}(t) = dB_{i}(t) + \frac{\beta}{2}\sum\limits_{j:j\neq i}\frac{1}{X_{i}(t)-X_{j}(t)}dt.
    \]
  \end{remark}
\begin{remark}
   If $\gamma \in C^2$, we can apply It\^{o}'s formula to $Z(t) = \gamma(X(t))$, to see that our definition agrees with Zabrodin's process in \cite{zabrodin2023dyson}, see \eqref{eq: SDE_Zabrodin} below.
    \end{remark}
\begin{remark}\label{time-change-remark}
  Applying a deterministic time-change $t\to \beta t/2$ and using the scaling property of Brownian motion, one may equivalently consider the SDE
    \[
        dX(t) = \sqrt{\kappa}dB(t) - \nabla V(X(t))dt,
    \]
    where \begin{align}\label{def:V}
        V(x) = - \sum_{i<j}\log|\gamma(x_{i})-\gamma(x_{j})|,
    \end{align}
    in order to construct the parametrization process. The inverse temperature $\beta$ and the diffusion parameter $\kappa$ are related via
    \[
        \kappa = \frac{2}{\beta}.
    \]
    We shall use this parametrization when discussing large deviations below.
    \end{remark}
\begin{remark}
    The existence of the parametrization process in the regime $0<\beta<1$ is technically more difficult to establish as one has to take into account collisions between particles. That is, the process $X^{x}$ hits the boundary $\partial D$ (where $D$ is as in \eqref{def:weyl}) in finite time and must reflect back into the domain $D$. Under the general assumption that the curve $\Gamma$ is rectifiable, one has to solve an SDE with reflecting boundary conditions in the non-smooth domain $D$ with a measurable drift which is singular at the boundary. This is an interesting problem but we will not address it here, see  \cite{guskov2026thesis} for further discussion.
\end{remark}
\subsubsection*{FPK equation and Coulomb gas density}
We next discuss the Fokker-Planck-Kolmogorov (FPK) equation corresponding to the process constructed in Theorem~\ref{thm:existence-intro}. Set \[b_{\beta,N}=\frac{1}{2}\nabla\log\rho_{\beta, N}.\] We define the differential operator 
\begin{equation}\label{eq::operator_L}
    Lf = \frac{1}{2}\Delta f + b_{\beta,N}\cdot \nabla f,\qquad f\in C_{0}^{\infty}(D),
\end{equation}
and denote by $L^{*}$ its formal adjoint,
\begin{equation}\label{eq::adjoint_operator}
    L^{*}f = \frac{1}{2}\Delta f - \nabla\cdot\left( b_{\beta,N}f\right).
\end{equation}
\begin{theorem}[Fokker-Planck-Kolmogorov equation]\label{thm:FPK-intro}
Let $\Gamma$ be a rectifiable Jordan curve.  The transition probability function $P(x,t,dy)$ of the parametrization process $X=X^{x}$ with the state space $D$, defined in \eqref{def:weyl}, has a positive locally Hölder continuous density $p(x,t,y)$ for every $t>0$. The density satisfies the Fokker-Planck-Kolmogorov equation with reflecting boundary conditions, that is,
\begin{align*}
    &\partial_{t}p(x,t,\bullet)=L^{*}p(x,t,\bullet) \text{ weakly in }D,\\
    &n\cdot\left(-\frac{1}{2}\rho_{\beta,N}\frac{\nabla p(x,t,\bullet)}{\rho_{\beta,N}} \right) = 0 \text{ weakly on }\partial D.
\end{align*}
where $n$ is the unit inward-pointing normal vector.
\end{theorem}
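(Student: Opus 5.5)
The plan is to use the Dirichlet form attached to the weight $\rho=\rho_{\beta,N}$. On the closure of $D$ the natural symmetric form is
\[
\mathcal{E}(f,g)=\frac{1}{2}\int_D \nabla f\cdot\nabla g\,\rho\,dx,
\]
defined first on $C_c^\infty(\mathbb{R}^N)$ restricted to $\overline{D}$, with the periodicity in $x\mapsto x+l(1,\dots,1)$ built in so that we work on a compact quotient. Its generator is formally $\frac12\rho^{-1}\nabla\cdot(\rho\nabla f)=Lf$, with Neumann (reflecting) conditions.

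\textbf{Step 1: regularity of $\rho$.} Since $\gamma$ is $1$-Lipschitz and injective on a period, $|\gamma(x_i)-\gamma(x_j)|$ is locally bounded above and below on compact subsets of $D$, so $\rho$ is locally bounded above and below there. I would then identify the process of Theorem~\ref{thm:existence-intro} with the Hunt process of the closure of $(\mathcal{E},C_c^\infty(D))$ in $L^2(\rho\,dx)$. Both solve the same martingale problem; the strong solution is unique and never reaches $\partial D$ when $\beta\ge1$. For $\beta\ge1$ the weight vanishes at least linearly near collisions, so the minimal and maximal (Neumann) forms should coincide, i.e.\ $\partial D$ is polar/non-charged.

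\textbf{Step 2: heat kernel.} Locally $\mathcal{E}$ is uniformly elliptic in divergence form with bounded measurable coefficients $\rho$. Moser--Nash--Aronson theory (De Giorgi--Nash Hölder continuity plus the parabolic Harnack inequality) then gives a transition density $q(x,t,y)$ with respect to $\rho(y)\,dy$. This density is symmetric, locally Hölder continuous in $(x,t,y)$ on $D$, and strictly positive, since $D$ is connected and Harnack chains apply. Setting $p(x,t,y)=q(x,t,y)\rho(y)$ gives a density with respect to Lebesgue measure. It is positive, and it is locally Hölder because $\rho$ is only measurable in general, but since $\gamma$ is Lipschitz, $\rho$ is locally Lipschitz in $D$.

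\textbf{Step 3: FPK equation.} For $\phi\in C_c^\infty(\mathbb{R}^N)$ (periodic), symmetry gives
\[
\frac{d}{dt}\int p\,\phi\,dy=-\mathcal{E}(q(x,t,\cdot),\phi)=-\frac12\int \rho\,\nabla\!\left(\frac{p}{\rho}\right)\cdot\nabla\phi\,dy .
\]
If $\phi$ is supported in $D$, an integration by parts rewrites this as $\int pL\phi$, which is exactly $\partial_t p=L^*p$ weakly. If $\phi$ does not vanish on $\partial D$, the same identity is precisely the weak formulation of the zero-flux condition $n\cdot\bigl(-\tfrac12\rho\nabla(p/\rho)\bigr)=0$.

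\textbf{Main obstacle.} The hardest step is Step 1 together with the boundary identification: showing that the form domain built from $C_c^\infty(D)$ equals the Neumann domain, so that test functions nonvanishing at $\partial D$ are admissible. I would try a capacity argument with cutoffs $\chi_\epsilon(\min_{i}|x_{i+1}-x_i|)$. Bi-Lipschitz comparability of $|\gamma(x_i)-\gamma(x_j)|$ with $|x_i-x_j|$ fails for general rectifiable curves, so the needed bound is $\int|\nabla\chi_\epsilon|^2\rho\to0$. I would get it from the upper bound $|\gamma(x_i)-\gamma(x_j)|\le|x_i-x_j|$ alone, which gives $\rho\lesssim \prod|x_i-x_j|^{\beta}$ near a collision, and $\beta\ge1$ is what makes $\int|\nabla\chi_\epsilon|^2\rho\to0$.
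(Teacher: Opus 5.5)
Your proposal is a viable proof, but it follows a genuinely different route from the paper's. The paper does not use the symmetric structure for this result. It applies It\^o's formula to see that $P(x,t,dy)$ is a measure-valued solution of the Cauchy problem for $\partial_t\mu=L^*\mu$. It then invokes the Bogachev--Krylov--R\"ockner--Shaposhnikov regularity result (Lemma~\ref{lem::existence_of_density}: locally bounded drift gives a locally H\"older density of $P(x,t,dy)\,dt$) and passes from a.e.\ $t$ to every $t$ via the Feller property. Finally it handles the boundary by \emph{defining} $n\cdot J$ on $\partial K$ through Green's identity and letting $K\uparrow D$.

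That route is short, purely local, works for non-gradient drifts and needs no capacity estimate. On the other hand, the cited lemma only gives $p\ge 0$ (positivity is asserted, not proved), and the boundary condition is close to definitional. Your Dirichlet-form and De Giorgi--Nash--Moser route has two costs: identifying $X$ with the Hunt process of $\frac12\int\nabla f\cdot\nabla g\,\rho\,dx$, and a boundary capacity estimate. It buys more in return:
\begin{itemize}
\item the detailed-balance symmetry $\rho(x)p(x,t,y)=\rho(y)p(y,t,x)$;
\item an actual proof of strict positivity via Harnack chains;
\item the standard weak Neumann formulation $\frac{d}{dt}\int p\phi\,dy=-\frac12\int\rho\nabla(p/\rho)\cdot\nabla\phi\,dy$ for $\phi$ smooth up to $\partial D$. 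There the role of $\beta\ge1$ is explicit: it is the integrability $\int_{0+}r^{-\beta}dr=\infty$ behind Proposition~\ref{prop:capacity_zero}.
\end{itemize}

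When you write it out, tighten the following points.
\begin{itemize}
\item The theorem concerns $X$ on the unbounded $D$, not the quotient process on $E$, whose density is $\sum_k p(x,t,y+kl\hat e)$. So the form must live on $L^2(D,\rho\,dx)$. There $\rho\,dx$ is infinite and $\hat e$-invariant cutoffs have infinite energy, so you must localize the capacity argument (multiply by a compactly supported cutoff and use countable subadditivity of capacity). Take $\phi\in C_c^\infty(\mathbb{R}^N)$ restricted to $\overline D$; ``compactly supported and periodic'' is contradictory.
\item At $\beta=1$, linear cutoffs give $\int|\nabla\chi_\epsilon|^2\rho\asymp\epsilon^{\beta-1}$, which stays bounded but does not tend to zero. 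Either use logarithmic cutoffs, or note that $\phi\chi_\epsilon$ is bounded in form norm and converges in $L^2(\rho\,dx)$, so $\phi$ lies in the weakly closed minimal domain.
\item The Hunt process is a priori defined only quasi-everywhere. Obtain agreement for every $x$ in two steps. First, a.e.\ agreement of the semigroups follows from the Fukushima decomposition plus pathwise uniqueness. Second, upgrade using continuity in $x$ on both sides: the strong Feller property of the Krylov--R\"ockner solution, and continuity of $x\mapsto\int q(x,t,y)f(y)\rho(y)\,dy$ from local parabolic regularity.
\item Do not quote non-collision from Theorem~\ref{thm:existence-intro}. The paper's proof of its ``every starting point'' claim uses the absolute continuity established in this very statement, so citing it would be circular. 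Derive non-collision on the Dirichlet-form side instead; your heat kernel supplies the absolute continuity condition that makes this possible.
\end{itemize}
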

\noindent Theorem~\ref{thm:FPK-intro} is proved in Section~\ref{sec::FPK_and_ergodicity}.
\begin{remark}
Under the additional smoothness assumption that $\gamma \in C^{2}$, we show in Proposition~\ref{prop::Kolmogorov_diffusion} that the parametrization process $X=X^x$ is a diffusion process in the sense of Kolmogorov, that is, 
\begin{enumerate}
    \item $\lim\limits_{h\to 0+}\frac{1}{h}\int_{B(x,\delta)^{c}}p(x,h,y)dy=0,$
    \item $\lim\limits_{h\to 0+}\frac{1}{h}\int_{B(x,\delta)}(y-x)p(x,h,y)dy = b_{\beta,N}(x),$
    \item $\lim\limits_{h\to 0+}\frac{1}{h}\int_{B(x,\delta)}((y-x)\cdot z )^2p(x,h,y)dy= |z|^2.$
\end{enumerate}  
\end{remark}

The mapping $g:D\to\Gamma^{N}$ given by
\[
    g(x) = \left(\gamma(x_{1}),\dots,\gamma(x_{N})\right)
\]
transplants the parametrization process $X$ to Dyson Brownian motion $\boldsymbol{Z}$ on the curve $\Gamma$. By Theorem~\ref{thm:existence-intro}, both $X$ and $\boldsymbol{Z}$ are continuous strong Markov processes. Let $P(x,t,dy)$ and $Q(\boldsymbol{z}, t, |d\boldsymbol{w}|)$ denote the transition probability functions of these processes correspondingly. If $f(x) = \boldsymbol{z}$, then the transition probability functions are related via 
\[
    Q(\boldsymbol{z}, t, \mathcal{V}) = P(x,t,g^{-1}(\mathcal{V})), \qquad \mathcal{V}\in\mathcal{B}_{S},
\]
where $\mathcal{B}_{S}$ is the Borel $\sigma$-algebra on $S$.

The next result concerns convergence towards the stationary distribution. This theorem motivates calling the process of Theorem~\ref{thm:existence-intro} ``Dyson Brownian motion''.
\begin{theorem}[Convergence towards Coulomb gas density]\label{thm::exp_conv_to_stationary_distribution}
Suppose $\gamma \in C^\infty$. Let $Q(\boldsymbol{z}, t, |d\boldsymbol{w}|)$ be the transition probability function of the Dyson Brownian motion started at $\boldsymbol{z} = (z_{1},\dots, z_{N})$. Then, as $t\to+\infty$, the measure $Q(\boldsymbol{z}, t, |d\boldsymbol{w}|)$ converges weakly, exponentially fast, to the unique stationary distribution on $(S, \mathcal{B}_{S})$ given by the Coulomb gas density 
\begin{equation}\label{eq:density_z}
     \varrho_{\beta,N}(\boldsymbol{z})=\frac{1}{Z_{\beta,N}(\gamma)}\prod_{i \neq j } |z_i -z_j|^{\beta/2},\qquad \boldsymbol{z}\in S.
\end{equation}
\end{theorem}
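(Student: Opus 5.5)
We reduce the statement to the parametrization process on $D$ and prove exponential convergence of $P(x,t,dy)$ to $\rho_{\beta,N}(y)\,dy$ (restricted to $D$ and suitably normalized). The bijection $g$ sends this measure to the Coulomb gas density \eqref{eq:density_z}. The relation $Q(\boldsymbol z,t,\mathcal V)=P(x,t,g^{-1}(\mathcal V))$ then transfers weak convergence and the rate. Since $\gamma$ is parametrized by arc length, $|d\boldsymbol w|$ corresponds to Lebesgue measure, so no Jacobian appears.

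\emph{Step 1 (stationarity and reversibility).} The generator $L$ in \eqref{eq::operator_L} is symmetric in $L^2(D,\rho_{\beta,N}dy)$, because $Lf=\frac{1}{2}\rho_{\beta,N}^{-1}\nabla\cdot(\rho_{\beta,N}\nabla f)$. Hence $L^*\rho_{\beta,N}=0$, and $\rho_{\beta,N}$ satisfies the reflecting (zero-flux) boundary condition of Theorem~\ref{thm:FPK-intro} trivially. I would realise $X$ as the diffusion associated with the Dirichlet form
\[
\mathcal E(f,f)=\tfrac12\int_D|\nabla f|^2\rho_{\beta,N}\,dy .
\]
Using the uniqueness of the strong solution in Theorem~\ref{thm:existence-intro}, I would identify its semigroup $P_t$ with the one given by the FPK equation. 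It follows that $\mu=\rho_{\beta,N}dy$ is invariant and that $P_t$ is self-adjoint on $L^2(\mu)$.

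\emph{Step 2 (spectral gap).} Here I quotient by the rotation $x\mapsto x+l(1,\dots,1)$, under which $\rho_{\beta,N}$ is periodic. The relevant state space is then the compact set $\overline D/\sim$, which is essentially $\Gamma^N$ restricted to ordered configurations. I would prove a Poincaré inequality
\[
\operatorname{Var}_\mu(f)\le C\,\mathcal E(f,f).
\]
Since $\gamma\in C^\infty$ and $\Gamma$ is Jordan, there are constants with $c|x_i-x_j|\le|\gamma(x_i)-\gamma(x_j)|\le C|x_i-x_j|$ for arclength distance $|x_i-x_j|$ modulo $l$. Therefore $\rho_{\beta,N}$ is comparable, up to bounded factors, to the circular Dyson weight $\prod_{i\ne j}|e^{2\pi i x_i/l}-e^{2\pi i x_j/l}|^{\beta/2}$. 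The Poincaré inequality is stable under bounded perturbations of the density (Holley--Stroock), and it is known for the circular $\beta$-ensemble, where the spectral gap is explicit via Jack polynomials or Bakry--Émery after splitting off the center of mass. So the inequality holds for $\mu$.

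\emph{Step 3 (convergence).} The spectral gap gives $\|P_tf-\mu(f)\|_{L^2(\mu)}\le e^{-t/C}\|f-\mu(f)\|_{L^2(\mu)}$. To pass from a fixed starting point $x$ to weak convergence, I would use $P_{t+1}f(x)=\int p(x,1,y)P_tf(y)\,dy$. It then suffices to know that $p(x,1,\cdot)/\rho_{\beta,N}\in L^2(\mu)$. Cauchy--Schwarz gives $|P_{t+1}f(x)-\mu(f)|\le\|p(x,1,\cdot)/\rho_{\beta,N}\|_{L^2(\mu)}e^{-t/C}\|f\|_\infty$. This yields exponential weak convergence and also uniqueness of the stationary law, since any invariant measure must agree with $\mu$ on bounded continuous $f$.

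\emph{Main obstacle.} I expect the hardest step to be the $L^2(\mu)$ bound on $p(x,1,\cdot)/\rho_{\beta,N}$ near $\partial D$, where $\rho_{\beta,N}$ vanishes. I would first try the boundary Harnack or ultracontractivity route: prove a Nash/Sobolev inequality for $\mathcal E$, again by comparison with the circular ensemble. This gives $\|P_t\|_{L^1(\mu)\to L^\infty}<\infty$ and hence a bounded density of $P(x,t,\cdot)$ with respect to $\mu$ for every $t>0$. The pointwise statement is then obtained by Markov-property regularization from $x$ using the Hölder continuity in Theorem~\ref{thm:FPK-intro}. A fallback for pointwise $x$ is Doeblin's argument: positivity and continuity of $p$ on compacts give a minorization, and a Lyapunov function $-\log\rho_{\beta,N}$ (for which $L(-\log\rho)\le C$ when $\beta\ge1$) controls the excursions near $\partial D$.
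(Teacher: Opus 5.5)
Your proposal is correct, but it reaches the theorem by a different route from the paper.

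\emph{The paper's route.} It works with the density $\rho_t(x,\cdot)$ of the quotient process on $E$. Using $\gamma\in C^\infty$, it treats the FPK equation pointwise and differentiates $\int_E(\rho_t/\rho_{\beta,N}-1)^2\rho_{\beta,N}\,dy$ in $t$. It discards the boundary terms via the weak reflecting condition on $\partial E\cap\partial D$ and the cancellation between the identified faces $\{x_1=0\}$ and $\{x_1=l\}$. It then closes with Lemma~\ref{lem::Poincare_inequality}, proved by hand: telescoping in the coordinates, Cauchy--Schwarz against $\rho_{\beta,N}$ and $1/\rho_{\beta,N}$, and an asserted (not derived) uniform bound on the resulting integral.

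\emph{Your route.} You obtain the Poincar\'e inequality structurally. You use chord-arc comparison with the circular weight, Holley--Stroock, and Bakry--\'Emery on the simplex $D\cap\hat{e}^{\perp}$, tensorized with the uniform centre of mass. On that simplex the Hessian of minus the log of the circular weight is at least $\beta N\pi^2/l^2$. You then get the decay from the spectral theorem, which avoids the boundary integrations by parts altogether.

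\emph{Two small corrections.} First, use Bakry--\'Emery rather than Jack polynomials. The Jack spectrum describes symmetric (unlabelled) functions, while $E$ carries labelled particles in a fixed cyclic order and has further, smaller eigenvalues. An example is $\cos(2\pi\bar x/l)$ with $\bar x=\frac1N\sum_i x_i$. Second, the Doeblin fallback needs a geometric drift $LW\le-\lambda W+K$, which $L(-\log\rho_{\beta,N})\le C$ does not give. For $\beta>1$, a small negative power of $\rho_{\beta,N}$ does give it, by the paper's Dyson-type bound $\Delta V\le|\nabla V|^2+B$ together with $|\nabla V|\to\infty$ at $\partial D$.

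\emph{Where your route is stronger.} The substantive gain is the initial condition. The paper's estimate $\le e^{-t/C}$ carries no prefactor, yet the $\chi^2$-divergence of a point mass is infinite. So a bound on $\|p(x,t_0,\cdot)/\rho_{\beta,N}\|_{L^2(\mu)}$ for some $t_0>0$ is genuinely required, and the paper does not supply it. Your ultracontractivity step does supply it. The passage to a fixed $x$ needs only $P(x,s,\cdot)\ll dy$ (Proposition~\ref{prop::FPK_X_process}) and Chapman--Kolmogorov. The Nash inequality for the circular weight can be derived from volume doubling plus scale-invariant Poincar\'e inequalities on balls, since that weight is log-concave on the convex chamber.
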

\noindent We prove this result in Section~\ref{sec::ergodicity}.
\subsubsection*{Large deviations}
In the next statement we consider the time-changed process, with parameter $\kappa = 2/\beta$, as in Remark~\ref{time-change-remark}. As $\kappa \to 0+$, equivalently $\beta \to +\infty$, the system converges to a deterministic gradient flow and we are interested in large deviations for this limit. In order to state the result, let $\boldsymbol{Z}= (Z_{1}(t),\dots,Z_{N}(t))_{t\ge 0}$ be (time-reparametrized) Dyson Brownian motion started at $\boldsymbol{z} = (z_{1},\dots,z_{N})$. The process is given by $Z_{i}(t) = \gamma(X_{i}(t))$, where the parametrization process $X = (X_{1}(t),\dots,X_{N}(t))_{t\ge 0}$ is a strong solution to the SDE
    \[
        dX(t) =\sqrt{\kappa}dB(t)  - \nabla V(X(t))dt,
    \]
    where $V(x)$ is as in \eqref{def:V}. Recall that $S = f(D) \subset \Gamma^N$. Let $C_{\boldsymbol{z}}(\mathbb{R}_+, S)$ be the space of continuous functions started at $\boldsymbol{z}\in S$. We equip this space with the topology of locally uniform convergence. 
\begin{theorem}[Large deviations as $\kappa \to 0+$]\label{thm::LDP_Z}
     Suppose $\gamma\in C^{3}$. The process $\boldsymbol{Z}$ satisfies a large deviation principle in $C_{{\boldsymbol{z}}}(\mathbb{R}_+, S)$, as $\kappa \to 0+$, with rate $1/\kappa$ and good rate function
    \[
        J(\boldsymbol{w}) 
        = \frac{1}{2}\int_{0}^{\infty}\sum\limits_{i=1}^{N}\left|\Re\left\{\left(\dot w_{i} - \sum\limits_{j:j\neq i}\frac{w_{i}-w_{j}}{|w_{i}-w_{j}|^{2}}\right)\overline{\tau(w_{i})}\right\}\right|^2dt,
    \]
     for absolutely continuous $\boldsymbol{w} = (w_{1}(t), \dots, w_{N}(t))_{t\ge 0}$, and set to $+\infty$ otherwise.
\end{theorem}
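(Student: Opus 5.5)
We will obtain the large deviation principle for $\boldsymbol{Z}$ from a large deviation principle for the parametrization process $X$, and then transport it to the curve with the contraction principle. The map $x\mapsto g(x)=(\gamma(x_1),\dots,\gamma(x_N))$ is continuous, so it induces a continuous map $C_x(\mathbb{R}_+,\overline D)\to C_{\boldsymbol z}(\mathbb{R}_+,S)$. Since $\gamma$ is injective on $[0,l)$ and paths are continuous, lifting is unique once the starting point $x$ is fixed. Hence the induced map is in fact a homeomorphism onto its image, and the rate function for $\boldsymbol{Z}$ is $J(\boldsymbol w)=I(g^{-1}\circ \boldsymbol w)$. Here $I$ is the rate function for $X$.

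\textbf{The Freidlin--Wentzell rate function for $X$.} We expect
\[
I(\phi)=\frac12\int_0^\infty |\dot\phi+\nabla V(\phi)|^2\,dt
\]
for absolutely continuous $\phi$ with $\phi(0)=x$ and $\phi(t)\in D$, and $I(\phi)=+\infty$ otherwise. To rewrite this on the curve, set $w_i=\gamma(\phi_i)$. Then $\dot w_i=\tau(w_i)\dot\phi_i$ with $|\tau|=1$, so $\dot\phi_i=\Re\{\dot w_i\overline{\tau(w_i)}\}$. A direct computation gives
\[
\partial_{x_i}V=-\sum_{j\ne i}\Re\Big\{\frac{(w_i-w_j)\overline{\tau(w_i)}}{|w_i-w_j|^2}\Big\}.
\]
Combining these yields exactly the integrand of $J$. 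This identification is routine.

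\textbf{Proving the LDP for $X$ despite the singular drift.} The drift $-\nabla V$ is $C^2$ on $D$ when $\gamma\in C^3$, but it blows up at $\partial D$. We therefore localize. For $\varepsilon>0$, let $D_\varepsilon=\{x\in D:\ V(x)<1/\varepsilon\}$, and let $b_\varepsilon$ be a globally Lipschitz, bounded modification of $-\nabla V$ that agrees with it on $D_\varepsilon$. The classical Freidlin--Wentzell theorem on finite intervals $[0,T]$, upgraded to $\mathbb{R}_+$ with the locally uniform topology via the Dawson--Gärtner projective limit, gives an LDP for the truncated diffusions $X^{\varepsilon}$. These coincide with $X$ up to the exit time $\tau_\varepsilon$ of $D_\varepsilon$.

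We then need \emph{exponential tightness away from the boundary}:
\[
\lim_{\varepsilon\to0}\limsup_{\kappa\to0}\kappa\log P(\tau_\varepsilon\le T)=-\infty .
\]
To prove this, apply Itô's formula to $V(X)$:
\[
dV(X)=-|\nabla V|^2dt+\frac\kappa2\Delta V\,dt+\sqrt\kappa\,\nabla V\cdot dB .
\]
The repulsion makes $V$ essentially nonincreasing except for the term $\kappa\Delta V$. Near a collision, $\Delta V\approx\sum (x_i-x_j)^{-2}$, while $|\nabla V|^2$ is of the same order, so for small $\kappa$ the Itô correction is dominated. Indeed $-|\nabla V|^2+\frac{\kappa}{2}\Delta V\le C$ on $D$ for $\kappa\le 1$; this uses the $C^3$ smoothness (it is the step where $\beta\ge1$, that is $\kappa\le 2$, matters). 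Then
\[
\exp\!\big(\lambda V(X_t)/\kappa\big)
\]
is a supermartingale up to an $e^{Ct/\kappa}$ factor for suitable $\lambda$, which bounds $P(\tau_\varepsilon\le T)\le e^{(CT+\lambda V(x))/\kappa-\lambda/(\varepsilon\kappa)}$. This is the main obstacle: we must check that $\lambda|\nabla V|^2/\kappa$ dominates the quadratic-variation term $\lambda^2|\nabla V|^2/(2\kappa)$ (take $\lambda<2$ small) together with $\tfrac\lambda2\Delta V$. We must also show that $I(\phi)<\infty$ forces $\phi$ to stay in $D$, which follows by the same energy estimate applied to paths.

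\textbf{Assembling the result.} Given these estimates, a standard approximation argument transfers the LDP from $X^\varepsilon$ to $X$, in the sense of exponentially good approximations on each $[0,T]$. Goodness of $I$ follows from the Freidlin--Wentzell estimates on level sets together with the confinement. The contraction principle then gives the statement for $\boldsymbol{Z}$ with rate function $J$.
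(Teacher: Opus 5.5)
Your proposal is correct and shares the paper's overall structure. Both obtain an LDP for the time-changed parametrization process $X$, extend it to $\mathbb{R}_+$ by a projective-limit argument, and push it to the curve by the contraction principle through $g$, using the same pointwise identification of the integrand.

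Where you differ is the finite-horizon LDP for $X$.

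\begin{itemize}
\item \textbf{The paper's route.} It does not localize. It verifies that $V$ is a Dyson-type potential, $-B\le a\Delta V\le |\nabla V|^2+B$, with explicit constants $C_1,C_2,C_3$ coming from $\gamma\in C^3$, and then cites \cite[Theorem 1.8]{abuzaid2024large} on each $[0,T]$. To extend to $[0,\infty)$ it uses \cite[Theorem 4]{de1997exponential}. That forces two extra arguments: goodness of $I$ (via $I(f)\ge\mathcal{D}(f)-V(x)$), and the consistency identity $I_T(f^T)=\inf\{I(f):\pi_T f=f^T\}$ (by continuing paths with the gradient flow).
\item \textbf{Your route.} You truncate the drift on $D_\varepsilon=\{V<1/\varepsilon\}$ and apply classical Freidlin--Wentzell to the Lipschitz truncations. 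You control $P(\tau_\varepsilon\le T)$ with the stopped supermartingale $e^{\lambda V(X_t)/\kappa-\lambda Ct/\kappa}$ and pass to $X$ by exponentially good approximations. The path bound $\frac12\int_0^s|\dot\phi+\nabla V(\phi)|^2\,dt\ge 2\bigl(V(\phi(s))-V(x)\bigr)$ then identifies the limiting rate function and gives confinement. Dawson--G\"artner yields goodness of $\sup_T I_T\circ\pi_T$ without further work.
\item \textbf{Trade-off.} Your argument is self-contained and needs only the one-sided bound $\frac{\kappa}{2}\Delta V\le(1-\frac{\lambda}{2})|\nabla V|^2+C$. This is the paper's step $2^\circ$; the bound $\Delta V\ge -B$ is not needed. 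The paper's proof is shorter because the localization is outsourced to the cited result.
\end{itemize}

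One step you only assert heuristically needs to be done in full: that $|\nabla V|^2$ is ``of the same order'' as $\Delta V$. The difficulty is ruling out cancellations in $\sum_i\bigl(\sum_{j\ne i}\Re\{\gamma'(x_i)/(\gamma(x_i)-\gamma(x_j))\}\bigr)^2$ when three or more particles cluster. This requires:
\begin{itemize}
\item the real-line identity $\sum_i\bigl(\sum_{j\ne i}(x_i-x_j)^{-1}\bigr)^2=\sum_{i\ne j}(x_i-x_j)^{-2}$;
\item a uniform bound on the curve corrections to the three-body cross terms, which is the paper's constant $C_3$ and is where $C^3$ regularity is actually used.
\end{itemize}
Since $\kappa\to0$, any bound $\Delta V\le A|\nabla V|^2+C$ with finite $A$ suffices, so $\beta\ge1$ plays no role at this step, contrary to your parenthetical remark.

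Also, the contraction map should be taken on $C_x(\mathbb{R}_+,D)$, not $C_x(\mathbb{R}_+,\overline D)$, since paths touching $\partial D$ do not map into $S$.
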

The proof of Theorem~\ref{thm::LDP_Z} follows directly from the contraction principle, see \cite{dembo2009large}, given a large deviation principle  for the parametrization process. The main result  of \cite{abuzaid2024large} applies in our setting, and gives a large deviation principle for the parametrization process restricted to any compact time interval $[0,T]$. In Theorem~\ref{thm::extended_LDP} we extend the large deviation principle to the infinite time interval $[0,+\infty)$. More precisely, we have the following.
\begin{theorem}\label{thm:LDP_parametrization_process_infinite_time}
   Suppose $\gamma \in C^3$. The time-reparametrized parametrization process $X^{x}$, started at $x\in D$, satisfies a large deviation principle in $C_{x}(\mathbb{R}_{+}, D)$, with rate $1/\kappa$ and good rate function
    \[
        I(u) = \frac{1}{2}\int_{0}^{\infty}\sum\limits_{i=1}^{N}\left|\dot u_{i}(t) - \sum\limits_{j:j\neq i}\Re\left\{\frac{\gamma'(u_{i}(t))}{\gamma(u_{i}(t))-\gamma(u_{j}(t))}\right\}\right|^2dt,
    \]
for absolutely continuous $u = (u_{1}(t),\dots, u_{N}(t))_{t\ge 0}$ and set to $+\infty$ otherwise.
\end{theorem}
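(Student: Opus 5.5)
The plan is to obtain the large deviation principle on $C_x(\mathbb{R}_+, D)$ from the finite-horizon principles via the Dawson--Gärtner projective limit theorem (see \cite{dembo2009large}). For every $T>0$, the main result of \cite{abuzaid2024large} gives a large deviation principle for $(X^x(t))_{t\in[0,T]}$ in $C_x([0,T],D)$, with rate $1/\kappa$ and good rate function
\[
I_T(u)=\frac12\int_0^T |\dot u(t)+\nabla V(u(t))|^2\,dt .
\]
Two facts make this applicable. First, $\gamma\in C^3$ makes $\nabla V$ a $C^1$ (hence locally Lipschitz) field on $D$. Second, a direct computation gives
\[
-\partial_{x_i}V(x)=\sum_{j\neq i}\Re\left\{\frac{\gamma'(x_i)}{\gamma(x_i)-\gamma(x_j)}\right\},
\]
which identifies the integrand as the one in $I(u)$.

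The point of \cite{abuzaid2024large} is that it handles drifts singular at $\partial D$ which push the process inward. I would verify its hypotheses, essentially that $V$ blows up at $\partial D$ like $-\log(\text{distance})$. This holds because $|\gamma(x_i)-\gamma(x_j)|$ is comparable to the arc-length distance for a $C^1$ (chord-arc) Jordan curve. One also needs the non-explosion and non-collision supplied by Theorem~\ref{thm:existence-intro}.

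Next, identify $C_x(\mathbb{R}_+,D)$, with the locally uniform topology, with the projective limit of the spaces $C_x([0,T],D)$, $T\in\mathbb{N}$, under the restriction maps $p_T$. Dawson--Gärtner then yields a large deviation principle with good rate function
\[
\sup_T I_T(p_T u).
\]
Since the integrand is nonnegative, monotone convergence gives $\sup_T I_T(p_T u)=I(u)$. The same holds in the absolutely continuous versus $+\infty$ convention: $u$ is locally absolutely continuous exactly when each restriction is.

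The main obstacle I expect is making sure the finite-horizon principle is really stated on the open, non-complete state space $D$ with the correct goodness, so that the projective limit is legitimate. Concretely, $C_x([0,T],D)$ is not closed in $C([0,T],\overline D)$, so level sets of $I_T$ must be shown to avoid $\partial D$. My first attempt: for $u$ with $I_T(u)\le M$, compute
\[
\frac{d}{dt}V(u)=\nabla V\cdot\dot u=-|\nabla V|^2+\nabla V\cdot(\dot u+\nabla V).
\]
Bounding this by $\tfrac12|\dot u+\nabla V|^2$ gives $V(u(t))\le V(x)+M$, so $u$ stays in the compact sublevel set $\{V\le V(x)+M\}\subset D$. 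This gives goodness. If \cite{abuzaid2024large} only provides the principle in $C([0,T],\overline D)$, I would still conclude, because both the process and finite-rate paths live in $D$, so the principle restricts to the subspace.

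Finally, Theorem~\ref{thm::LDP_Z} follows by the contraction principle. The map $u\mapsto\gamma\circ u$ is continuous from $C_x(\mathbb{R}_+,D)$ to $C_{\boldsymbol z}(\mathbb{R}_+,S)$. It is injective, since $\gamma$ is bijective on a period and the lift is determined by continuity. Rewriting $\dot u_i=\Re(\dot w_i\overline{\tau(w_i)})$ then turns $I$ into $J$.
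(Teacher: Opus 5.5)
\textbf{Finite to infinite horizon: correct, and simpler than the paper's route.} Your passage from $[0,T]$ to $[0,\infty)$ via Dawson--G\"artner is sound. The paper instead uses de Acosta's projective-system theorem \cite[Theorem 4]{de1997exponential}. That theorem takes as \emph{hypotheses} that the candidate $I$ has compact level sets and that $I_T(f^T)=\inf\{I(f):\pi_T f=f^T\}$. The paper proves the first via the Dirichlet-energy bound $I(f)\ge\mathcal{D}(f)-V(x)$ plus lower semicontinuity of $I_T$. It proves the second by continuing $f^T$ after time $T$ along the zero-cost gradient flow $\dot u=-\nabla V(u)$, which needs a separate global-existence argument. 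In Dawson--G\"artner both facts come for free. The function $\sup_T I_T\circ p_T$ is automatically good, and the identity follows afterwards from the contraction principle and uniqueness of rate functions. So your monotone-convergence identification with $I$ is all that is needed. The paper's route only buys explicit extra information: a minimizing zero-cost continuation and an energy bound on level sets.

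\textbf{Gap: the hypothesis you would check for the finite-horizon LDP is the wrong one.} This input is the one substantive analytic step. The hypothesis of \cite[Theorem 1.8]{abuzaid2024large} is not a $-\log(\mathrm{distance})$ blow-up of $V$. It is the Dyson-type condition of Definition~\ref{def::Dyson-type-potential}: $-B\le a\Delta V\le|\nabla V|^2+B$ on all of $D$. Chord-arc comparability controls $V$ itself but says nothing about $\Delta V$ versus $|\nabla V|^2$. Their individual terms blow up like inverse squared gaps and must cancel.

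Set $w_{ij}=\gamma'(x_i)/(\gamma(x_i)-\gamma(x_j))$. Then $\Delta V=\sum_{i\neq j}\Re(w_{ij}^2)-\sum_{i\neq j}\Re\{\gamma''(x_i)/(\gamma(x_i)-\gamma(x_j))\}$ and $|\nabla V|^2=\sum_{i\neq j}(\Re w_{ij})^2+\sum_i\sum_{j\neq k;\,j,k\neq i}\Re w_{ij}\,\Re w_{ik}$.
\begin{itemize}
\item The lower bound requires $\Im w_{ij}$ and the symmetrized quotients $\Re\{(\gamma''(x_i)-\gamma''(x_j))/(\gamma(x_i)-\gamma(x_j))\}$ to be bounded.
\item The upper bound, with $a\le 1$, additionally requires the cyclic three-point sums $\Re w_{ij}\Re w_{ik}+\Re w_{ji}\Re w_{jk}+\Re w_{ki}\Re w_{kj}$ to be bounded. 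Each product blows up as points collide; on the line the cyclic sum vanishes identically.
\end{itemize}
This is exactly where $\gamma\in C^3$ is used. In your plan $C^3$ only makes $\nabla V$ locally Lipschitz, which does not give these bounds. You need to replace the ``log blow-up'' check by this computation, which is the content of the paper's proposition that $V$ is a Dyson-type potential.

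\textbf{Minor slip.} The set $\{V\le V(x)+M\}$ is not compact: $V$ is invariant under $x\mapsto x+l\hat{e}$ and $D$ is unbounded. To keep paths of rate at most $M$ on $[0,T]$ in a compact subset of $D$, you also need the energy bound $\int_0^T|\dot u|^2\,dt\le 4M+2T\sup_{\{V\le V(x)+M\}}|\nabla V|^2$. The supremum is finite by periodicity. In any case, goodness of $I_T$ is already supplied by \cite[Lemma 2.15]{abuzaid2024large}.
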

%\end{remark}
\begin{remark}
The large deviations result implies that
\[
        \inf\limits_{u\in C_{x}(\mathbb{R}_{+}, D)}I(u) = 0.
\]
In particular, $u = (u_{1}(t),\dots, u_{N}(t))_{t\ge 0}$ satisfies $I(u)=0$ if it is a solution to the gradient system 
\[
        \dot u(t)  =  -\nabla V(u(t)).
\]
%$V(x) = - \sum_{i<j}\log|\gamma(x_{i})-\gamma(x_{j})|$.
This gradient flow is related to the distribution of Fekete points on the curve. See Section~\ref{sect::discussion}.
\end{remark}
\subsubsection*{Hydrodynamical limit}
In order to state our next result, we need some additional notation.  Let ${\boldsymbol{Z}}=(Z_1,\ldots, Z_N)$ be Dyson Brownian motion on $\Gamma$ as in Theorem~\ref{thm:existence-intro}. For $t\ge 0$, denote by 
\[
    \mu_t^{(N)}(dz)=\frac{1}{N}\sum_{i=1}^N\delta_{Z_i(t/N)}(dz)
\]
the empirical probability measure on $\Gamma$ which can be viewed as a measure on $\Gamma \times \mathbb{R}_{+}$. We allow the initial condition $\boldsymbol{Z}(0) = (Z_{1}(0),\dots,Z_{N}(0))$ to be chosen randomly, so that $\mu_0^{(N)}$ is a random probability measure on $\Gamma$. We shall assume that there exist some limiting distribution $\mu_0$ such that $\mu_0^{(N)} \to \mu_0$ weakly, as the number of particles $N\to +\infty$. The most natural choice is perhaps to start the process $\boldsymbol{Z}$ according to the stationary Coulomb gas distribution, so that $\mu^{(N)}_{0}$ converges in the limit to the electrostatic equilibrium measure on $\Gamma$. Let $\tau : \Gamma \to \partial \mathbb{D}$ be the tangent map, that is, $\tau(z)$ is the unit tangent vector to $\Gamma$ at the point $z$, oriented counterclockwise. The following theorem shows that the system approaches a mean-field McKean-Vlasov equation depending on the curve, in the many-particle limit, $N \to +\infty$.
\begin{theorem}[Hydrodynamical limit]\label{thm:hydro}
Suppose $\gamma \in C^2$. Let $\mu_{0}$ be a probability measure on $\Gamma$ and suppose that 
$\mu_{0}^{(N)} \to \mu_{0}$ in the weak sense, as $N\to +\infty$. Then, the family $\{(\mu_{t}^{(N)})_{t\ge 0}\}_{N\ge 1}$ is tight, and any subsequential limit $\mu= (\mu_{t})_{t\ge0}$ satisfies
\begin{align*}%\label{eqn::hydro_SDE_1}
    d\mu_{t}(f) = \frac{\beta}{4}\int_\Gamma\int_\Gamma\left(\partial_s f(z)\Re\frac{\tau(z)}{z-w}-\partial_s f(w)\Re\frac{\tau(w)}{z-w}\right)\mu_t(dz)\mu_t(dw)dt,
\end{align*}
for all test functions $f$, smooth in a neighborhood of $\Gamma$.
\end{theorem}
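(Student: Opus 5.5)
We follow the classical martingale approach used for Dyson Brownian motion on the line and the circle (Rogers--Shi, Cépa--Lépingle). Fix a test function $f$ smooth near $\Gamma$, and write $F=f\circ\gamma$, which is $C^2$ and $l$-periodic. Then
\[
\mu^{(N)}_t(f)=\frac1N\sum_i F(X_i(t/N)).
\]
Apply Itô's formula to the parametrization process of Theorem~\ref{thm:existence-intro}. This is legitimate since $X$ is a strong solution staying in $D$ for $\beta\ge1$. The drift is
\[
\tfrac12\partial_i\log\rho_{\beta,N}=\frac{\beta}{2}\sum_{j\ne i}\Re\frac{\gamma'(x_i)}{\gamma(x_i)-\gamma(x_j)}.
\]
Writing $\partial_s f(z)=F'(x)$ and $\tau(z)=\gamma'(x)$ for $z=\gamma(x)$, and accounting for the time scaling $t/N$, we get
\[
\mu^{(N)}_t(f)-\mu^{(N)}_0(f)=\int_0^t\frac{\beta}{2N^2}\sum_{i\ne j}\partial_sf(Z_i)\Re\frac{\tau(Z_i)}{Z_i-Z_j}\,dr+\frac{1}{2N}\int_0^t\mu^{(N)}_r(F'')\,dr+M^N_t.
\]
Here $M^N$ is a martingale with quadratic variation $\frac{1}{N^3}\int_0^t\sum_i F'(X_i)^2dr\le \|F'\|_\infty^2t/N^2$. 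Symmetrizing the double sum over $(i,j)\leftrightarrow(j,i)$ gives
\[
\frac{\beta}{4N^2}\sum_{i\ne j}\Big(\partial_sf(Z_i)\Re\frac{\tau(Z_i)}{Z_i-Z_j}-\partial_sf(Z_j)\Re\frac{\tau(Z_j)}{Z_i-Z_j}\Big),
\]
which is $\frac{\beta}{4}\iint_{z\ne w}k_f(z,w)\,\mu_r^{(N)}(dz)\mu_r^{(N)}(dw)$, with $k_f$ the integrand of the theorem.

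The key step is that $k_f$ extends to a bounded continuous function on $\Gamma\times\Gamma$. Write $z=\gamma(s)$, $w=\gamma(u)$ and $h(s)=F'(s)\gamma'(s)$. Then
\[
k_f=\Re\frac{h(s)-h(u)}{\gamma(s)-\gamma(u)}.
\]
Since $\gamma\in C^2$ and $f$ is smooth, $h\in C^1$. Because $\gamma$ is a $C^1$ arc-length parametrization of a Jordan curve, it is bi-Lipschitz, i.e.\ $|\gamma(s)-\gamma(u)|\ge c|s-u|_{\mathbb{R}/l\mathbb{Z}}$. Hence $k_f$ is bounded. It is also continuous off the diagonal, with diagonal value
\[
\Re\big(h'(s)/\gamma'(s)\big)=\Re\big(F''(s)+F'(s)\gamma''(s)\overline{\gamma'(s)}\big)=F''(s),
\]
using $\Re(\gamma''\bar\gamma')=0$. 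So the diagonal terms omitted from the double integral contribute only $O(\|k_f\|_\infty/N)$, and together with the Itô correction this is $O(1/N)$. This gives
\[
\mu^{(N)}_t(f)-\mu^{(N)}_0(f)=\frac{\beta}{4}\int_0^t\!\!\iint k_f\,d\mu^{(N)}_rd\mu^{(N)}_r\,dr+O(t/N)+M^N_t,
\]
and Doob's inequality gives $\mathbb{E}\sup_{r\le T}|M^N_r|^2\le 4\|F'\|_\infty^2T/N^2$.

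For tightness, we work in $C(\mathbb{R}_+,\mathcal{P}(\Gamma))$. Since $\Gamma$ is compact, $\mathcal{P}(\Gamma)$ is compact, so it suffices to have tightness of $t\mapsto\mu^{(N)}_t(f)$ for $f$ in a countable dense family of smooth functions (Jakubowski's criterion). The display above gives a uniform Lipschitz bound on the drift part and a vanishing martingale part. Thus Aldous/Kolmogorov--Chentsov moduli of continuity are uniformly controlled, the family is tight, and the limit points are continuous.

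To identify the limit, pass to a Skorokhod subsequence $\mu^{(N)}\to\mu$ almost surely. The map $\nu\mapsto\iint k_f\,d\nu\,d\nu$ is continuous for the weak topology because $k_f$ is continuous and bounded on the compact $\Gamma^2$ and $\nu\otimes\nu$ depends continuously on $\nu$. Hence the integrated identity passes to the limit, with the error and martingale terms vanishing in probability, which yields the stated equation.

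The main obstacle is the continuity of $k_f$ on the diagonal. This is exactly where $\gamma\in C^2$ is used: it gives $h\in C^1$ and the identity $\Re(\gamma''\bar\gamma')=0$. The uniform bi-Lipschitz bound for the arc-length parametrization must also be checked carefully, including across the periodic wrap.
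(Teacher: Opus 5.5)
Your proposal is correct and follows essentially the same route as the paper: It\^o's formula applied to $f\circ\gamma$, symmetrization of the pair-interaction sum, showing the symmetrized kernel is bounded and continuous on $\Gamma\times\Gamma$ with diagonal value $\partial_s^2 f$, a Doob bound that kills the martingale, and a tightness-plus-identification argument in $C([0,T],\mathcal{P}(\Gamma))$. Your difference-quotient form $k_f=\Re\frac{h(s)-h(u)}{\gamma(s)-\gamma(u)}$, combined with the bi-Lipschitz bound, makes the boundedness of the kernel more transparent than the paper's pointwise limit computation, and your quadratic-variation bound $\|F'\|_\infty^2 t/N^2$ is sharper than the paper's $O(1/N)$ bound, though both suffice.
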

\noindent We prove Theorem~\ref{thm:hydro} in Section~\ref{sec::hydro}.

\subsection{Discussion}\label{sect::discussion}
\subsubsection*{Zabrodin's approach}
Let us briefly describe the main results of \cite{zabrodin2023dyson},  which inspired the present paper. Let $\gamma \in C^2$ with $\tau : \Gamma \to \partial \mathbb{D}$ the tangent map. Let $\nu = -i\tau  : \Gamma \to \partial D$ be the outward pointing unit normal map. Finally, $k : \Gamma \to \R$ is the curvature map of $\Gamma$, that is, $k(z)$ is the curvature of $\Gamma$ at the point $z$ defined as
\begin{align}\label{eqn: curvature}
k(z) = \frac{d \arg \tau(z)}{d |z|}.
\end{align}
Via these quantities Zabrodin \cite{zabrodin2023dyson} proposed to construct a process $\boldsymbol{Z}$ as a solution to the system of SDEs
\begin{align}\label{eq: SDE_Zabrodin}
    dZ_{i} = \sqrt{\kappa}\tau(Z_{i})dB_{i} + \left(\tau(Z_{i})(\partial_{s_{i}}E)(\boldsymbol{Z}) - \frac{\kappa}{2}\nu(Z_{i})k(Z_{i})\right)dt, \quad i=1,\ldots,N.
\end{align}
Here $B = (B_1, \ldots, B_N)$ is $N$-dimensional standard Brownian motion, and $E : \Gamma^N \to \R$ is the real-valued energy functional on the configuration $\boldsymbol{Z}$ of particles defined by
\begin{align}\label{eqn: E_functional}
E(\boldsymbol{z})= 2 \sum_{i < j} \log |z_i - z_j| + \sum_i W(z_i),
\end{align}
where $W : \mathcal{N}(\Gamma) \to \R$ a real-valued external potential defined in a neighborhood $\mathcal{N}(\Gamma)$ of the curve. The external potential does not appear in our setup; in that way Zabrodin's setup is slightly more general. The quantity $\partial_{s_i} E$ refers to the tangential derivative of the energy functional with respect to the $z_i$ variable. 

For $f : \mathcal{N}(\Gamma) \to \R$ defined in a neighborhood of the curve the tangential derivative is
\begin{align}\label{eq::tangential_derivative}
    (\partial_{s} f)(z) = \lim_{\substack{h \to 0 \\ h \in \R}} \frac{f(z + \tau(z) h) - f(z)}{h} = 2 \Re \left( \tau(z) \partial_{z} f(z) \right), \quad z \in \Gamma.
\end{align}
For a function $f(\boldsymbol{z})=f(z_{1},\dots,z_{N})$ on $\Gamma^N$, $\partial_{s_{i}}f$ refers to the tangential derivative in the $i^{th}$ variable, while $\partial_z f(z)$ refers to the holomorphic derivative.

For $E$ given by \eqref{eqn: E_functional} the tangential derivative is 
\[
(\partial_{s_i} E)(\boldsymbol{z}) = \Re \left\{ \sum_{j:j \neq i} \frac{\tau(z_i)}{z_i - z_j} \right\} + 2 \Re \left\{\tau(z_i) (\partial W)(z_i) \right\}.
\]
Given this setup, \cite{zabrodin2023dyson} proposes to define the \emph{Dyson diffusion process} $\boldsymbol{Z} = (Z_1, \ldots, Z_N)$ on a smooth Jordan curve $\Gamma$ as a solution to the system of SDEs \eqref{eq: SDE_Zabrodin}. Next if $\kappa = 2/\beta$, then, the function $u : \Gamma^N \to [0, \infty)$ defined by
\[
u(\boldsymbol{z}) = \prod_{i \neq j} |z_i - z_j|^{\beta} \prod_{i} e^{\beta W(z_i)}.
\]
is claimed to be a solution to the stationary Fokker-Planck-Kolmogorov equation $L^*u = 0$, where $L^*$ is the (formal) adjoint of the generator
\begin{align*}
    L = \sum\limits_{i=1}^{N}\left(\frac{\kappa}{2}\partial_{s_{i}}^{2} + (\partial_{s_{i}}E)\partial_{s_{i}}\right).
\end{align*}
No mathematical proofs of these statements were given in \cite{zabrodin2023dyson}.

Note that the notation of \cite{zabrodin2023dyson} differs from ours: Zabrodin's $\beta$-parameter equals $1/2$-times our $\beta$-parameter. We have chosen to use the convention of \cite{johansson2023coulomb, courteaut2025planar}, for which the determinantal model corresponds to $\beta = 2$.

Let us check that our construction matches the just described one. For the arc-length parameterization $\gamma$ that traces out the curve $\Gamma$ in a counterclockwise manner we recall the basic expressions of tangent vectors and curvature for plane curves: 
\[
\tau(\gamma(t)) = \gamma'(t), \quad -k(\gamma(t)) \nu(\gamma(t)) = \gamma''(t). 
\]
Itô's formula, which can be applied since we assume $\gamma \in C^2$, shows that for a real-valued process $X_i$ we have
\[
d \gamma(X_i(t)) = \gamma'(X_i(t)) dX_i(t) + \tfrac{1}{2} \gamma''(X_i(t)) d \langle X_i, X_i \rangle(t),
\]
and by the equations above this can be rewritten in terms of $Z_i(t) = \gamma(X_i(t))$ as
\[
d Z_i(t) = \tau(Z_i)  dX_i(t) - \tfrac{1}{2} k(Z_i) \nu(Z_i) d \langle X_i, X_i \rangle(t). 
\]
In order to match \eqref{eq: SDE_Zabrodin} we see that we should take
\[
dX_i(t) = \sqrt{\kappa} \, dB_i(t) + (\partial_{s_i} E)(\boldsymbol{Z}) \, dt.
\]
Up to a linear time change of the process, and with $W \equiv 0$, this choice in turn matches \eqref{def_X_process} since
\[
\partial_{x_i} \log \rho_{\beta,N}(X) = \beta \sum_{j : j \neq i} \Re \left( \frac{\gamma'(X_i)}{\gamma(X_i) - \gamma(X_j)} \right) = \beta \sum_{j : j \neq i} \Re \left( \frac{\tau(Z_i)}{Z_i - Z_j} \right).
\]
\subsubsection*{Gradient flow and Fekete points}
For $u:[0,+\infty)\to \mathbb{R}^{N}$, consider the gradient system 
\begin{equation}\label{eq::gradient_system_V}
\begin{cases}
    \dot u(t) = - \nabla V(u(t)), \quad t>0\\
    u(0) = u_{0}\in D,
\end{cases}
\end{equation}
where $V$ is as in \eqref{def:V}.
The potential $V(x)\to +\infty$ as $x\to \partial D$. A solution to the gradient system \eqref{eq::gradient_system_V} does not hit the boundary $\partial D$ since $t\to V(u(t))$ is a non-increasing function:
\[
    \frac{d}{dt}V(u(t)) = \nabla V(u(t))\cdot \frac{du(t)}{dt}  = -|\nabla V(u(t))|^2 \le 0.
\]
In particular, it stays inside the set $\{x\in D: V(x)\le V(u_{0})\}$. Local boundedness and $l$-periodicity of $V$ along $\hat{e} = (1,\dots, 1)^{T}$ imply that a solution does not blow up in finite time. Stationary solutions satisfy
\[
\nabla V(u(t)) \equiv 0, \qquad t \ge 0.
\]
A particularly interesting class of stationary solutions are the Fekete points. Recall that the $N$:th level Fekete points of $\Gamma$ is an $N$-tuple $\{z_{1}^{*},\dots , z_{N}^{*}\}$ of points that maximizes the discriminant, equivalently minimizes the logarithmic energy,
\[
\Delta_N(\Gamma) := \prod_{k\neq \ell} |z_k^*-z_\ell^*| =  \max_{z_1, \ldots, z_N \in \Gamma} \prod_{k\neq \ell} |z_k-z_\ell|.
\]
The simplest example is when $\Gamma=\mathbb{T} = \partial \mathbb{D}$. In this case any $N$-tuple of equidistant points form Fekete points, which are consequently not unique. Moreover, $\Delta_N(\mathbb{T})=N^N$. It is a classical result that
\[
\lim_{N \to \infty} \Delta_N(\Gamma)^{1/(N(N-1))} = \textrm{cap}(\Gamma),
\]
that is, the transfinite diameter equals the logarithmic capacity of $\Gamma$. It is known in a general setting (for compact sets with infinitely many points) that that the empirical measure of Fekete points converges to the electrostatic equilibrium measure, as $N\to+\infty$. See, e.g., \cite[p. 285]{Hille1973_AFT2} and \cite{Pommerenke1967Fekete} for further information.

\section*{Acknowledgements}
V.G.\ acknowledges support from the Göran Gustafsson Foundation for Research in Natural Sciences and Medicine. M.L.\  acknowledges support from the Knut and Alice Wallenberg Foundation. F.V.\ acknowledges support from the Knut and Alice Wallenberg Foundation, the Göran Gustafsson Foundation for Research in Natural Sciences and Medicine,  and the Simons Foundation. We thank Tom Alberts and Kurt Johansson for discussions and comments on an earlier version of the paper.

\section{Existence}\label{sec::existence}
The objective of this section is to establish existence of a process as in Definition~\ref{def_Dyson_BM_contour}. We show that in the regime $\beta\ge 1$ there exists a parametrization process, constructed as a unique strong solution to a stochastic differential equation, which after transplanting it to the curve $\Gamma$ with the arc-length parametrization yields a continuous strong Markov process. That is, we prove Theorem~\ref{thm:existence-intro}. However, the existence of Dyson Brownian motion in the regime $0<\beta<1$ remains an open problem.  

\subsection{Strong solution}
First, we establish existence and uniqueness of the parametrization process as a strong solution to the system of SDEs:
\begin{align}\label{eq::SDE_parametrization_process_X}
\begin{cases}
    dX(t) = dB(t)+b(X(t))dt, \ t\ge0,\\
    X(0)=x\in D.
\end{cases}
\end{align}
Here
\[
D = \{x\in\mathbb{R}^{N}: x_{1}<x_{2}<\dots<x_{N}<x_{1}+l\},
\]
 is an open, unbounded, convex domain, with smooth boundary except at intersections of the hyperplanes which form its boundary.
$B=(B_{1},\dots,B_{N})$ is $N$-dimensional standard Brownian motion. The drift term  \[b = \frac{1}{2}\nabla\log\rho_{\beta,N}\] is given by the logarithmic gradient of the density
\begin{align*}
    \rho_{\beta,N}(x) = \frac{1}{Z_{\beta, N}(\gamma)}\prod_{i\neq j}|\gamma(x_{i})-\gamma(x_{j})|^{\beta/2}, \quad \beta>0.
\end{align*}
The gradient is understood in the weak sense. The $i$:th component of the drift is given almost everywhere by 
\begin{align*}
b_{i}(x) = \frac{\beta}{2}\sum\limits_{j:j\neq i}\Re\left\{\frac{\gamma'(x_{i})}{\gamma(x_{i})-\gamma(x_{j})}\right\}.
\end{align*}
 For a general rectifiable Jordan curve $\Gamma$ the drift $b$ is a locally bounded measurable function in $ D$, and $|b(x)|\to+\infty$ as $x\to\partial D$. Indeed $|\gamma'| = 1$ where defined.

\subsubsection{Strong local solution}
Using results from \cite{krylov2005strong} we can construct a unique strong solution up to the time when the process reaches the boundary of the domain $ D$, that is, up to the first collision time of any two particles on the curve. Denote the exit time of the domain $D$ by 
\[
    \tau_{\partial D}(X)= \inf\{t\ge0:X(t)\notin  D\}.
\]
In order to define a process for all $t\ge0$,  we can employ Alexandrov compactification of the state space $ D$. Let $*\notin D$ be some point, define its neighborhoods to be complements in $ D$ of closed bounded subsets of $ D$. Then, $ D'= D\cup\{*\}$ is a compact topological space.

Let $(\Omega, \mathcal{F}, (\mathcal{F}_{t})_{t\ge s}, \mathbb{P})$ be a complete filtered probability space satisfying the usual conditions (the filtration is right-continuous and $\mathcal{F}_{0}$ contains all null sets). Let $B = (B(t))_{t\ge 0}$ be $N$-dimensional standard Brownian motion adapted to the filtration $(\mathcal{F}_{t})_{t\ge0}$. 
    
By \cite[Theorems 2.1, 2.5]{krylov2005strong}, for any $x\in D$ and $s\ge0$, there exists a continuous $ D'$-valued process $X\equiv X^{s,x}=(X^{s,x}(t))_{t\ge s}$, with $X^{s,x}(s)=x$, such that 
\begin{enumerate}
        \item $X$ is adapted to the filtration;
        \item For any $t\ge s$ on the event $\{t\ge \tau_{\partial D}(X) \}$ $$X(t)=* \text{ a.s.;}$$
        \item For any $t\ge s$ on the event $\{t<\tau_{\partial D}(X)\}$ 
        \begin{align*}
            \int_{s}^{t}|b(X(r))|^{2}dr<\infty\quad \text{ a.s.}
        \end{align*}
        and
        \begin{align*}
            dX(t) = dB(t)+b(X(t))dt\quad \text{ a.s.;}
        \end{align*}
        \item Pathwise uniqueness holds;
        \item The law of $X$ on $C([s,\infty),  D')$ is uniquely determined by the drift $b$;
        \item Let $\mathbb{P}_{s,x}$ be the probability distribution of $X^{s,x}$ on $C([s,\infty), D')$. Define the filtration  
        \[
            \mathcal{G}_{t} = \sigma\left\{(\omega(r))_{r\in [s,t]}: \omega\in C([s,\infty),  D') \right\}.
        \]
        The process
        \[
            \mathbb{M}_{s,x}= (C([s,\infty),  D'), (\mathcal{G}_{t})_{t\ge s}, (X(t))_{t\ge s}, \mathbb{P}_{s,x})
        \]
        is a strong Markov process;
        \item For any Borel bounded function $f$ on $ D'$ and $T>0$, the function 
        \[
            (s,x)\to\mathbb{E}\left[f(X^{s,x}(T-s))\right]
        \]
        is continuous on $[0,T]\times D$.
\end{enumerate}

\subsubsection{Strong global solution}
As stated above, there exists a unique strong solution to $dX(t)=dB(t)+b(X(t))dt,\  X(0)=x\in D$, defined up to the exit time of the  domain $D$. However, the drift has the property that $|b(x)| \to +\infty$ as $x\to\partial D$, which tends to keep the process away from the boundary. In the case of the real line and the circle, for any $\beta>0$ the process never leaves $\overline{D}$, and for $\beta\ge 1$ it never hits $\partial D$, see \cite{cepa1997diffusing, cepa2001brownian}. For a general rectifiable curve $\Gamma$ we do not know of a similar result that covers the whole range of $\beta>0$. There are nonetheless sufficient criteria, imposed on the density, ensuring that the process never reaches the boundary. In this regime, a local solution becomes a global one.

\begin{remark}
Already in \cite{krylov2005strong} a sufficient condition on the drift $b$ for the process to never hit the boundary $\partial D$ was given, in the case of gradient drift $b = - \nabla\psi$. It is sufficient that the potential function $\psi$ satisfies the inequality $\Delta\psi \le h e^{\varepsilon\psi}$ for some $\varepsilon\in [0,2)$ and a continuous function $h: D\to\mathbb{R}$ that, for all $\sigma>0$, is  $L^r(D, e^{-\sigma|x|^2}dx)$-integrable for some $r=r(\sigma)>1$. Applied to our setting, this criterion gives non-collision of particles almost surely if $\gamma\in C^{2}$ and $\beta>1$ and consequently a strong global solution under these assumptions, which would cover the case considered in \cite{zabrodin2023dyson}.
\end{remark}

Let $D'=D\cup\{*\}$ be the Alexandrov compactification of $D$. 
Let $x\in D$ and $X^{x}$ be a unique strong local solution with initial condition $X^{x}(0)=x$. Denote by $\mathbb{P}_{x}$ the distribution of $X^{x}$ on 
\[
    \tilde\Omega := \{\omega \in C(\mathbb{R}_{+}, D'): \omega(t) = * \text{ for } t\ge \zeta(\omega)\},
\]
where $\zeta(\omega) = \inf\{t\ge 0: \omega(t)\notin D\}$ is the lifetime of the process.
\[
    \mathbb{M} = (\tilde\Omega ,  \mathcal{G}, (\mathcal{G}_{t})_{t\ge 0},(X(t))_{t\ge 0},  (\mathbb{P}_{x})_{x\in D})
\]
is a strong Markov process. This is \cite[Theorems 2.1 and 2.5]{krylov2005strong}. 

We want to show that for all $x\in D$ the process $X^{x}$ does not hit the boundary $\partial D$ almost surely.  In other words, for
\[
    \tilde\Omega_{0} = \{\omega\in \tilde\Omega: \omega(0)\in D,\ \zeta(\omega)=\infty\}
\]
we want to prove that
\[
    \mathbb{P}_{x}[\tilde{\Omega}_{0}]=1 \text{ for all }x\in D.
\]
Following \cite[Section 4]{albeverio2003strong}, we can find a diffusion process, via the Dirichlet form theory, that posses this property, which then can be transferred to the strong local solution as explained below.
\subsubsection*{Dirichlet form}
Let $\mathcal{E}$ be a non-negative definite, symmetric, bilinear form defined by 
\[
    \mathcal{E}(f,g):=\frac{1}{2}\sum\limits_{i=1}^{N}\int_{D}\frac{\partial f}{\partial x_{i}}\frac{\partial g}{\partial x_{i}} \rho_{\beta,N}(x)dx,\qquad f,g\in C^{\infty}_{0}(D),
\]
where $C^{\infty}_{0}(D)$ denotes the space of compactly supported smooth function in $D$. $C^{\infty}_{0}(D)$ is a dense linear subspace of $L^{2}(D,\mu)$. 

The form $\mathcal{E}$ is closable if for any sequence  $(f_{n})$ in $ C^{\infty}_{0}(D)$ such that $\lim_{n\to\infty}||f_{n}||_{L^{2}(D,\mu)}$ and $\lim_{n,m\to\infty}\mathcal{E}(f_{n}-f_{m}, f_{n}-f_{m})=0$ it follows that $\lim_{n\to\infty}\mathcal{E}(f_{n},f_{n})=0$.
\begin{lemma}
    The bilinear form $(\mathcal{E}, C^{\infty}_{0}(D))$ closable on $L^{2}(D,\mu)$.
\end{lemma}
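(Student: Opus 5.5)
The plan is the standard argument for closability of a weighted gradient form when the weight is locally bounded above and below. Here $\mu$ is the measure $\rho_{\beta,N}(x)\,dx$ on $D$. The only property of $\rho_{\beta,N}$ I will use is that it is continuous and strictly positive on $D$. Continuity holds because $\gamma$ is $1$-Lipschitz, being the arc-length parametrization. Positivity holds because for $x\in D$ the numbers $x_1,\dots,x_N$ are pairwise distinct modulo $l$, and $\Gamma$ is a Jordan curve, so $\gamma(x_i)\neq\gamma(x_j)$ for $i\neq j$. Consequently, for every compact $K\subset D$ there are constants $0<c_K\le C_K<\infty$ with $c_K\le\rho_{\beta,N}\le C_K$ on $K$. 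In particular $1/\rho_{\beta,N}$ is locally bounded, which is essentially Hamza's condition.

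Let $(f_n)\subset C_0^\infty(D)$ satisfy $f_n\to 0$ in $L^2(D,\mu)$ and $\mathcal{E}(f_n-f_m,f_n-f_m)\to 0$. The second condition says that $\nabla f_n$ is Cauchy in $L^2(D,\mu;\mathbb{R}^N)$. By completeness, $\nabla f_n\to G$ in $L^2(D,\mu;\mathbb{R}^N)$ for some $G$. Since $\mathcal{E}(f_n,f_n)=\frac12\|\nabla f_n\|^2_{L^2(\mu)}\to\frac12\|G\|^2_{L^2(\mu)}$, it suffices to show that $G=0$ holds $\mu$-a.e., equivalently Lebesgue-a.e. on $D$, since $\rho_{\beta,N}>0$.

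Fix a vector field $\psi\in C_0^\infty(D;\mathbb{R}^N)$ with support in a compact set $K$. Because $\rho_{\beta,N}\ge c_K$ on $K$, the field $\psi/\rho_{\beta,N}$ is bounded and compactly supported, hence lies in $L^2(\mu)$. This gives
\[
\int_D G\cdot\psi\,dx=\int_D G\cdot\frac{\psi}{\rho_{\beta,N}}\,d\mu=\lim_{n\to\infty}\int_D\nabla f_n\cdot\psi\,dx=-\lim_{n\to\infty}\int_D f_n\,\nabla\!\cdot\psi\,dx .
\]
The last step is ordinary integration by parts against Lebesgue measure. This is the point of dividing by $\rho_{\beta,N}$: it avoids ever differentiating the weight, which is only Lipschitz and may be rough for a general rectifiable curve. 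By Cauchy--Schwarz,
\[
\Bigl|\int_D f_n\,\nabla\!\cdot\psi\,dx\Bigr|\le\|f_n\|_{L^2(\mu)}\Bigl(\int_K\frac{|\nabla\!\cdot\psi|^2}{\rho_{\beta,N}}\,dx\Bigr)^{1/2},
\]
which tends to $0$. Hence $\int_D G\cdot\psi\,dx=0$ for every such $\psi$. Note that $G\in L^1_{\mathrm{loc}}(D)$, because $G\in L^2(\mu)$ and $\rho_{\beta,N}$ is bounded below on compacts. Therefore $G=0$ a.e., and closability follows.

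The only step requiring care is the local two-sided bound on $\rho_{\beta,N}$, and specifically the strict positivity on $D$. That is where the Jordan property of $\Gamma$ and the definition of $D$ enter. Everything else is routine.
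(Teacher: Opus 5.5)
Your argument is correct and takes essentially the paper's approach. The paper just cites the standard derivation in Ma--R\"ockner (Section II.2.a), using only that $\rho_{\beta,N}$ is continuous and strictly positive on $D$, and you have written that derivation out: a local lower bound on $\rho_{\beta,N}$, testing against $\psi/\rho_{\beta,N}$, and integrating by parts against Lebesgue measure. You also justify positivity (via $0<x_j-x_i<l$ and injectivity of $\gamma$ on a period), which the paper takes for granted.
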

\begin{proof}
    Since $D= \{\rho_{\beta,N}>0\}$ and $x\to \rho_{\beta,N}(x)$ in continuous, the lemma follows from the derivation in \cite[Section II.2.a]{ma1992introduction}.
\end{proof}

For the form $\mathcal{E}$ to be closable is a necessary and sufficient condition to have a closed extension, see \cite[Chapter]{fukushima2011dirichlet}. Denote by $(\mathcal{E}, \mathcal{D}(\mathcal{E}))$ the closure of $(\mathcal{E}, C^{\infty}_{0}(D))$ on $L^{2}(D,\mu)$, i.e., $\mathcal{D}(\mathcal{E})$ is a Hilbert space with respect to the inner product $\mathcal{E}(f,g) + (f,g)_{L^{2}(D,\mu)}$.
\begin{lemma}
    $(\mathcal{E}, \mathcal{D}(\mathcal{E}))$ is symmetric, strongly local, regular Dirichlet form on $L^{2}(D,\mu)$.
\end{lemma}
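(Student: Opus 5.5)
We check the defining properties of a regular Dirichlet form one by one for the closure $(\mathcal{E},\mathcal{D}(\mathcal{E}))$ of $(\mathcal{E},C^\infty_0(D))$ on $L^2(D,\mu)$, with $\mu(dx)=\rho_{\beta,N}(x)\,dx$. We work in the framework of \cite{fukushima2011dirichlet} and \cite[Section II.2]{ma1992introduction}.

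\emph{Symmetry, bilinearity and non-negativity.} These are immediate on $C^\infty_0(D)$ from the definition of $\mathcal{E}$. Both $\mathcal{E}$ and the $\mathcal{E}_1$-inner product are defined by limits along $\mathcal{E}_1$-Cauchy sequences, so the properties pass to the closure.

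\emph{Markov property.} For a closable form it suffices to verify it on the core $C^\infty_0(D)$. For each $\varepsilon>0$ we pick a smooth normal contraction $\phi_\varepsilon:\mathbb{R}\to[-\varepsilon,1+\varepsilon]$ with $\phi_\varepsilon(t)=t$ on $[0,1]$, $0\le\phi_\varepsilon'\le 1$ and $\phi_\varepsilon(0)=0$. Then for $f\in C^\infty_0(D)$ we have $\phi_\varepsilon\circ f\in C^\infty_0(D)$. By the chain rule $\partial_i(\phi_\varepsilon\circ f)=\phi_\varepsilon'(f)\,\partial_i f$, and hence $\mathcal{E}(\phi_\varepsilon\circ f,\phi_\varepsilon\circ f)\le\mathcal{E}(f,f)$. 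The criterion in \cite[Theorem 3.1.1]{fukushima2011dirichlet} (or \cite[Prop. I.4.10]{ma1992introduction}) then shows that the closure is Markovian, i.e.\ a Dirichlet form.

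\emph{Regularity.} We need a core that is dense both in $C_c(D)$ with the sup-norm and in $\mathcal{D}(\mathcal{E})$ with the $\mathcal{E}_1$-norm. We take $C^\infty_0(D)$ itself. Density in $\mathcal{D}(\mathcal{E})$ holds by construction of the closure. Density in $C_c(D)$ follows from Stone--Weierstrass, or simply from mollification: a compactly supported continuous function on the open set $D$ is a uniform limit of mollifications supported in a fixed compact subset of $D$. One must also check that $\mu$ is a positive Radon measure on $D$ with full support. This holds because $\rho_{\beta,N}$ is continuous and strictly positive on $D$, since $\gamma$ is injective modulo $l$ and the points $\gamma(x_i)$ are distinct for $x\in D$.

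\emph{Strong locality.} Take $f,g\in\mathcal{D}(\mathcal{E})$ with compact supports such that $g$ is constant on a neighbourhood of $\operatorname{supp} f$. We need $\mathcal{E}(f,g)=0$. For smooth functions this is clear, because $\nabla g=0$ on the support of $\nabla f$. The main obstacle is extending this to the closure. Here we would use the standard characterisation: since $\rho_{\beta,N}$ is locally bounded above and below on $D$, elements of $\mathcal{D}(\mathcal{E})$ are locally $H^1$ functions with weak gradient in $L^2(\rho\,dx)$, and $\mathcal{E}(f,g)=\frac12\int\nabla f\cdot\nabla g\,\rho\,dx$ holds on the closure. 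We identify the limit via local $H^1$ convergence on compacts, using the closability argument from the previous lemma. Then $\nabla g=0$ a.e.\ on the open set where $g$ is constant, and the integral vanishes. Alternatively, one can cite \cite[Section II.2.a]{ma1992introduction}, where forms of type $\int a_{ij}\partial_i f\partial_j g\,d\mu$ are shown to be strongly local regular Dirichlet forms under exactly these continuity and positivity hypotheses.
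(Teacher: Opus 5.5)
Your argument is correct, but it takes a different route from the paper, which gives no argument of its own and simply refers to \cite[Theorem 4.1.4]{baur2014elliptic} and \cite[Proposition 4.2]{baur2013construction}. Those works treat weighted gradient forms $\int\nabla f\cdot\nabla g\,\rho\,dx$ in a more general setting.

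You instead check each axiom by hand, using only that $\rho_{\beta,N}$ is continuous and strictly positive on the open set $D$:
\begin{itemize}
\item Markovianity on the pre-domain $C^\infty_0(D)$, via smooth normal contractions and \cite[Theorem 3.1.1]{fukushima2011dirichlet}.
\item Regularity, with $C^\infty_0(D)$ itself as core, since $\mu$ is Radon with full support on $D$.
\item Strong locality. Two-sided local bounds on $\rho_{\beta,N}$ make the $\mathcal{E}_1$-norm locally equivalent to the $H^1$-norm. Hence the closure lies in $H^1_{\mathrm{loc}}(D)$, with $\mathcal{E}(f,g)=\frac12\int_D\nabla f\cdot\nabla g\,\rho_{\beta,N}\,dx$. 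Then $\nabla f\cdot\nabla g=0$ a.e.\ once $g$ is constant near $\operatorname{supp} f$, because $\nabla f=0$ a.e.\ off $\operatorname{supp} f$.
\end{itemize}

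The paper's citation route is shorter, but it leaves the reader to match the cited framework to the present form on $C^\infty_0(D)$. Your route is self-contained and makes explicit that nothing beyond continuity and positivity of the density is needed. That is exactly what is available for a merely rectifiable curve.
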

\begin{proof}
    See the proof of \cite[Theorem 4.1.4]{baur2014elliptic} or
    \cite[Proposition 4.2]{baur2013construction} and references therein.
\end{proof}

\subsubsection*{Capacity}
The capacity of a set with respect to the form $(\mathcal{E}, \mathcal{D}(\mathcal{E}))$ is defined, for an open set $A\subset D$, by
\[
    \text{Cap}_{\mathcal{E}}(A):= \inf\{\mathcal{E}(f,f)+(f,f)_{L^{2}}: f\in\mathcal{D}(\mathcal{E}),\ f\ge 1 \ \mu\text{-a.e.}\},
\]
and, for any set $B\subset D$, by
\[
    \text{Cap}_{\mathcal{E}}(B):= \inf\{\text{Cap}_{\mathcal{E}}(A): A \text{ is open and }  B\subset A\}.
\]
\begin{proposition}\label{prop:capacity_zero}
    For $\beta \ge 1$, $\text{Cap}_{\mathcal{E}}(\{\rho_{\beta,N}(x)=0\})=0$.
\end{proposition}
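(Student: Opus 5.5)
The set $\{\rho_{\beta,N}=0\}$ is $\partial D$ (it sits outside $D$; as in \cite[Section 4]{albeverio2003strong} we view the form on $\overline{D}$, or equivalently on $\mathbb{R}^N$ with $\rho$ extended). The plan is to show that $\partial D$ has zero capacity. We will construct cutoff functions $f_\varepsilon\in\mathcal{D}(\mathcal{E})$ with $f_\varepsilon\ge 1$ on a neighbourhood of $\partial D$ and $\mathcal{E}(f_\varepsilon,f_\varepsilon)+\|f_\varepsilon\|^2_{L^2(\mu)}\to 0$. Since $\rho_{\beta,N}$ is $l$-periodic along $\hat e=(1,\dots,1)$, the set $D$ is unbounded in that direction. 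Capacity is countably subadditive, so it suffices to handle the bounded pieces $\partial D\cap\{|x\cdot\hat e|\le R\}$, one for each $R$. On each piece we also multiply by a fixed smooth cutoff in the $\hat e$ direction, which only contributes bounded factors.

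The key quantitative input is a lower-order estimate for $\rho$ near $\partial D$. The boundary consists of the faces $\{x_i=x_{i+1}\}$ and $\{x_N=x_1+l\}$. For a rectifiable Jordan curve parametrized by arc length, $|\gamma(x_i)-\gamma(x_j)|\le |x_i-x_j|$. Hence
\[
\rho_{\beta,N}(x)\le C\prod_{i}d_i(x)^{\beta}, \qquad d_i(x)=x_{i+1}-x_i \ \ (d_N=x_1+l-x_N),
\]
where we used that the other factors are bounded on bounded sets. This upper bound is all we need, because we only require $\mathcal{E}(f_\varepsilon,f_\varepsilon)$ to be small. Now take $f_\varepsilon(x)=\max_i \phi_\varepsilon(d_i(x))$ (or a smooth version) with a logarithmic cutoff
\[
\phi_\varepsilon(r)=\begin{cases}1, & r\le \varepsilon^2,\\ \log(\varepsilon/r)/\log(1/\varepsilon), & \varepsilon^2\le r\le\varepsilon,\\ 0,& r\ge\varepsilon,\end{cases}
\]
which has $|\phi'_\varepsilon(r)|=1/(r\log(1/\varepsilon))$. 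Since $f_\varepsilon\le\sum_i\phi_\varepsilon(d_i)$, it suffices to estimate a single face. Locally near that face, $d_i$ is a transverse coordinate and $\rho\lesssim d_i^{\beta}$. This gives
\[
\int |\nabla \phi_\varepsilon(d_i)|^2\rho\,dx\lesssim \frac{1}{\log^2(1/\varepsilon)}\int_{\varepsilon^2}^{\varepsilon} r^{\beta-2}\,dr .
\]
For $\beta>1$ this is $O(\varepsilon^{\beta-1}/\log^2)\to0$. For $\beta=1$ it equals $\log(1/\varepsilon)/\log^2(1/\varepsilon)\to0$; this is precisely why $\beta\ge1$ is the threshold. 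The $L^2(\mu)$ norm is bounded by the $\mu$-measure of an $\varepsilon$-neighbourhood of the face, which is $O(\varepsilon^{1+\beta})$.

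The main obstacle is membership: we must show that $f_\varepsilon$ (which is $1$ near $\partial D$, not compactly supported in $D$) lies in the relevant domain. We must also justify that the capacity in the statement is computed for the form on $\overline D$ (or $\mathbb{R}^N$) with reference measure $\mu=\rho\,dx$, as in \cite{albeverio2003strong}. There the form is closed from $C_0^\infty(\mathbb{R}^N)$ (respectively Lipschitz functions with bounded support), and Lipschitz compactly supported functions belong to the domain by mollification, using that $\rho$ is continuous and locally bounded. Our $f_\varepsilon$, after multiplying by the $\hat e$-cutoff, is Lipschitz with bounded support, so it qualifies. A secondary technical point is the corners, where several faces meet. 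Using the sum $\sum_i\phi_\varepsilon(d_i)$ avoids any issue there, since each term is estimated separately over its whole support, and only the bound $\rho\lesssim d_i^\beta$ is used.
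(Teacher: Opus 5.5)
Your proposal is correct and follows essentially the paper's route. The paper bounds $\rho_{\beta,N}\le C\prod_{i<j}h(|x_i-x_j|)$ with $h(r)=\min(r,l-r)^{\beta}$, notes that $\int_{0+}h^{-1}=\int^{l-}h^{-1}=+\infty$ exactly when $\beta\ge 1$, and then imports the cutoff construction from \cite[Proposition 2.1]{inukai2006collision}; your logarithmic cutoff $\phi_\varepsilon$, with energy $\lesssim \log^{-2}(1/\varepsilon)\int_{\varepsilon^2}^{\varepsilon}r^{\beta-2}\,dr\to 0$, is that construction done by hand for power-type $h$. You also make explicit two points the paper leaves implicit: the countable-subadditivity reduction along the unbounded $\hat e$-direction, and that the capacity must be taken for the form on $\overline D$ or $\mathbb{R}^N$ as in \cite{albeverio2003strong}, since $\{\rho_{\beta,N}=0\}\cap D=\emptyset$ would make the statement vacuous for the $C_0^\infty(D)$-closure.
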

\begin{proof}
The density $\rho_{\beta,N}(x)$ is bounded by 
\[
    \rho_{\beta,N}(x) = \frac{1}{Z_{\beta,N}(\gamma)}\prod\limits_{i\neq j}|\gamma(x_{i})-\gamma(x_{j})|^{\beta/2} 
    \le  \frac{1}{Z_{\beta,N}(\gamma)}\prod\limits_{i<j}h(|x_{i}-x_{j}|),
\]
where $h(r) := \min(r, l-r)^{\beta}$, $r\in [0,l]$. The function $h$ satisfies the following integrability conditions: for any $\beta\ge 1$,
\[
    \int_{0+}\frac{1}{h(r)}dr=+\infty\quad  \text{ and }\quad \int^{l-}\frac{1}{h(r)}dr=+\infty.
\]
This integrability condition on the density forces the capacity (with respect to the Dirichlet form) of the set $\{\rho_{\beta,N}=0\}$ to be zero. The derivation of this claim follows, with minimal changes, the proof of \cite[Proposition 2.1]{inukai2006collision}.
\end{proof}

Recall 
\[
\begin{split}
    &\tilde\Omega := \{\omega \in C(\mathbb{R}_{+}, D'): \omega(t) = * \text{ for } t\ge \zeta(\omega)\},\\
    &\tilde\Omega_{0} := \{\omega\in \tilde\Omega: \omega(0)\in D,\ \zeta(\omega)=\infty\},
\end{split}
\]
where $\zeta(\omega) = \inf\{t\ge 0: \omega(t)\notin D\}$ is the lifetime of the process.
The theory of regular Dirichlet forms provides an existence of a diffusion process, associated with $(\mathcal{E}, \mathcal{D}(\mathcal{E}))$,
\[
    \mathbb{M}_{x} = \{\tilde\Omega, \tilde{\mathcal{G}},(\tilde{\mathcal{G}}_{t})_{t\ge 0}, (X(t))_{t\ge0}, \tilde{\mathbb{P}}_{x}\}
\]
on $D'$ for quasi-every starting point $x\in D$, i.e., points in $D$ outside of an exceptional set. The exceptional set is fairly small --- it has zero capacity with respect to the Dirichlet form --- but is usually not explicit.  See \cite{fukushima2011dirichlet, ma1992introduction} for more on Dirichlet form techniques.
\begin{remark}
    The theory of regular Dirichlet forms provides distributions $\mathbb{P}_{x}$ for starting points $x\in D$ outside of an exceptional set. The exceptional set is fairly small --- it has zero capacity with respect to the Dirichlet form (see below) --- but is usually not explicit. However, the measure $\mu(dx)=\rho_{\beta,N}(x)dx$ does not charge sets of zero capacity. This allows the process associated with $(\mathcal{E}, \mathcal{D}(\mathcal{E}))$ to start from a random point 
\end{remark}
For $\mu(dx) = \rho_{\beta,N}(x)dx$, define 
\[
    \mathbb{P}_{\mu}:= \int_{D}\mathbb{P}_{x}\ \mu(dx) \text{ and }  \tilde{\mathbb{P}}_{\mu} := \int_{D}\tilde{\mathbb{P}}_{x}\ \mu(dx).
\]
By Proposition~\ref{prop:capacity_zero} and \cite{ma1992introduction},
\[
    \tilde{\mathbb{P}}_{\mu}[\tilde\Omega_{0}]=1.
\]
\begin{lemma}
\[
        \tilde{\mathbb{P}}_{\mu} = \mathbb{P}_{\mu}.
\]
\end{lemma}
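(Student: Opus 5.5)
We prove $\tilde{\mathbb{P}}_{\mu} = \mathbb{P}_{\mu}$ by showing that, under $\tilde{\mathbb{P}}_{\mu}$, the coordinate process solves the same SDE as $X^x$ up to its lifetime. Uniqueness in law of local solutions (item (5.) from \cite{krylov2005strong}) then identifies the two measures. This follows the strategy of \cite[Section 4]{albeverio2003strong}.

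\textbf{Step 1: Fukushima decomposition.} For quasi-every $x$ we decompose the coordinate functions under $\tilde{\mathbb{P}}_x$. The functions $u_i(x) = x_i$ are not in $\mathcal{D}(\mathcal{E})$ globally, so we localize. Take an exhaustion of $D$ by relatively compact open sets $D_k$ and cutoffs $\chi_k\in C_0^\infty(D)$ with $\chi_k = 1$ on $D_k$. Then $x_i\chi_k \in \mathcal{D}(\mathcal{E})$, and the Fukushima decomposition gives
\[
x_i\chi_k(X(t)) - x_i\chi_k(X(0)) = M^{[k]}_i(t) + N^{[k]}_i(t).
\]
\begin{itemize}
\item The martingale $M^{[k]}_i$ has energy measure $\mu_{\langle x_i\chi_k\rangle} = |\nabla(x_i\chi_k)|^2\rho_{\beta,N}\,dx$. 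Hence, before exiting $D_k$, the quadratic covariation is $\langle M_i,M_j\rangle_t = \delta_{ij}t$.
\item The zero-energy part $N^{[k]}_i$ corresponds to the signed measure $-\mathcal{E}(x_i\chi_k,\cdot)$. On $D_k$ this measure is
\[
\tfrac12\,\partial_i\rho_{\beta,N}\,dx = b_i\rho_{\beta,N}\,dx,
\]
obtained by integration by parts with the weak gradient of $\rho_{\beta,N}$.
\item Since $b$ is locally bounded on $D$, this measure is absolutely continuous with locally bounded density. It is therefore smooth, and $N^{[k]}_i(t) = \int_0^t b_i(X(r))\,dr$ up to the exit time $\tau_{D_k}$.
\end{itemize}

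\textbf{Step 2: the coordinate process is a local solution.} By Lévy's characterization, $B := M$ is a Brownian motion up to $\tau_{D_k}$. Letting $k\to\infty$, under $\tilde{\mathbb{P}}_x$ for q.e. $x$ the process satisfies
\[
dX = dB + b(X)\,dt \quad\text{on } [0,\zeta),
\]
and $X = *$ after $\zeta$. So $(\tilde\Omega,\tilde{\mathbb{P}}_x)$ carries a weak solution of the local SDE, enlarging the probability space if necessary to define $B$ after $\zeta$.

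\textbf{Step 3: identification.} Item (5.) states that the law on $C(\mathbb{R}_+,D')$ of the killed solution is uniquely determined by $b$, via pathwise uniqueness and Yamada--Watanabe. This gives $\tilde{\mathbb{P}}_x = \mathbb{P}_x$ for all $x$ outside the exceptional set $\mathcal{N}$. Since $\mu$ does not charge sets of zero capacity, $\mu(\mathcal{N}) = 0$. Integrating against $\mu$ then yields $\tilde{\mathbb{P}}_\mu = \mathbb{P}_\mu$.

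\textbf{Main obstacle.} The hardest part is Step 1, namely identifying the drift part $N$ as $\int_0^t b(X)\,dr$ when $b$ is only measurable and locally bounded (the curve is merely rectifiable). Two points need care. First, the integration by parts $\mathcal{E}(x_i\chi_k,g) = -\int b_i g\,\rho\,dx$ for $g\in C_0^\infty(D_k)$ requires $\rho_{\beta,N}$ to be locally Lipschitz with the stated a.e. gradient. This holds because $\gamma$ is $1$-Lipschitz and $\rho_{\beta,N}$ is bounded below on compacts of $D$. Second, we need the correspondence between smooth measures and additive functionals of zero energy, as in \cite[Theorem 5.2.2, 5.1.3]{fukushima2011dirichlet}. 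We expect both to follow as in \cite{albeverio2003strong}.
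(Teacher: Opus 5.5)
Your proof is correct, but it takes a different route from the paper's.

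The paper never identifies the laws for individual starting points. Its input is that the transition semigroup $\tilde P_t$ of the Dirichlet-form process is a $\mu$-version of the semigroup $P_t$ of the Krylov--R\"ockner solution $\mathbb{M}$. It then uses the Markov property to write the cylinder probabilities of both $\tilde{\mathbb{P}}_\mu$ and $\mathbb{P}_\mu$ as the same nested integral $\int_D \mathds{1}_{A_0}P_{s_1}(\mathds{1}_{A_1}\cdots P_{s_n-s_{n-1}}\mathds{1}_{A_n})\,d\mu$.

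You instead apply the Fukushima decomposition to the localized coordinates $x_i\chi_k$ and use the Revuz correspondence. This shows that under $\tilde{\mathbb{P}}_x$, for quasi-every $x$, the coordinate process is a weak solution of the SDE up to its lifetime. Uniqueness in law for the killed equation then gives $\tilde{\mathbb{P}}_x=\mathbb{P}_x$ q.e., and you integrate against $\mu$ at the end.

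The paper's argument is shorter, but the semigroup identification it starts from is the nontrivial point. That $\tilde{\mathbb{M}}$ is associated with $(\mathcal{E},\mathcal{D}(\mathcal{E}))$ says nothing by itself about $\mathbb{M}$, and the paper takes the link between the two from the cited references.

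Your argument needs more machinery:
\begin{enumerate}
\item local Lipschitz regularity of $\rho_{\beta,N}$ for the integration by parts, which holds because $\gamma$ is $1$-Lipschitz and injective on a period;
\item a countable union of exceptional sets over $i$ and $k$;
\item a Yamada--Watanabe argument for solutions defined up to a lifetime.
\end{enumerate}
In return, it actually proves that link. It also yields the stronger statement $\tilde{\mathbb{P}}_x=\mathbb{P}_x$ for quasi-every $x$. This gives $\mathbb{P}_x[\tilde\Omega\setminus\tilde\Omega_0]=0$ for $\mu$-a.e.\ $x$ directly, which is what the next lemma needs.
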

\begin{proof}
The proof follows \cite[Lemma 4.2]{albeverio2003strong}. Let $(P_{t})_{t\ge0}$ be the transition semigroup of $\mathbb{M}$. Since $\tilde{\mathbb{M}}$ is associated to the Dirichlet form $(\mathcal{E}, \mathcal{D}(\mathcal{E}))$, its transition semigroup $(\tilde P_{t})_{t\ge 0}$ is related to $(P_{t})_{t\ge0}$ by: $\tilde P_{t}f$ is a $\mu$-version of $P_{t}f$ for $f\in L^{2}(D,\mu)$, see \cite[Remark 2.5 (ii)]{ma1992introduction}.

Let $A_{i}$, $i=0,\dots,n$, be Borel-measurable set in $D$. Let $0<s_{1}<\dots <s_{n}$.
By Markov property, cf. \cite[Lemma 4.1.2]{fukushima2011dirichlet}, we have
\begin{align*}
    &\tilde{\mathbb{P}}_{\mu}[X(0)\in A_{0},X(s_{1})\in A_{1}, \dots X(s_{n})\in A_{n}]\\
    &= \int_{D}\tilde{\mathbb{P}}_{x}[X(0)\in A_{0},X(s_{1})\in A_{1}, \dots X(s_{n})\in A_{n}]\mu(dx)\\
    &=\int_{D}\mathds{1}_{A_{0}}(x) (\tilde P_{s_{1}}\mathds{1}_{A_{1}}\dots \mathds{1}_{A_{n-1}}( \tilde P_{s_{n}-s_{n-1}}\mathds{1}_{A_{n}}))(x)\mu(dx)\\
    &=\int_{D}\mathds{1}_{A_{0}}(x) ( P_{s_{1}}\mathds{1}_{A_{1}}\dots \mathds{1}_{A_{n-1}}(  P_{s_{n}-s_{n-1}}\mathds{1}_{A_{n}}))(x)\mu(dx)\\
    &={\mathbb{P}}_{\mu}[X(0)\in A_{0},X(s_{1})\in A_{1}, \dots X(s_{n})\in A_{n}].\qedhere
\end{align*}
\end{proof}
\begin{lemma}
    $\mathbb{P}_{x}[\tilde\Omega_{0}]=1$ for all $x\in D = \{\rho>0\}$.
\end{lemma}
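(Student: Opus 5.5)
We upgrade the $\mu$-almost-everywhere statement $\mathbb{P}_{\mu}[\tilde\Omega_0]=1$ to every starting point $x\in D$. The tool is the Markov property of the strong local solution together with continuity of the law in the starting point, following the argument of \cite[Section 4]{albeverio2003strong}.

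\textbf{Step 1: almost every starting point.} From $\tilde{\mathbb{P}}_{\mu}[\tilde\Omega_0]=1$ and the preceding lemma we get $\mathbb{P}_{\mu}[\tilde\Omega_0]=1$. Since $\mathbb{P}_\mu=\int_D \mathbb{P}_x\,\mu(dx)$, it follows that $\mathbb{P}_x[\tilde\Omega_0]=1$ for $\mu$-a.e.\ $x$. Because $\rho_{\beta,N}>0$ on $D$, the measure $\mu$ is equivalent to Lebesgue measure there, so the exceptional set $N_0\subset D$ is Lebesgue-null.

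\textbf{Step 2: reduce to a shifted event.} Fix an arbitrary $x\in D$ and $s>0$. Let $\theta_s$ be the shift operator. The event $\{\zeta=\infty\}$ is contained in $\{\zeta>s\}$, and on $\{\zeta>s\}$ it coincides with $\{\zeta\circ\theta_s=\infty\}$. The Markov property of $\mathbb{M}$ then gives
\[
\mathbb{P}_x[\zeta=\infty]=\mathbb{E}_x\big[\mathds{1}_{\{\zeta>s\}}\,\mathbb{P}_{X(s)}[\zeta=\infty]\big].
\]
Suppose we know that the law of $X(s)$ under $\mathbb{P}_x$, restricted to $\{\zeta>s\}$, is absolutely continuous with respect to Lebesgue measure on $D$. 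Then $X(s)\notin N_0$ almost surely on that event, so the right-hand side equals $\mathbb{P}_x[\zeta>s]$. Letting $s\downarrow 0$ gives $\mathbb{P}_x[\zeta>s]\to 1$, because paths are continuous and $x$ lies in the open set $D$. Hence $\mathbb{P}_x[\tilde\Omega_0]=1$.

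\textbf{Step 3: absolute continuity of the killed transition function (main obstacle).} The drift $b$ is only measurable and locally bounded, so the law of $X(s)\mathds{1}_{\{\zeta>s\}}$ cannot be read off from a smooth density. I would first localize. Take an exhaustion of $D$ by relatively compact open sets $D_k$, and let $\tau_k$ be the exit time from $D_k$. On each $D_k$ the drift is bounded, so Girsanov's theorem compares the stopped process $X(\cdot\wedge\tau_k)$ with Brownian motion started at $x$, with an integrable density. This gives
\[
\mathbb{P}_x[X(s)\in A,\ s<\tau_k]=0
\]
for every Lebesgue-null $A$. Letting $k\to\infty$, and using $\tau_k\uparrow\zeta$, yields the required absolute continuity.

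As a backup, property (7.) of the strong local solution states that $x\mapsto\mathbb{E}[f(X^{x}(s))]$ is continuous for bounded Borel $f$. Combined with Step 1, this continuity would alternatively give the strong Feller property needed for the Markov argument in Step 2.
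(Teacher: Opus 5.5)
Your proposal is correct, and its skeleton matches the paper's. You get the $\mu$-a.e.\ statement from $\mathbb{P}_\mu$, then use the Markov property at a time $s>0$ together with absolute continuity of the law of $X^x(s)$ to reach every $x\in D$.

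Two things differ.

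First, the paper gets absolute continuity by a forward reference to Proposition~\ref{prop::FPK_X_process}, i.e.\ from the FPK density theorem of Lemma~\ref{lem::existence_of_density}. That route gives much more: a positive, locally H\"older continuous density, which is used later for the FPK equation and ergodicity. But it is formulated for the already global process. Your Girsanov localization on relatively compact $D_k$ uses only local boundedness of $b$ and applies directly to the killed local solution, so the lemma stays self-contained.

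Second, you keep track of the killed mass. The paper's chain passes from $\mathbb{E}[\mathbb{P}_{X^x(s)}(\tilde\Omega\setminus\tilde\Omega_0)]$ to $\int_D \mathbb{P}_y(\tilde\Omega\setminus\tilde\Omega_0)\,P(x,s,dy)$. This silently drops the paths with $X^x(s)=*$, for which $\mathbb{P}_*(\tilde\Omega\setminus\tilde\Omega_0)=1$. Done correctly, that computation only yields $\mathbb{P}_x[\zeta<\infty]=\mathbb{P}_x[\zeta\le s]$, and your final limit $s\downarrow 0$ is exactly what is needed to conclude.

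Your backup is in fact a complete and shorter proof on its own. Set $u(y):=\mathbb{P}_y[\zeta=\infty]$ with $u(*)=0$. By the Markov property $u(x)=\mathbb{E}_x[u(X(s))]$, so $u$ is continuous on $D$ by property (7.). A continuous function equal to $1$ Lebesgue-a.e.\ on the open set $D$ equals $1$ everywhere. This needs neither absolute continuity nor the limit $s\downarrow 0$.
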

\begin{proof}
Since ${\mathbb{P}}_{\mu}[\tilde\Omega\setminus \tilde\Omega_{0}] = \int_{D}{\mathbb{P}}_{x}[\tilde\Omega\setminus \tilde\Omega_{0}] \mu(dx)=0$ and $\mu(dx)=\rho_{\beta,N}(x)dx$ with $\rho_{\beta,N}>0$ on $D$, it holds that ${\mathbb{P}}_{x}[\tilde\Omega\setminus \tilde\Omega_{0}]=0$ almost everywhere on $D$. We show that it holds everywhere.

Let $\theta_{s}:\tilde\Omega\to \tilde\Omega$ denote the translation operator $\theta_{s}(\omega) := (\omega(t+s))_{t\ge0}$. Then, $\theta^{-1}_{s}\tilde\Omega=\tilde\Omega$ and $\theta^{-1}_{s}\tilde\Omega_{0}=\tilde\Omega_{0}$. Fix $x\in D$. By the telescopic property of conditional expectation, 
\[
\begin{split}
    \mathbb{P}_{x}(\tilde\Omega \setminus \tilde\Omega_{0})
    &=  \mathbb{P}_{x}(\theta^{-1}_{s}(\tilde\Omega \setminus \tilde\Omega_{0}))\\
    &=\int_{\tilde\Omega}\mathds{1}_{\{\tilde\Omega \setminus \tilde\Omega_{0}\}}(\theta_{s}\omega)\mathbb{P}_{x}(d\omega)\\
    &= \int_{\tilde\Omega}\mathbb{E}_{x}\left(\mathds{1}_{\{\tilde\Omega \setminus \tilde\Omega_{0}\}}(\theta_{s}\tilde\omega)\Big|\mathcal{G}_{s}\right)(\omega)\mathbb{P}_{x}(d\omega).
\end{split}
\]
By the Markov property,
\[
    \mathbb{E}_{x}\left(\mathds{1}_{\{\tilde\Omega \setminus \tilde\Omega_{0}\}}(\theta_{s}\tilde\omega)\Big|\mathcal{G}_{s}\right)(\omega) 
    = \mathbb{P}_{\omega(s)}\left(\tilde\Omega\setminus\tilde\Omega_{0}\right),
\]
where $\omega(0)=x$. The transition probability function of $X^{x}$ is absolutely continuous with respect to the Lebesgue measure; we prove this fact in Proposition~\ref{prop::FPK_X_process}. This leads to the identity
\[
\begin{split}
    \mathbb{P}_{x}(\tilde\Omega \setminus \tilde\Omega_{0}) 
    &= \int_{\tilde\Omega}\mathbb{P}_{\omega(s)}(\tilde\Omega\setminus\tilde\Omega_{0})\mathbb{P}_{x}(d\omega)\\
    &=\mathbb{E}\left[\mathbb{P}_{X^{x}(s)}(\tilde\Omega\setminus\tilde\Omega_{0})\right]\\
    &= \int_{D}\mathbb{P}_{y}(\tilde\Omega\setminus\tilde\Omega_{0})P(x,s,dy)\\
    & = \int_{D}\mathbb{P}_{y}(\tilde\Omega\setminus\tilde\Omega_{0})p(x,s,y)dy\\
    & = 0.
\end{split}
\]
Since $x\in D$ was an arbitrary fix point, it follows that $\mathbb{P}_{x}[\tilde\Omega_{0}]=1$ for every $x\in D$.
\end{proof}
\noindent This completes the proof of existence of a unique strong solution to the SDE
\[
    dX(t) = dB(t) + \frac{1}{2}\nabla \log\rho_{\beta,N}(X(t))dt,
\]
for any initial point $x\in D$, in the regime $\beta\ge 1$.

\subsection{Back to the curve}\label{sec::back_to_the_curve}
Let $(\Omega, \mathcal{F}, (\mathcal{F}_{t})_{t\ge 0}, \mathbb{P})$ be a complete filtered probability space on which the parametrization process  $X^{s,x}=(X^{s,x}(t))_{t\ge s}$ is defined, with the starting point $X^{s,x}_{s}=x\in D$. Denote by $\mathcal{B}_{D}$ the Borel $\sigma$-algebra on $D$. $X^{s,x}$ is a continuous  $D$-valued time-homogeneous strong Markov process with the transition probability functions
\begin{align*}
    P(s,x,t,B) = \mathbb{P}\left[X^{s,x}(t)\in B\right],\  t\ge s,\ B\in\mathcal{B}_{D},        
\end{align*}
which are determined by 
\begin{align*}
    P(x,t,B) = P(0, x,t,B), \ t\ge 0, \ B\in\mathcal{B}_{D}.
\end{align*}
Denote by $X^{x} =X^{0,x}$. Then, $(X^{x}, \mathbb{P})$ is a time-homogeneous strong Markov process in the state space $(D, \mathcal{B}_{D})$, where 
\[
    D = \left\{x\in\mathbb{R}^{N}:x_{1}<\dots<x_{N}<x_{1}+l\right\},
\]
with the transition probability function $P(x,t,B)$.
\subsubsection*{Quotient parametrization process}
The parametrization process is defined on an unbounded domain $D$. In this section, we redefine the process on a compact "cylinder". Although this middle step is not required to transplant the process to the curve, we will make use of this construction in Section~\ref{sec::ergodicity}.

Let $f:D\to \mathbb{R}^{N}$ be a mapping defined by 
\begin{equation}\label{eq::function_f_D_to_E}
    f(x) = x - (x_{1} - (|x_{1}| \text{ mod } l))\hat{e},
\end{equation}
where $\hat{e} = (1,\dots, 1)^{T}$. The image of $D$ under $f$ is
\[
    E  = f(D) = \{x\in\mathbb{R}^{N}: 0\le x_{1} < \dots< x_{N}< x_{1}+l<2l\}.
\]
One can identify the points on two parts of the boundary $\partial E\cap \{x_{1}=0\}$ and $\partial E\cap \{x_{1}=l\}$ by
\[
    \begin{pmatrix}
    0 \\
    x_{2} \\
    \dots\\
    x_{N}
    \end{pmatrix}
    \sim
    \begin{pmatrix}
    l \\
    x_{2}+l \\
    \dots\\
    x_{N}+l
    \end{pmatrix},
\]
making the space $E$, equipped with the quotient topology, into a ``cylinder''. Then, $f:D\to E$ is a continuous surjective mapping.

For $k\in\mathbb{Z}$, the sets $\{E + kl\hat{e}\} = \{x+kl\hat{e}: x\in E\}$ form a decomposition of $D$:
\[
    D = \bigcup\limits_{k=-\infty}^{\infty}\{E + kl\hat{e}\}.
\]
For a subset $V\subset E$, we have
\[
    f^{-1}(V) = \bigcup\limits_{k=-\infty}^{\infty}\{V + kl\hat{e}\}.
\]

Strong uniqueness of the parametrization process implies that the paths of $(X^{x}+kl\hat{e})_{t\ge 0}$ and $(X^{x+kl\hat{e}})_{t\ge 0}$ coincide almost surely. In particular, for $V\in \mathcal{B}_{E}$
\[
    \mathbb{P}\left[X^{x+kl\hat{e}}(t)\in f^{-1}(V)\right] 
    =\mathbb{P}\left[X^{x}(t)\in \{f^{-1}(V)-kl\hat{e}\}\right].
\]
Due to decomposition property above the preimage $f^{-1}(V)$ is invariant under translations along the vector $l\hat{e}$, that is, $f^{-1}(V)-kl\hat{e} = f^{-1}(V)$ as sets. We obtain the following relation for the transition probability function of the process $X$: For all $V\in \mathcal{B}_{E}$, and $x,\tilde x\in \overline D$ such that $f(x) = f(\tilde x)$
\begin{equation}\label{eq::transition_function_Dynkin}
P(x,t,f^{-1}(V)) 
    = P(\tilde x,t,f^{-1}(V)).    
\end{equation}
\begin{definition}[Quotient parametrization process] A quotient parametrization process started at $x\in E$ is a process in $E$ given by 
\[
    \tilde{X}^{x}(t) = f(X^{x}(t)),
\]    
where $f$ is as in \eqref{eq::function_f_D_to_E} and $X^{x}$ is a parametrization process started at $x\in E\subset D$.
\end{definition}

By \cite[Theorem 10.13]{dynkin1965markovI}, one can construct a filtered probability space $(\Omega, (\tilde{\mathcal{F}}_{t})_{t\ge 0}, \tilde{\mathbb{P}})$ such that $(\tilde{X}^{x},\tilde{\mathbb{P}})$ is a strong Markov-Feller process in the state space $(E, \mathcal{B}_{E})$, started at $x\in E$, with the transition probability function given by 
\[
    \tilde P(x, t, V) = P(x,t,f^{-1}(V)) = \sum\limits_{k=-\infty}^{\infty} P(x,t,V + kl\hat{e}).
\]
\subsubsection*{The process on the curve}
Let $\boldsymbol{z} = (z_{1},\dots,z_{N})$ denote a point in $\mathbb{C}^{N}$. Dyson Brownian motion on the curve is the process $\boldsymbol{Z} = (Z_{1}(t), \dots, Z_{N}(t))_{t\ge 0}$, given by 
\[
    Z_{i}(t) = \gamma(\tilde{X}_{i}^{x}(t)),
\]
in the state space
\[
S = \{(z_{1},\dots,z_{N})\in \Gamma^{N}: z_{i} = \gamma(x_{i}),\quad  x\in E\}.
\]
Denote by $g:E\to \Gamma^{N}$ the mapping 
\[
    g(x) = (\gamma(x_{1}), \dots, \gamma(x_{N})).    
\]
If $S = g(E)$, then $g:E\to S$ is a continuous bijective mapping. By \cite[Theorem 10.13]{dynkin1965markovI}, one can construct a filtered probability space $(\Omega, (\mathcal{G}_{t})_{t\ge 0}, \mathbb{Q})$ such that $(\boldsymbol{Z},\mathbb{Q})$ is a strong Markov-Feller process in the state space $(S, \mathcal{B}_{S})$, started at $\boldsymbol{z} = f(x)\in S$, with the transition probability function given by 
\[
    Q(\boldsymbol{z}, t, \mathcal{\mathcal{V}}) = \tilde{P}(x, t, g^{-1}(\mathcal{V}))=P(x,t,f^{-1}(g^{-1}(\mathcal{V}))), \quad \mathcal{V}\in \mathcal{B}_{S}.
\]
This completes the proof of Theorem~\ref{thm:existence-intro}.

\section{Transition probability functions}
In this section we consider the parametrization process for Dyson Brownian motion and study its transition probability function.
\subsection{Fokker-Planck-Kolmogorov equation}\label{sec::FPK_and_ergodicity}
Let $\Omega\subset\mathbb{R}^{N}$ be a domain, $b=(b_{1},\dots,b_{N})$ be a Borel vector field on $\Omega$, $A = (a_{ij})$ be matrix-valued mapping on $\Omega$ such that $a_{ij}$ are Borel measurable, $a_{ij}=a_{ji}$, and $A\ge0$. Define $L_{A,b}$ as the differential operator that acts on $\varphi\in C_{0}^{\infty}((0,T)\times\Omega)$ by
\begin{align*}
    L_{A,b}\varphi = \sum\limits_{i,j=1}^{N}a_{ij}\frac{\partial^2\varphi}{\partial x_{i}\partial x_{j}} + \sum\limits_{i=1}^{N}b_{i}\frac{\partial\varphi}{\partial x_{i}},
\end{align*}
and denote by $L^*_{A,b}$ its formal adjoint.
\begin{definition}
(a) A locally finite Borel measure $\mu$ on the domain $(0,T)\times\Omega\subset \mathbb{R}^{N+1}$ satisfies Fokker-Planck-Kolmogorov equation $\partial_{t}\mu = L^{*}_{A,b}\mu$
if $a_{ij}, b_{i}\in L^{1}_{loc}(|\mu|)$ and
\begin{align*}
    \int_{(0,T)\times\Omega}(\partial_{t}\varphi+L_{A,b}\varphi)d\mu
    =0 
    \quad \forall\varphi\in C_{0}^{\infty}((0,T)\times\Omega).
\end{align*}
(b) A locally finite Borel measure $\mu$ on a domain $\Omega\subset \mathbb{R}^{N}$ satisfies the stationary Fokker-Planck-Kolmogorov equation  $L^{*}_{A,b}\mu = 0$
if $a_{ij}, b_{i}\in L^{1}_{loc}(|\mu|)$ and
\begin{align*}
    \int_{\Omega}(L_{A,b}\varphi)(x) \mu(dx)=0 \quad \forall\varphi\in C_{0}^{\infty}(\Omega).
\end{align*}
\end{definition} 

In the rest, we choose $A = \frac{1}{2}I$ and denote by $L = L_{\frac{1}{2}I,b}$, where $b = \frac{1}{2}\nabla \log\rho$.  The differential operator $L$ is given by 
\begin{align*}
    Lf = \frac{1}{2}\Delta f + b\cdot \nabla f, \qquad f\in C_{0}^{\infty}(\Omega),
\end{align*}
and $L^{*}$ denotes its formal adjoint
\begin{align*}
    L^{*}f = \frac{1}{2}\Delta f- \nabla\cdot (b f).
\end{align*}

In the special case $d\mu = \mu_{t}(dy)dt$, one can consider a Cauchy problem.
\begin{definition}[Cauchy problem]
Let $\nu$ be a locally bounded measure on a domain $\Omega$. A measure $d\mu = \mu_{t}(dy)dt$ solves a Cauchy problem with the initial condition $\mu_{0}= \nu$ if 
\begin{equation}\label{eq::weak_FPK_measure}
    \int_{0}^{T}\int_{\Omega}(\partial_{t}\varphi+ L\varphi)\mu_{t}(dy)dt 
    = 0 \quad \forall\varphi\in C^{\infty}_{0}((0,T)\times \Omega)
\end{equation}
and
\begin{equation}\label{eq::initial_cond_measure}
    \int_{\Omega}\varphi(x)\nu(dx)
    =\lim\limits_{t\to 0+, t\in I_{\varphi}}\int_{\Omega}\varphi(x)\mu_{t}(dx) 
    \quad\forall\varphi\in C^{\infty}_{0}(\Omega),
\end{equation}
where the set $I_{\varphi}\subset (0,T)$ has full measure $|I_{\varphi}|=T$
\end{definition}
\begin{remark}
In general, $I_{\varphi}$ depends on the test function. However, when $t\to\int_{\Omega}\varphi d\mu_{t}$ is continuous for all test functions, which is the case when $\mu_{t}(dy)$ is a transition probability of a Feller process, one can take $I_{\varphi} = (0,T)$.

\end{remark}
\begin{remark}
An equivalent definition of the Cauchy problem (\ref{eq::weak_FPK_measure}), (\ref{eq::initial_cond_measure}) is
\[
    \int_{\Omega}\varphi d\mu_{t} 
    = \int_{\Omega}\varphi d\nu + \int_{0}^{t}\int_{\Omega}L\varphi d\mu_{s}ds
    \quad\forall \varphi\in C^{\infty}_{0}(\Omega),
\]
which for the initial distribution $\nu = \delta_{x}$, i.e., the point mass at $x\in \Omega$, becomes
\begin{equation}\label{eq::Cauchy_probem_x}
    \int_{\Omega}\varphi d\mu_{t} 
    = \varphi(x) + \int_{0}^{t}\int_{\Omega}L\varphi d\mu_{s}ds
    \quad\forall \varphi\in C^{\infty}_{0}(\Omega).
\end{equation}    
\end{remark}
\begin{lemma}
If $\rho\in W^{1,1}_{loc}(\Omega)$, then the measure $\mu(dx) = \rho(x) dx$ satisfies the stationary Fokker-Planck-Kolmogorov equation $L^{*}\mu=0$ with the drift $ b = \frac{1}{2}\nabla\log\rho$, where the gradient is understood in the weak sense and $b(x)=0$ whenever $\rho(x)=0$.
\end{lemma}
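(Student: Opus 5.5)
The plan is to verify the weak formulation directly. We must show two things: that $b_i\in L^1_{loc}(\mu)$, and that for every $\varphi\in C_0^\infty(\Omega)$,
\[
\int_\Omega \Bigl(\tfrac12\Delta\varphi + b\cdot\nabla\varphi\Bigr)\rho\,dx = 0 .
\]
The key observation is that, with the convention $b=0$ on $\{\rho=0\}$, we have the pointwise identity
\[
b\rho = \tfrac12\nabla\rho \quad \text{a.e. on } \Omega .
\]
On $\{\rho>0\}$ this holds because $\nabla\log\rho=\nabla\rho/\rho$ there. On $\{\rho=0\}$ both sides vanish a.e., since for a Sobolev function $\nabla\rho=0$ a.e. on any level set (Stampacchia's theorem for $W^{1,1}_{loc}$).

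Given this identity, integrability is immediate:
\[
\int_K |b|\,\rho\,dx = \tfrac12\int_K|\nabla\rho|\,dx<\infty
\]
for every compact $K\subset\Omega$, since $\rho\in W^{1,1}_{loc}$. The coefficient $a_{ij}=\tfrac12\delta_{ij}$ is trivially in $L^1_{loc}(\mu)$, because $\rho\in L^1_{loc}$.

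Next, the definition of the weak derivative, applied with the test functions $\partial_i\varphi\in C_0^\infty(\Omega)$, gives
\[
\int \rho\,\Delta\varphi\,dx = -\int \nabla\rho\cdot\nabla\varphi\,dx .
\]
Hence
\[
\int\Bigl(\tfrac12\Delta\varphi+b\cdot\nabla\varphi\Bigr)\rho\,dx = -\tfrac12\int\nabla\rho\cdot\nabla\varphi\,dx+\tfrac12\int\nabla\rho\cdot\nabla\varphi\,dx=0 .
\]

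The only delicate point is the meaning of the "weak gradient of $\log\rho$" near the zero set, where $\log\rho$ need not be locally integrable. I would handle this by taking $b$ to be \emph{defined} as $\nabla\rho/(2\rho)$ on $\{\rho>0\}$ and $0$ elsewhere. On any open set where $\rho$ is locally bounded below, the chain rule for $W^{1,1}_{loc}$ composed with $\log$ (Lipschitz on the range) shows this agrees with the weak gradient of $\tfrac12\log\rho$. The main care is therefore in applying the Stampacchia-type fact $\nabla\rho=0$ a.e. on $\{\rho=0\}$, which is what makes the zero-set convention consistent with the identity $b\rho=\tfrac12\nabla\rho$. In our application $\rho=\rho_{\beta,N}$ is positive on $D$, so this issue only affects $\partial D$.
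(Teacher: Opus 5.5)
Your proof is correct and follows the paper's argument. The a.e.\ identity $b\rho=\tfrac12\nabla\rho$ gives $b_i\in L^1_{loc}(\mu)$, and the definition of the weak derivative then makes the two terms of $\int(\tfrac12\Delta\varphi+b\cdot\nabla\varphi)\rho\,dx$ cancel; your appeal to $\nabla\rho=0$ a.e.\ on $\{\rho=0\}$ makes explicit a step the paper uses silently when it replaces $\int b\cdot\nabla\varphi\,\rho\,dx$ by $\tfrac12\int\nabla\rho\cdot\nabla\varphi\,dx$.
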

\begin{proof}
    Indeed, since  $\rho\in W_{loc}^{1,1}(\Omega)$, for any compact subset $K\subset\Omega$  we have $$\int_{K} |b(x)| |\mu|(dx) = \frac{1}{2}\int_{K} |\nabla\rho|dx<\infty,$$
that is, $b^{i}\in L^{1}_{loc}(|\mu|)$ for all $i=1,\dots,N$. By integration by parts, for all $\varphi\in C_{0}^{\infty}(\Omega)$,  
\[
    \int_{\Omega}\left(\frac{1}{2} \Delta \varphi+b \cdot \nabla\varphi\right)\rho \ dx 
    =-\frac{1}{2}\int_{\Omega}\nabla\varphi\cdot \nabla\rho\ dx 
    + \int_{\Omega} \frac{\nabla\rho}{2\rho}\cdot \nabla\varphi \rho \ dx = 0. \qedhere
\]
\end{proof} 
\begin{corollary}
For any rectifiable Jordan curve $\Gamma$, the measure $\mu(dx)=\rho_{\beta,N}(x)dx$ on any domain $\Omega\subseteq D$, with the density 
\[
    \rho_{\beta,N}(x) = \frac{1}{Z_{\beta, N}(\gamma)}\prod\limits_{i\neq j}|\gamma(x_{i})-\gamma(x_{j})|^{\beta/2},
\]
is a solution to the stationary Fokker-Planck-Kolmogorov equation $L^{*}\mu=0$ with the drift 
\[
b_{i}(x) 
= \frac{1}{2\rho_{\beta,N}}\frac{\partial\rho_{\beta,N}}{\partial x_{i}}(x) 
= \frac{\beta}{2}\sum\limits_{j:j\neq i}\Re\left\{\frac{\gamma'(x_{i})}{\gamma(x_{i})-\gamma(x_{j})}\right\}, 
\]
where the derivative of the parametrization is understood in the weak sense since it exists almost everywhere.
\end{corollary}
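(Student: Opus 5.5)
The plan is to deduce the corollary from the preceding lemma, applied with $\Omega\subseteq D$ and $\rho=\rho_{\beta,N}$. Two things need checking: that $\rho_{\beta,N}\in W^{1,1}_{loc}(\Omega)$, and that the weak logarithmic gradient $\frac12\nabla\log\rho_{\beta,N}$ is given a.e. by the displayed formula. Once both hold, the lemma gives $L^*\mu=0$ for $\mu(dx)=\rho_{\beta,N}(x)\,dx$. The convention $b=0$ on $\{\rho=0\}$ is vacuous here, because $\rho_{\beta,N}>0$ on $D$.

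\emph{Step 1 (regularity of the factors).} The arc-length parametrization is $1$-Lipschitz, since $|\gamma(s)-\gamma(t)|\le |s-t|$. By Rademacher's theorem $\gamma'$ exists a.e. with $|\gamma'|=1$, and $\gamma$ is absolutely continuous. Fix a compact $K\subset\Omega\subseteq D$. For $x\in D$ and $i\ne j$ we have $x_i\not\equiv x_j \pmod l$, so $\gamma(x_i)\neq\gamma(x_j)$ because $\Gamma$ is Jordan. By continuity and compactness there is a $\delta>0$ with $|\gamma(x_i)-\gamma(x_j)|\ge\delta$ on $K$ for all $i\ne j$. Each map $x\mapsto \gamma(x_i)-\gamma(x_j)$ is Lipschitz on $\mathbb{R}^N$ with values in $\{|w|\ge\delta\}$ on $K$. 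The map $w\mapsto|w|^{\beta/2}$ is smooth, hence Lipschitz, on $\{\delta\le|w|\le \operatorname{diam}\Gamma\}$. Therefore each factor $|\gamma(x_i)-\gamma(x_j)|^{\beta/2}$ is Lipschitz on a neighbourhood of $K$, and so is the finite product $\rho_{\beta,N}$. Locally Lipschitz functions belong to $W^{1,\infty}_{loc}\subset W^{1,1}_{loc}$, and their weak gradient coincides a.e. with the pointwise gradient.

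\emph{Step 2 (computing the gradient).} The chain rule for a Lipschitz function composed with a smooth one holds a.e. For fixed $x_j$, the function $s\mapsto \log|\gamma(s)-\gamma(x_j)|$ is differentiable wherever $\gamma'(s)$ exists, with derivative
\[
\Re\frac{\gamma'(s)}{\gamma(s)-\gamma(x_j)}.
\]
This follows from $\partial_s\log|w(s)|=\Re\bigl(w'(s)/w(s)\bigr)$. In the product, $x_i$ appears in both factors $(i,j)$ and $(j,i)$, so $\partial_{x_i}\log\rho_{\beta,N}=\beta\sum_{j\ne i}\Re\{\gamma'(x_i)/(\gamma(x_i)-\gamma(x_j))\}$ a.e., and halving gives $b_i$. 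One detail needs care: the partial derivative in $x_i$ exists off the set $\{x:x_i\in \mathcal N\}$, where $\mathcal N$ is the null set on which $\gamma'$ fails to exist. This set is Lebesgue-null in $\mathbb{R}^N$ by Fubini, so the formula holds a.e. Since $\rho_{\beta,N}$ is Lipschitz, its a.e. partial derivatives agree with its weak derivatives.

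\emph{Step 3.} Apply the lemma. The main obstacle, though a mild one, is Step 1. One must make sure the Lipschitz bound is uniform on compacts, which rests on the Jordan property giving $\delta>0$. One must also make sure the weak derivative equals the a.e. classical one, and this is why I would go through Lipschitz continuity rather than differentiate the product directly.
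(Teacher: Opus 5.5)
Your proof is correct and takes the paper's route. The paper states this corollary as an immediate consequence of the preceding lemma, with no separate proof, and you supply the two checks it leaves implicit: that $\rho_{\beta,N}\in W^{1,1}_{loc}(\Omega)$ via local Lipschitz continuity (using the Jordan property to keep $|\gamma(x_i)-\gamma(x_j)|$ away from zero on compacts), and the a.e.\ computation showing the weak drift $\frac{1}{2}\nabla\log\rho_{\beta,N}$ equals the displayed formula, including the factor $2$ from the pairs $(i,j),(j,i)$ and the Fubini argument for the null set where $\gamma'$ fails to exist.
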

\begin{remark}
Note that in general for gradient drifts there could exist infinitely many solutions to the stationary Fokker-Planck-Kolmogorov equation even in the class of probability measures. See an example in \cite[p.790]{bogachev2010invariant}. In Proposition~\ref{prop:unique_station_FPK_smooth_case} of the appendix we show that, at least when the density $\rho$ is smooth in $D$, any positive solution to the stationary Fokker-Planck-Kolmogorov equation must equal $\rho$ times a constant. 
\end{remark}
\subsection{Parametrization process}
\subsubsection*{Fokker-Planck-Kolmogorov equation}
In the following proposition we show that the transition probability function of the parametrization process has a density and satisfies the Fokker-Planck-Kolmogorov equation.
\begin{proposition}\label{prop::FPK_X_process}
The transition probability function $P(x,t,dy)$ of the parametrization process $X^{x}$ solves the Cauchy problem \eqref{eq::Cauchy_probem_x} with $\rho=\rho_{\beta, N}$. Moreover, the transition probability measure is absolutely continuous with respect to the Lebesgue measure 
\begin{align*}
    P(x,t,dy) = p(x,t,y)dy,
\end{align*}
$(t,y)\to p(x,t,y)$ is positive and locally Hölder continuous on $(0,+\infty)\times D$, and the density satisfies the Fokker-Planck-Kolmogorov equation with reflecting boundary conditions
\begin{align*}
    &\partial_{t}p(x,t,\bullet)=L^{*}p(x,t,\bullet) \text{ weakly in }D,\\
    &n\cdot\left(-\frac{1}{2}\rho_{\beta, N}\nabla \frac{p}{\rho_{\beta, N}} \right) = 0 \text{ weakly on }\partial D.
\end{align*}
% In addition, the ratio
% \[
%    (t,x)\mapsto \sup_{y\in D}\frac{p(x,t,y)}{\rho_{\beta, N}(y)}
% \]
% is locally bounded on $(0,+\infty)\times D$.
\end{proposition}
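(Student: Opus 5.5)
First I would obtain the Cauchy problem from It\^o's formula. Fix $\varphi\in C_0^\infty(D)$ and $x\in D$. Since the strong solution $X^x$ of \eqref{eq::SDE_parametrization_process_X} never leaves $D$ (the global existence argument of Section~\ref{sec::existence}), It\^o's formula applies to $\varphi(X^x(t))$:
\[
\varphi(X^x(t))=\varphi(x)+\int_0^t\nabla\varphi(X^x(s))\cdot dB(s)+\int_0^t L\varphi(X^x(s))\,ds .
\]
The drift $b$ is only locally bounded on $D$. However, $L\varphi$ is bounded, because $\operatorname{supp}\varphi$ is a compact subset of $D$ and $b$ is bounded there. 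So the stochastic integral is a true martingale. Taking expectations and applying Fubini gives \eqref{eq::Cauchy_probem_x} with $\mu_t=P(x,t,\cdot)$. Feller continuity (property (7) of the local solution) lets us take $I_\varphi=(0,T)$.

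Second, I would prove absolute continuity, positivity and H\"older regularity using the parabolic theory for FPK equations with locally bounded coefficients, as in Bogachev--Krylov--R\"ockner--Shaposhnikov. The measure $\mu_t(dy)dt$ solves $\partial_t\mu=L^*\mu$ on $(0,T)\times D$, with identity diffusion matrix and $b\in L^\infty_{loc}(D)$, hence $b\in L^{p}_{loc}$ for every $p>N+2$. Their regularity theorem then gives a density $p(x,t,y)$ that lies locally in the parabolic Sobolev class $\mathbb{H}^{p,1}_{loc}((0,T)\times D)$ and is locally H\"older continuous in $(t,y)$. Absolute continuity for every fixed $t$, and not merely for a.e.\ $t$, follows from the continuity of $t\mapsto\mu_t$ together with the locally uniform bounds. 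For positivity I would use the parabolic Harnack inequality, valid for $\mathbb{H}^{p,1}$ solutions with drift in $L^p_{loc}$, $p>N+2$, on the connected domain $D$. If $p(x,t_0,y_0)=0$ at some interior point, Harnack would force $p(x,t,\cdot)\equiv 0$ on $D$ for $t<t_0$. That contradicts $P(x,t,D)=1$, which holds because the process stays in $D$.

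Third, I would handle the reflecting boundary condition, which I expect to be the main obstacle, because $b$ blows up at $\partial D$ and $\rho_{\beta,N}$ degenerates there. Since $b=\frac12\nabla\log\rho_{\beta,N}$, the interior equation can be written in divergence form:
\[
\partial_t p=\tfrac12\nabla\cdot\Bigl(\rho_{\beta,N}\nabla\tfrac{p}{\rho_{\beta,N}}\Bigr).
\]
The boundary condition is then the statement that the flux $-\frac12\rho_{\beta,N}\nabla(p/\rho_{\beta,N})$ has zero normal component. I would formulate it weakly as follows. For test functions $\psi\in C_0^\infty((0,T)\times\mathbb{R}^N)$ whose restriction to $\overline D$ is smooth but need not vanish on $\partial D$, we require
\[
\int_0^T\!\!\int_D(\partial_t\psi+L\psi)\,p\,dy\,dt=0,
\]
meaning the boundary term from integration by parts vanishes. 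To prove this I would let $\chi_\varepsilon$ be cutoffs equal to $1$ on $\{V\le 1/\varepsilon\}$, with $V$ as in \eqref{def:V}. I would apply the interior identity to $\psi\chi_\varepsilon$ and show that the error terms $p\,\nabla\psi\cdot\nabla\chi_\varepsilon$, $p\,\psi\Delta\chi_\varepsilon$ and $p\,\psi\, b\cdot\nabla\chi_\varepsilon$ vanish as $\varepsilon\to0$.

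The probabilistic input for this limit is that $X$ never hits $\partial D$, so $\mathbb{E}\int_0^T\mathbf 1\{V(X_s)>1/\varepsilon\}\,ds\to0$. This alone is not enough, since the $b\cdot\nabla\chi_\varepsilon$ term has growing weight. My first attempt would be to use instead the It\^o formula for $\psi(X_t)$ stopped at the exit time $\tau_\varepsilon$ of $\{V<1/\varepsilon\}$. That gives $\mathbb{E}\psi(X_{t\wedge\tau_\varepsilon})=\psi(x)+\mathbb{E}\int_0^{t\wedge\tau_\varepsilon}L\psi(X_s)\,ds$. I would then pass to the limit using $\tau_\varepsilon\to\infty$ a.s. together with a uniform integrability bound on $\int_0^t|b(X_s)|\,ds$. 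That bound should follow from the SDE itself, because the drift pushes particles apart. This verifies the weak Neumann-type condition directly, with no boundary regularity of $p$ required.
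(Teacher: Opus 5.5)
Your Steps 1 and 2 follow the paper's $1^\circ$--$2^\circ$: It\^o's formula for test functions compactly supported in $D$, then the Bogachev--Krylov--R\"ockner--Shaposhnikov density lemma. They are tighter than the paper at two points.
\begin{itemize}
\item You pass from a.e.\ $t$ to every $t$ via weak continuity of $t\mapsto\mu_t$ and joint continuity of the density. That is sound, whereas the paper invokes right-continuity of $t\mapsto P(x,t,B)$ for arbitrary Borel $B$, which does not follow from the Feller property.
\item Your Harnack argument supplies positivity, which the cited lemma (nonnegativity only) does not give.
\end{itemize}
One caution: the no-exit lemma of Section~\ref{sec::existence} itself cites this proposition for absolute continuity. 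To avoid circularity, run Steps 1--2 for the local solution killed at its lifetime $\zeta$. This works essentially verbatim because $\varphi\in C_0^\infty(D)$ vanishes near $\partial D$, and Harnack then only needs $\mathbb{P}(\zeta>t)>0$ for small $t$.

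The genuine gap is in Step 3, exactly where you defer. Your $\psi$ need not satisfy $n\cdot\nabla\psi=0$ on $\partial D$, so near a face $\{x_{i+1}=x_i\}$ the term $b\cdot\nabla\psi$ is of size $|b|\sim(x_{i+1}-x_i)^{-1}$. Identifying $\lim_{\varepsilon\to0}\mathbb{E}\int_0^{t\wedge\tau_\varepsilon}L\psi(X_s)\,ds$ with $\int_0^t\int_D L\psi\,p\,dy\,ds$ therefore needs $\int_0^t\int_{D\cap B_R}|b|\,p\,dy\,ds<\infty$ for balls $B_R$. Nothing in the proposal proves this, and the obvious routes fail in this generality.
\begin{itemize}
\item The repulsion heuristic works on the real line: pairing with an increasing weight vector $c$ gives $c\cdot b=\frac{\beta}{2}\sum_{i<j}\frac{c_j-c_i}{x_j-x_i}\ge0$ and $\mathbb{E}\int_0^t c\cdot b(X_s)\,ds=\mathbb{E}[c\cdot(X_t-x)]$. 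On $\Gamma$ the pair terms become $\frac{\beta}{2}\Re\frac{c_i\gamma'(x_i)-c_j\gamma'(x_j)}{\gamma(x_i)-\gamma(x_j)}$, which have no sign when $\gamma'$ is merely measurable. Moreover, for the wrap-around pair $(1,N)$ any increasing $c$ gives the wrong sign.
\item It\^o's formula for $V(X)$ combined with the Dyson-type bound controls $\mathbb{E}\int_0^t|\nabla V(X_s)|^2\,ds$, but only for $\gamma\in C^3$ and $\beta>1$.
\item Invariance of $\rho_{\beta,N}\,dy$, together with local integrability of $|b|\rho_{\beta,N}=\frac12|\nabla\rho_{\beta,N}|$ up to $\partial D$, gives the bound only for $\rho_{\beta,N}$-a.e.\ starting point. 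Upgrading to every $x$ would need something like $\sup_y p(x,s,y)/\rho_{\beta,N}(y)<\infty$, which is not established.
\end{itemize}

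Your stopping-time route is a cleaner formalization than the paper's Green's identity on exhausting compacts $K\uparrow D$. It can be salvaged cheaply by using only Neumann test functions, as the paper's $3^\circ$ does; formally, your version also encodes $p=0$ on $\partial D$. For $\gamma\in C^2$, near the face $\{x_{i+1}=x_i\}$ one has $b=\frac{\beta}{2(x_{i+1}-x_i)}(e_{i+1}-e_i)+O(1)$, with $e_k$ the standard basis vectors. The correction is bounded because $|\gamma'|\equiv1$, and the same structure holds at the other faces and their intersections. This singular part is normal to $\partial D$, and $(e_{i+1}-e_i)\cdot\nabla\psi=O(x_{i+1}-x_i)$ for Neumann $\psi$. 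Hence $b\cdot\nabla\psi$ stays bounded and your stopping argument closes by bounded convergence.

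For a merely rectifiable $\gamma$ the singular part of $b$ is not normal to $\partial D$, and the integrability estimate is needed again. The paper's $3^\circ$ does not supply it either. Cancelling $\int_D(Lf)p$ against $\int_D f\,\partial_t p$ ``due to $1^\circ$'' tacitly extends the It\^o identity to $f$ that do not vanish near $\partial D$.
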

In the proof of Proposition~\ref{prop::FPK_X_process} we will use the following result about existence of densities for measure-valued solutions to the Fokker-Planck-Kolmogorov equation.
\begin{lemma}[\cite{bogachev2015FPK}]\label{lem::existence_of_density}
Let $d\mu = \mu_{t}(dy)dt$ be a locally finite non-negative Borel measure on $(0,T)\times \Omega$ which solves the Fokker-Planck-Kolmogorov equation $\partial_{t}\mu = L^{*}\mu$, with $b_{i}\in L^\infty_{loc}(\Omega )$. Then, 
$d\mu = p(t,y)dydt$, where $p\in L^{r}_{loc}((0,T)\times \Omega)$ for any $r\in [1,1+\frac{1}{N+1}]$. The density $p$ is a non-negative locally Hölder continuous function on $(0,T)\times \Omega$.
\end{lemma}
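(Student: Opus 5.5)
This result is quoted from \cite{bogachev2015FPK}. To reprove it, I would first localize and then bootstrap, using parabolic regularity theory. All statements are local, so fix a cylinder $Q=(a,c)\times U$ with $[a,c]\times\overline U\subset(0,T)\times\Omega$ and a cutoff $\zeta\in C_0^\infty((0,T)\times\Omega)$ with $\zeta=1$ on $Q$. Testing \eqref{eq::weak_FPK_measure} with $\zeta\varphi$ shows that the compactly supported measure $\nu=\zeta\mu$ satisfies, in the sense of distributions on $\mathbb{R}^{N+1}$,
\[
\partial_t\nu-\tfrac12\Delta\nu=-\nabla\cdot(b\,\nu)+\sigma_0+\nabla\cdot\sigma_1 .
\]
Here $\sigma_0=(\partial_t\zeta+\tfrac12\Delta\zeta)\mu+b\cdot\nabla\zeta\,\mu$ and $\sigma_1=-\nabla\zeta\,\mu$ are finite measures, since $b\in L^\infty_{loc}$ and $\mu$ is locally finite.

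\emph{Step 1 (from a measure to $L^r$).} I would represent $\nu$ through the heat kernel $G$ of $\partial_t-\tfrac12\Delta$:
\[
\nu=G*\sigma_0+\nabla G*(\sigma_1-b\nu).
\]
Since $\nu$ is finite and $b$ is bounded on the support of $\zeta$, the right-hand side is a convolution of finite measures with $G$ and $\nabla G$. We have $|\nabla G(t,x)|\lesssim t^{-(N+1)/2}e^{-c|x|^2/t}$, and this lies in $L^{r}(\mathbb{R}^{N+1})$ locally exactly when $r<\frac{N+2}{N+1}$. Young's inequality for measures then gives $\nu\in L^r$ for such $r$. To reach the endpoint $r=1+\frac1{N+1}$, I would use the weak-type $L^{\frac{N+2}{N+1},\infty}$ bound for $\nabla G$ together with the improvement obtained in the next step. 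To make these manipulations rigorous I would mollify $\mu$ in $(t,x)$, prove the bounds uniformly in the mollification parameter, and pass to the limit. Nonnegativity of the density $p$ follows from $\mu\ge0$.

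\emph{Step 2 (bootstrap).} Once $p\in L^{r}_{loc}$ with $r>1$, the same representation applies with all measures replaced by functions, $\sigma_0,\sigma_1,bp\in L^r_{loc}$. By parabolic Calder\'on--Zygmund and Sobolev estimates, $\nabla G*(\cdot)$ maps $L^r$ to $L^{r^*}$ with $\frac1{r^*}=\frac1r-\frac1{N+2}$. Iterating finitely many times with shrinking cutoffs gives $p\in L^q_{loc}$ for some $q>N+2$. At that stage $bp\in L^q_{loc}$, and $p$ is a weak solution of the divergence-form equation $\partial_tp=\tfrac12\Delta p-\nabla\cdot(bp)$. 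Parabolic Morrey estimates, or De Giorgi--Nash--Moser theory for equations with bounded drift, then give local H\"older continuity of $p$ on $(0,T)\times\Omega$.

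The main obstacle I expect is Step 1. The distribution $\mu$ has no a priori regularity and the drift term $\nabla\cdot(b\mu)$ involves the unknown itself, so the duality and mollification argument must be carried out carefully. The goal is an estimate of $\int\psi\,d\nu$ by $\|\psi\|_{L^{r'}}$ with $r'>N+2$, obtained by solving the adjoint backward problem $-\partial_t u-\tfrac12\Delta u=\psi$ with $\nabla u$ bounded. Achieving the sharp exponent $r=1+\frac{1}{N+1}$, rather than only $r$ strictly below it, also needs the weak-type refinement described in Step 1.
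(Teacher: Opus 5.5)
Your proof is correct. The paper gives no argument of its own and only cites \cite{bogachev2015FPK}. There the general statement, with a variable nondegenerate diffusion matrix, is obtained in three steps: localize, prove a duality bound via an adjoint parabolic problem whose solution satisfies $\sup|\nabla u|\lesssim\|\psi\|_{L^{r'}}$ with $r'>N+2$, and then upgrade by parabolic $L^p$-regularity. For $A=\tfrac12 I$, your explicit heat-kernel representation followed by a Hardy--Littlewood--Sobolev bootstrap is the same mechanism in elementary form.

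Three of your side remarks are off and should be corrected.

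(i) Step~1 needs neither mollification nor duality. Since $\nu=\zeta\mu$ is a compactly supported distribution, the identity $\nu=G*\bigl((\partial_t-\tfrac12\Delta)\nu\bigr)$ holds exactly in $\mathcal{D}'(\mathbb{R}^{N+1})$. The fact that $b\nu$ involves the unknown is harmless, because you only use that $b\nu$ is a finite measure. That requires just $b\in L^1_{loc}(\mu)$, which is part of the definition of a solution. It is in fact the right hypothesis here, since $b\in L^\infty_{loc}(\Omega)$ is a Lebesgue-a.e.\ condition and says nothing about $b$ on a possibly singular part of $\mu$.

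(ii) The weak-type bound cannot deliver the endpoint $r=1+\frac{1}{N+1}$. For measure data, $\nabla G*\sigma$ is in general only in weak $L^{\frac{N+2}{N+1}}$: for $\sigma$ a point mass you get $\nabla G$ itself, and $\int|\nabla G|^{\frac{N+2}{N+1}}$ diverges like $\int_0 t^{-1}\,dt$. The weak-type refinement is also not needed. Step~2 starts from any $r>1$ and ends with a locally H\"older, hence locally bounded, density, which lies in $L^r_{loc}$ for every $r\in[1,\infty]$.

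(iii) De Giorgi--Nash--Moser presupposes an energy-class solution, which you have not established. Finish instead with the potential estimate you already have. For compactly supported $h\in L^q$ with $q>N+2$, the function $\nabla G*h$ is H\"older of order $1-\frac{N+2}{q}$ in the parabolic metric, and $G*h$ is smoother still. So the representation formula itself gives H\"older continuity of $p$ where $\zeta=1$.
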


\begin{proof}[Proof of Proposition~\ref{prop::FPK_X_process}]
$1^{\circ}.$ The drift $b = \frac{1}{2}\nabla\log\rho_{\beta,N}$ is locally bounded in $D$, so in particular $b_{i}\in L^{1}_{loc}(P(x,t,dy))$. By Itô's formula, for any $\varphi\in C^{2}({D})$, almost surely
\begin{align*}
    \varphi(X^{x}(t))
    = \varphi(x) + \int_{0}^{t}(L\varphi)(X^{x}(s))ds + \int_{0}^{t}\nabla\varphi (X^{x}(s))\cdot dB(s).
\end{align*}
Taking the expectation shows that $P(x,t,dy)$ solves the Cauchy problem 
\[
    \int_{D}\varphi(y)P(x,t,dy)  = \varphi(x) + \int_{0}^{t}\int_{D}(L\varphi)(y)P(x,t,dy).
\]
In particular,  differentiating with respect to time variable gives the weak formulation of the Fokker-Planck-Kolmogorov equation
\begin{align*}
    \frac{d}{dt}\int_{D}\varphi(y)P(x,t,dy) = \int_{D}(L\varphi)(y)P(x,t,dy).
\end{align*}

$2^{\circ}.$ By Lemma~\ref{lem::existence_of_density} the measure $P(x,t,dy)dt$ has a density with respect to $dydt$, which is positive and locally Hölder continuous on $(0,T)\times D$. This implies that $P(x,t,dy)\ll dy$ for almost every $t\in (0,T)$. Since the process is Feller,  the function $t\to P(x,t,B)$ is right-continuous for any $B\in\mathcal{B}_{D}$. Hence, $P(x,t,dy) \ll dy$ for all $t\in (0,T)$, and, as it holds for any $T>0$, the transition probability $P(x,t,dy)$ has a density for all $t>0$:
\[
    P(x,t,dy) = p(x,t,y)dt.
\]

$3^{\circ}.$ $L^{*}p(x,t,\bullet)$ is defined in the weak sense by the identity
\begin{align*}
    \int_{D}\varphi (L^{*}p)dy = \int_{D}(L\varphi)pdy\quad\forall\varphi\in C^{2}_{0}(D).
\end{align*}
Thus, it follows from $1^{\circ}.$ that $\partial_{t}p = L^{*}p$ weakly in $D$.
Let $K$ be a compact subset of $D$ with Lipschitz boundary. By Green's identity
\begin{align*}
    \int_{K}(Lf)gdy + \frac{1}{2}\int_{\partial K } (n\cdot\nabla f)gdS 
    = \int_{K}f (L^{*}g)dy 
    + \int_{\partial K}f n\cdot \left(-\frac{1}{2}\nabla g 
    + \frac{\nabla\rho}{2\rho} g\right)d\sigma\quad \forall g,f \in C^{2}(K).
\end{align*}
In general, $J = -\frac{1}{2}\rho\nabla \frac{p}{\rho}$ might not make sense pointwise, but $n\cdot J|_{\partial K}$ can be defined weakly by
\begin{align*}
    \int_{\partial K}f (n\cdot J)d\sigma 
    = \int_{K}(Lf)pdy 
    - \int_{K}f (L^{*}p)dy 
    + \frac{1}{2}\int_{\partial K } (n\cdot\nabla f)pd\sigma\quad \forall f \in C^{2}(K).
\end{align*}
Let $f\in C^{2}(\overline{D})$ and $n\cdot\nabla f=0$ on $\partial D$. Then, taking into account that $\partial_{t}p = L^{*}p$ weakly in $D$, in the limit $K\uparrow D$
\begin{align*}
    \lim\limits_{K\uparrow D}\int_{\partial K}f (n\cdot J)d\sigma  
    =\int_{D}(Lf)pdy
    - \int_{D}f (\partial_{t}p)dy
    + \frac{1}{2}\int_{\partial D} (n\cdot\nabla f)pd\sigma.
\end{align*}
The first two terms cancel each other due to $1^{\circ}.$, and the last term is zero because of the choice of the test function. Hence, the reflecting condition $n\cdot J|_{\partial D}=0$ is understood as 
\begin{align*}
    \lim\limits_{K\uparrow D}\int_{\partial K}f (n\cdot J)d\sigma  
    =0 \quad \forall f\in C^{2}(\overline{D}).
\end{align*}
\end{proof}
\subsubsection*{Kolmogorov type diffusion}
In the section we demonstrate that the parametrization process satisfies Kolmogorov's definition of diffusion, i.e., a Markov process with transition probability function $P(x,t,dy)$ such that there exit a function $b:D\times [0,+\infty)\to\mathbb{R}^{N}$, called drift coefficient, and a matrix-valued function $a(x,t)$, called diffusion coefficient, and for all $\delta>0, x,z\in D$ such that $\overline{B(x,\delta)}\subset D$ it holds that 
\begin{enumerate}
    \item $\lim\limits_{h\to 0+}\frac{1}{h}\int_{B(x,\delta)^{c}}p(x,h,y)dy=0,$
    \item $\lim\limits_{h\to 0+}\frac{1}{h}\int_{B(x,\delta)}(y-x)p(x,h,y)dy = b(x,t),$
    \item $\lim\limits_{h\to 0+}\frac{1}{h}\int_{B(x,\delta)}((y-x)\cdot z )^2p(x,h,y)dy= a(x,t)z\cdot z.$
\end{enumerate}  
\begin{proposition}\label{prop::Kolmogorov_diffusion}
Suppose $\gamma \in C^{2}$.
Then, the parametrization process $X^{x}$ is a diffusion process in the sense of Kolmogorov with identity diffusion matrix and the drift coefficient given by $b =\frac{1}{2}\nabla\log\rho_{\beta, N}$.   
\end{proposition}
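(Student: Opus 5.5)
Fix $x\in D$ and $\delta>0$ with $\overline{B(x,\delta)}\subset D$. The plan is to localize the SDE \eqref{eq::SDE_parametrization_process_X} to the ball and use Itô's formula with suitable test functions. Since $\gamma\in C^{2}$, the drift $b=\frac{1}{2}\nabla\log\rho_{\beta,N}$ is $C^{1}$ in $D$. In particular $b$ is bounded and continuous on $\overline{B(x,2\delta')}$ for a slightly larger radius $\delta'>\delta$ with $\overline{B(x,2\delta')}\subset D$ (shrink $\delta$ if needed; the limits do not depend on this).

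Let $\tau=\inf\{t\ge0: |X^{x}(t)-x|\ge\delta\}$. For $t<\tau$ we have $X^{x}(t)-x=B(t)+\int_{0}^{t}b(X^{x}(s))ds$ with $|b|\le M$ on the ball. Hence for small $h$ (so that $Mh<\delta/2$),
\[
\mathbb{P}[\tau\le h]\le \mathbb{P}\Bigl[\sup_{s\le h}|B(s)|\ge\delta/2\Bigr]\le C e^{-\delta^{2}/(8Nh)}=o(h).
\]
This gives condition (1), since $\int_{B(x,\delta)^{c}}p(x,h,y)dy\le\mathbb{P}[\tau\le h]$.

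For (2) and (3), apply Itô's formula to $\varphi(y)=y_i$ and $\varphi(y)=((y-x)\cdot z)^{2}$, stopped at $\tau\wedge h$. The stochastic integrals have bounded integrands up to $\tau$, so they are martingales with zero mean. This yields
\[
\mathbb{E}[X^{x}(h\wedge\tau)-x]=\mathbb{E}\int_0^{h\wedge\tau}b(X^{x}(s))ds,
\]
\[
\mathbb{E}[((X^{x}(h\wedge\tau)-x)\cdot z)^2]=\mathbb{E}\int_0^{h\wedge\tau}\bigl(|z|^{2}+2((X^{x}(s)-x)\cdot z)\,(b(X^{x}(s))\cdot z)\bigr)ds.
\]
Dividing by $h$, continuity of $b$ at $x$ and path continuity give $\frac1h\mathbb{E}\int_0^{h\wedge\tau}b\,ds\to b(x)$ by dominated convergence, using $\mathbb{P}[\tau\le h]\to0$ and the bound $M$. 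Likewise the second right-hand side divided by $h$ tends to $|z|^{2}$, because the cross term is $O(\mathbb{E}\sup_{s\le h}|X^x(s\wedge\tau)-x|)\to0$.

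It remains to compare $\frac1h\int_{B(x,\delta)}(y-x)p(x,h,y)dy=\frac1h\mathbb{E}[(X^{x}(h)-x)\mathbf 1_{\{|X^x(h)-x|<\delta\}}]$ with the stopped quantities. The two differ only on $\{\tau\le h\}$, where both integrands are bounded by $\delta$ (respectively $\delta^2|z|^2$) after restricting to the ball. Paths that exit and return are handled because on that event $X(h\wedge\tau)$ lies on the sphere, so its contribution is also bounded by a constant times $\mathbb{P}[\tau\le h]=o(h)$. Thus the differences are $o(h)$, which gives (2) with drift $b(x)$ and (3) with $a=I$.

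The main obstacle is this last bookkeeping step together with the exponential tail estimate, which must be genuinely $o(h)$ and not merely $o(1)$. The Gaussian tail bound for the supremum of Brownian motion handles it, since local boundedness of $b$ from $\gamma\in C^2$ is all that is needed. Hölder regularity of $p$ from Proposition~\ref{prop::FPK_X_process} is not required.
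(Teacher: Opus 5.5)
Your proof is correct, and it takes a different route from the paper's.

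\textbf{The paper's route.} The paper works globally on $D$. In step $1^\circ$ it applies It\^o's formula on all of $D$ to $f_k(y)=y_k$ and $f(y)=(y_i-x_i)(y_j-x_j)z_iz_j$, and obtains the \emph{unrestricted} rates $\lim_{h\to0+}\frac1h\int_D (f(y)-f(x))P(x,h,dy)=Lf(x)$. In step $2^\circ$ it proves $\frac1h\mathbb{E}|X^x(h)-x|^p\to0$ for $p>2$. This uses It\^o's formula for $|X^x-x|^p$ together with a one-sided bound on $(b(y)-b(x))\cdot(y-x)$. That bound is where $\gamma\in C^2$ enters: the identity $\Re\{\gamma''\overline{\gamma'}\}=0$ makes the expression tend to $-\infty$ at $\partial D$. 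Chebyshev's inequality then gives item 1 and removes the contribution from outside $B(x,\delta)$.

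\textbf{Your route.} You localize with the exit time $\tau$ of the ball instead. You only use boundedness of $b$ on the compact set $\overline{B(x,\delta)}\subset D$ and continuity of $b$ at $x$. Your detour through a ball of radius $2\delta'$ is unnecessary, since $\overline{B(x,\delta)}$ is already compact in $D$. The Gaussian tail of $\sup_{s\le h}|B(s)|$ then makes the exit probability exponentially small.

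\textbf{What your approach buys.}
\begin{itemize}
\item It is more elementary, and it really only needs $\gamma\in C^1$.
\item It gives the quantitative bound $P(x,h,B(x,\delta)^c)\le Ce^{-\delta^2/(8Nh)}$.
\item All your stochastic integrals are stopped and have bounded integrands. This avoids integrability questions the paper leaves implicit: in its step $1^\circ$, $Lf_k=b_k$ is unbounded near $\partial D$. So taking expectations in It\^o's formula, and the continuity of $t\mapsto\mathbb{E}[b(X^x(t))]$ at $t=0$, would need justification.
\end{itemize}

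\textbf{What the paper's approach buys.} It gives global information: first and second moment rates over all of $D$, not just over $B(x,\delta)$, and $\mathbb{E}|X^x(h)-x|^p=o(h)$. These reflect the repulsive structure of the drift at $\partial D$. This is more than Kolmogorov's definition requires.
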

\begin{proof}
$1^{\circ}.$ Let $f\in C^{\infty}(D)$. The time derivative of $(P_{t}f)(x)=\mathbb{E}[X^{x}(t)]$ at $t=0$ can be written as 
    \begin{align*}
        \frac{d}{dt}\int_{ D}f(y)P(x,t,dy)\Big|_{t=0}
        &= \lim\limits_{h\to 0+}\frac{1}{h}\left(\int_{ D}f(y)P(x,h,dy) - \int_{ D}f(y')P(x,0,dy')\right) \\ 
        &=\lim\limits_{h\to 0+}\frac{1}{h}\left(\int_{ D}f(y)P(x,h,dy) -f(x)\right)  \\
        & = \lim\limits_{h\to 0+}\frac{1}{h}\int_{ D}\left(f(y)-f(x)\right)P(x,h,dy) .
    \end{align*}
On the other hand, as in the proof of Proposition~\ref{prop::FPK_X_process}, from Itô's formula we have
\begin{align*}
     \frac{d}{dt}\int_{ D}f(y)P(x,t,dy)\Big|_{t=0}
     = \int_{D}(Lf)(y)P(x,t,dy)\Big|_{t=0}
     = (Lf)(x).
\end{align*}
We obtain the identity
\[
    \lim\limits_{h\to 0+}\frac{1}{h}\int_{ D}\left(f(y)-f(x)\right)P(x,h,dy)  = (Lf)(x) \quad\forall f\in C^{\infty}(D).
\]
Taking $f_{k}(y)=y_{k}$ yields
\begin{align*}
\lim\limits_{h\to 0+}\frac{1}{h}\int_{ D}\left(y_{k}-x_{k}\right)P(x,h,dy) 
=(Lf_{k})(x)=b_{k}(x).
\end{align*}
Taking $f_{ij}(y) =(y_{i}-x_{i})(y_{j}-x_{j})z_{i}z_{j}$ yields
\begin{align*}
   \lim\limits_{h\to 0+}\frac{1}{h}\int_{ D}(y_{i}-x_{i})(y_{j}-x_{j})z_{i}z_{j}P(x,h,dy) =  (Lf_{ij})(x) = z_{i}^2\delta_{ij}.
\end{align*}

$2^{\circ}.$ Next we show stochastic continuity of the transition probability function, that is, for all $x\in D$ and $\delta > 0$ such that $\overline{B(x,\delta)}\subset D$ the following limit holds
\begin{align*}
    \lim\limits_{h\to 0+}\frac{1}{h}P(x,h,B(x,\delta)^{c})=0.
\end{align*}
By Itô's formula, for $p>2$,
\begin{align*}
    \mathbb{E}\left[|X^{x}(t)-x|^{p}\right] 
    = &p\frac{N+p-2}{2}\int_{0}^{t}\mathbb{E}\left[|X^{x}(s)-x|^{p-2}\right]ds \\
    &+ p \int_{0}^{t}\mathbb{E}\left[|X^{x}(s)-x|^{p-2}\sum\limits_{i=1}^{N}b_{i}(X^{x}(s))((X^{x}(s))_{i}-x_{i})\right]ds.
\end{align*}
The sum in the second term can be written as
\begin{align*}
    \sum\limits_{i=1}^{N}b_{i}(y)(y_{i}-x_{i}) 
    = \sum\limits_{i=1}^{N}(b_{i}(y)-b_{i}(x))(y_{i}-x_{i})  +\sum\limits_{i=1}^{N} (y_{i}-x_{i})b_{i}(x). 
\end{align*}
Now we will show that the first term on the right hand side is bounded from above.
\begin{align*}
     &\sum\limits_{i=1}^{N}(b_{i}(y)-b_{i}(x))(y_{i}-x_{i}) \\
     &= \frac{\beta}{2}\sum\limits_{i=1}^{N}(y_{i}-x_{i})\sum\limits_{j:j\neq i}\Re\left\{\frac{\gamma'(y_{i})}{\gamma(y_{i})-\gamma(y_{j})}-\frac{\gamma'(x_{i})}{\gamma(x_{i})-\gamma(x_{j})} \right\}\\
     &= \frac{\beta}{2}\sum_{i < j}\Re\left\{\frac{(y_{i}-x_{i})\gamma'(y_{i})-(y_{j}-x_{j})\gamma'(y_{j})}{\gamma(y_{i})-\gamma(y_{j})}\right\}
    + \frac{\beta}{2} \sum_{i< j}\Re\left\{\frac{(x_{i}-y_{i})\gamma'(x_{i})-(x_{j}-y_{j})\gamma'(x_{j})}{\gamma(x_{i})-\gamma(x_{j})}\right\}.
\end{align*}
Now we take a closer look at the first term, which can be factored as
\begin{align}
    &\Re\left\{\frac{(y_{i}-x_{i})\gamma'(y_{i})-(y_{j}-x_{j})\gamma'(y_{j})}{\gamma(y_{i})-\gamma(y_{j})}\right\} \\
    &= (y_{i}-x_{i})\Re\left\{\frac{\gamma'(y_{i})-\gamma'(y_{j})}{\gamma(y_{i})-\gamma(y_{j})}\right\}
    + \left(1 - \frac{x_{j}-x_{i}}{y_{j}-y_{i}}\right)\Re\left\{\gamma'(y_{j})\frac{y_{i}-y_{j}}{\gamma(y_{i})-\gamma(y_{j})}\right\}.\label{eq::monotone_drift_diverges}
\end{align}
Since $x,y\in D$, it holds that $x_{i}<x_{j}$ and $y_{i}<y_{j}$ for $i<j$, and the term $\frac{x_{i}-x_{j}}{y_{i}-y_{j}}$ is always positive.  If $y_{j}-y_{i}\to0+$, the expression \eqref{eq::monotone_drift_diverges} can only go to $-\infty$, because under the assumption $\gamma\in C^2$
\[
    \lim\limits_{y_{j}-y_{i}\to 0+}\Re\left\{\gamma'(y_{j})\frac{y_{i}-y_{j}}{\gamma(y_{i})-\gamma(y_{j})}\right\} = 1
\]
and 
\[
     \lim\limits_{y_{j}-y_{i}\to 0+}  \Re\left\{\frac{\gamma'(y_{i})-\gamma'(y_{j})}{\gamma(y_{i})-\gamma(y_{j})}\right\} 
     = \Re\left\{\frac{\gamma''(y_{j})}{\gamma'(y_{j})}\right\} 
     =  \Re\left\{\gamma''(y_{j}) \overline{\gamma'(y_{j})}\right\} 
     =0.
\]
The same conclusion holds when $y_{j}-y_{i}\to l$ since the expression \eqref{eq::monotone_drift_diverges} can be rewritten as
\begin{align*}
    &\Re\left\{\frac{(y_{i}-x_{i})\gamma'(y_{i})-(y_{j}-x_{j})\gamma'(y_{j})}{\gamma(y_{i})-\gamma(y_{j})}\right\} \\
    &= (y_{i}-x_{i})\Re\left\{\frac{\gamma'(y_{i})-\gamma'(y_{j})}{\gamma(y_{i})-\gamma(y_{j})}\right\}+ \left(1 - \frac{l+x_{i}-x_{j}}{l+y_{i}-y_{j}}\right)\Re\left\{\gamma'(y_{j})\frac{l+y_{i}-y_{j}}{\gamma(y_{i})-\gamma(y_{j})}\right\}.
\end{align*}
Hence, 
\begin{align*}
    \lim\limits_{y\to \partial D}(b(x)-b(y))\cdot (x-y)= -\infty,
\end{align*}
and similarly for $x\to\partial D$. There exist a constant $C=C(\gamma, N)>0$ such that 
\begin{align*}
    \sum\limits_{i=1}^{N}(b_{i}(y)-b_{i}(x))(y_{i}-x_{i}) < C(\gamma, N).
\end{align*}
Hence, there are positive constants $c_{1},c_{2}$, that depend on $N,p, C(\gamma, N), |b(x)|, \beta $, such that 
\begin{align*}
    \mathbb{E}\left[|X^{x}(h)-x|^{p}\right] 
    \le c_{1} \int_{0}^{h}\mathbb{E}\left[|X^{x}(s)-x|^{p-2}\right]ds 
    + c_{2} \int_{0}^{h}\mathbb{E}\left[|X^{x}(s)-x|^{p-1}\right]ds.
\end{align*}
This implies that 
\[
    \lim\limits_{h\to 0+}\frac{1}{h}\mathbb{E}\left[|X^{x}(h)-x|^{p}\right] =0.
\]
By Chebyshev's inequality
\[
    P(x,h,B(x,\delta)^{c}) 
    = \mathbb{P}[|X^{x}(h)-x|\ge \delta]
    \le \delta^{-p}\mathbb{E}\left[|X^{x}(h)-x|^{p}\right].
\]
Hence, for all $x\in D$ and $\delta > 0$ such that $\overline{B(x,\delta)}\subset D$ we have
\begin{align*}
    \lim\limits_{h\to 0+}\frac{1}{h}P(x,h,B(x,\delta)^{c})=0.
\end{align*}
Consequently, combining the stochastic continuity with the results of $1^{\circ}.$ gives items $1.$ and $2.$ of the proposition. The fact that the equations $1.-3.$ can be written in terms of densities of the transition probabilities $P(x,t,dy)$ follows from Proposition~\ref{prop::FPK_X_process}.
\end{proof}

\subsection{Quotient parametrization process}
Let $\tilde X^{x}$ be the quotient parametrization process, defined in Section~\ref{sec::back_to_the_curve}, with the state space  
\begin{equation}\label{eq:domain_E}
    E=\{x\in\mathbb{R}^{N}: 0\le x_{1} < \dots< x_{N}< x_{1}+l<2l\},
\end{equation}
and a starting point $x\in E$. The transition probability function of $\tilde X^{x}$ is given by 
\[
    \tilde P(x,t, B) = P(x,t,f^{-1}(B)), \quad B\in\mathcal{B}_{E},
\]
where $P(x,t,dy)$ is the transition probability function of the parametrization process.
\begin{proposition}\label{prop::FPK_quotient_param_process}
    The transition probability function $\tilde P(x,t,dy)$ solves the Cauchy problem for the Fokker-Planck-Kolmogorov equation in $E$ with the drift $b_{\beta,N}=\frac{1}{2}\nabla\log\rho_{\beta, N}$. Moreover, $\tilde P(x,t,dy)$ has a positive locally Hölder continuous density 
    \[
        \tilde P(x,t,dy) = \rho_{t}(x,y)dy
    \]
    for every $t>0$.
\end{proposition}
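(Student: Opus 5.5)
The plan is to reduce everything to Proposition~\ref{prop::FPK_X_process} by periodization. Since $\tilde P(x,t,B)=P(x,t,f^{-1}(B))=\sum_{k\in\mathbb{Z}}P(x,t,B+kl\hat e)$, and Proposition~\ref{prop::FPK_X_process} gives $P(x,t,dy)=p(x,t,y)dy$ with $p$ positive and locally Hölder continuous on $(0,\infty)\times D$, we set
\[
\rho_t(x,y):=\sum_{k\in\mathbb{Z}}p(x,t,y+kl\hat e),\qquad y\in E .
\]
Monotone convergence then gives $\tilde P(x,t,dy)=\rho_t(x,y)dy$ for every $t>0$. The sum is finite a.e.\ because $\int_E\rho_t\,dy=1$. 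Positivity is immediate, since already the $k=0$ term is positive on $E\cap D$. Here the identified boundary piece $\{x_1=0\}$ has Lebesgue measure zero, so it is harmless for the density statement.

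For the FPK equation, the key observation is that $\rho_{\beta,N}$, and hence $b=b_{\beta,N}$, is invariant under $y\mapsto y+l\hat e$ because $\gamma$ is $l$-periodic. Hence $L$ commutes with translations along $l\hat e$. Take a test function $\varphi\in C_0^\infty$ on the cylinder $E$, meaning a smooth compactly supported function compatible with the identification. Extend it to the $l\hat e$-periodic function $\Phi=\varphi\circ f$ on $D$. Then
\[
\int_E\varphi\,d\tilde P(x,t,\cdot)=\int_D\Phi\,dP(x,t,\cdot).
\]
Applying It\^o's formula to $\Phi(X^x(t))$ gives
\[
\int_E\varphi\,d\tilde P(x,t,\cdot)=\varphi(x)+\int_0^t\int_E L\varphi\,d\tilde P(x,s,\cdot)\,ds,
\]
which is the Cauchy problem \eqref{eq::Cauchy_probem_x} in $E$. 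It\^o's formula applies because $\Phi$ is $C^2$ and bounded with bounded derivatives. The local martingale is a true martingale, and $L\Phi=(L\varphi)\circ f$ is locally bounded on the relevant support, using the drift bound on compacts of $D$ modulo translation. Note that $\Phi$ is not compactly supported in $D$, only compact modulo $l\hat e$. This is exactly what is needed: the drift is locally bounded on $D$ and periodic, hence bounded on $\operatorname{supp}\Phi$.

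The main obstacle is local Hölder continuity of $\rho_t(x,y)$, since an infinite sum of Hölder functions need not be Hölder. I will avoid controlling the tails of $p$ directly. Instead I apply Lemma~\ref{lem::existence_of_density} to the measure $\tilde P(x,t,dy)dt$ on $(0,T)\times\Omega$, where $\Omega$ is any local chart of the cylinder. In the interior of $E$ this chart is just $\operatorname{int}E$. Near the identified face $\{x_1=0\}\sim\{x_1=l\}$ the chart is a small open neighborhood in $D$ straddling $E$ and $E-l\hat e$, on which $f$ is injective. On each such chart the previous paragraph shows that $\tilde P$ solves $\partial_t\mu=L^*\mu$ with $b\in L^\infty_{loc}$. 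The lemma then yields a locally Hölder density, which must agree a.e.\ with $\rho_t$, so we take this continuous version. Finally, as in step $2^\circ$ of the proof of Proposition~\ref{prop::FPK_X_process}, right-continuity in $t$ of the Feller semigroup upgrades absolute continuity from a.e.\ $t$ to every $t>0$.
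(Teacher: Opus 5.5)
Your proposal is correct and follows essentially the same route as the paper. You compose the test function with $f$, use the $l\hat e$-periodicity of $\rho_{\beta,N}$ to get $L(\varphi\circ f)=(L\varphi)\circ f$, obtain the Cauchy problem on $E$ from It\^o's formula for the parametrization process, and then invoke Lemma~\ref{lem::existence_of_density} together with the Feller property to get a Hölder density for every $t>0$. Your explicit periodized density $\sum_k p(x,t,y+kl\hat e)$ and your charts straddling the glued face $\{x_1=0\}\sim\{x_1=l\}$ are useful refinements of points the paper passes over quickly. Note that the periodized density already gives absolute continuity and positivity for every $t$, so the Feller step is only needed to identify the continuous version at each fixed $t$.
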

\begin{proof}
For any $\varphi \in C^{\infty}_{0}(E)$, the mapping 
\[
    y\to \varphi(f(y)) = \sum\limits_{k=-\infty}^{\infty} \varphi( y-kl\hat{e})\mathds{1}_{\{E+kl\hat{e}\}}(y)
\]
is $C^{\infty}(D)$, and the following associativity relation holds 
\[
    (L\varphi)\circ f = L(\varphi\circ f),
\]
where $L=\frac{1}{2}\Delta + \frac{1}{2}\nabla\log\rho_{\beta, N}\cdot\nabla$. With the help of Proposition~\ref{prop::FPK_X_process} and the change of variable formula we obtain
\begin{align*}
    \int_{E}\varphi(\tilde y)\tilde P(x,t, d\tilde y)  
    &=\int_{D}(\varphi\circ f)(y)P(x,t,dy)\\
    &= (\varphi\circ f)(x) + \int_{0}^{t}\int_{D}(L(\varphi\circ f))(y)P(x,s,dy)ds\\
    &=\varphi(x) + \int_{0}^{t}\int_{D}((L\varphi)\circ f)(y)P(x,s,dy)ds\\
    &=\varphi(x) + \int_{0}^{t}\int_{E}(L\varphi)(\tilde y)\tilde P(x,s,d\tilde y)ds.
\end{align*}
Therefore, the transition probability function $\tilde P(x,t,dy)$ solves the Cauchy problem for the  FPK equation on $E$ with the initial distribution $\delta_{x}$. Since the process $\tilde X^{x}$ is Feller, as in the proof of Proposition~\ref{prop::FPK_X_process}, the transition probability $\tilde P(x,t,dy)$ has a positive locally Hölder continuous density 
\[
    \tilde P(x,t,dy) = \rho_{t}(x,y)dy
\]
for every $t>0$.
\end{proof}

\begin{remark}
    If the parametrization $\gamma \in C^{\infty}$, then the drift coefficient $b_{\beta,N}=\frac{1}{2}\nabla\log\rho_{\beta, N}$ is smooth in $E$. Hence, by Weyl regularity, the density $(t,y)\to\rho_{t}(x,y)$, as a solution to the FPK equation with smooth coefficients, is itself smooth in $E$ and satisfies the FPK equation pointwise
\[
    \partial_{t}\rho_{t} = \nabla_{y}\cdot \left(-\frac{1}{2}\rho_{\beta, N}\nabla_{y}\frac{\rho_{t}}{\rho_{\beta, N}}\right).
\]
\end{remark}

\section{Convergence to stationary distribution}\label{sec::ergodicity}
In this section we prove Theorem~\ref{thm::exp_conv_to_stationary_distribution}, that is, the transition probability of Dyson Brownian motion converges exponentially fast, as $t\to+\infty$, to the stationary distribution given by the Coulomb gas density $\rho_{\beta, N}$. First, we show that the transition probability of the quotient parametrization process $\tilde{X}^{x}$, defined in Section~\ref{sec::back_to_the_curve}, satisfies a Poincaré type inequality. Then, using the inequality and Fokker-Plank-Kolmogorov equation, we prove convergence and estimate the convergence rate. 

\subsection{Poincaré type inequality}
\begin{lemma}\label{lem::Poincare_inequality} Let $\gamma$ be $C^{1}$ and let $E$ be as in \eqref{eq:domain_E}. Then, there exists a constant $C = C(\beta, \gamma, N)>0$ such that for any $f\in C^{1}(E)$ such that $\int_{E}f(y)\rho_{\beta,N}(y)dy=0$ the following Poincaré type inequality holds
\[
    \int_{E}f^{2}(y)\rho_{\beta,N}(y)dy \le C \int_{E} \left|\nabla f(t)\right|^{2}\rho_{\beta,N}(y)dy.
\]
\end{lemma}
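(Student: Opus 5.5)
Since $\int_E f\rho_{\beta,N}\,dy=0$, the left-hand side is the variance of $f$ under the probability measure $\mu=\rho_{\beta,N}\,dy$ on $E$. So we may write
\[
\int_E f^2\rho_{\beta,N}\,dy=\frac12\int_E\int_E (f(x)-f(y))^2\rho_{\beta,N}(x)\rho_{\beta,N}(y)\,dx\,dy .
\]
The plan is a path (canonical-path) argument on $E$. Compare $\rho_{\beta,N}$ with the model weight
\[
w(x)=\prod_{i<j}\min(x_j-x_i,\,l-(x_j-x_i))^{\beta},
\]
which is the weight of Dyson Brownian motion on the circle written in angles. Because $\Gamma$ is a $C^1$ Jordan curve, the bi-Lipschitz property $c\min(|s-t|,l-|s-t|)\le|\gamma(s)-\gamma(t)|\le \min(|s-t|,l-|s-t|)$ holds with some $c=c(\gamma)>0$. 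This is the only place $C^1$ is used: a continuous tangent plus injectivity give a chord-arc bound by compactness. It follows that $c^{\beta N(N-1)/2}\le \rho_{\beta,N}Z_{\beta,N}/w\le 1$ on $E$. By the standard bounded-perturbation (Holley--Stroock) lemma, a Poincaré inequality for $w\,dy$ transfers to $\rho_{\beta,N}\,dy$. The constant changes by at most the ratio $\sup(\rho Z/w)/\inf(\rho Z/w)$, and the variance is characterized by $\inf_a\int(f-a)^2\,d\mu$. It therefore suffices to prove the inequality for the weight $w$ on $E$, and this no longer involves $\gamma$.

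\textbf{Model weight.} Change variables to the gaps $g_k=x_{k+1}-x_k$ for $k=1,\dots,N-1$, together with the rotation coordinate $x_1\in[0,l)$. Then $E$ becomes a product of the interval $[0,l)$ and the simplex $\Sigma=\{g_k>0,\ \sum g_k<l\}$, with unit Jacobian. The weight $w$ depends only on the gaps, and it is comparable on $\Sigma$ to $\prod_{k=1}^N g_k^{\beta}\cdot q(g)$, where $g_N=l-\sum g_k$ and $q$ is bounded above and below away from the multiple-collision faces. To avoid this subtlety I would instead argue directly with paths. For $x,y\in E$, join them by a path that first moves each coordinate of $x$ toward the equally spaced configuration $e$ along straight segments inside the convex set $E$, and then goes from $e$ to $y$. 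Using $f(x)-f(y)=\int_\text{path}\nabla f$, Cauchy--Schwarz, and Fubini, the bound reduces to a Hardy-type estimate. The key point is that along the segment $x_t=(1-t)x+te$, each gap is at least $(1-t)g_k(x)+t\,l/N$. Hence $w(x_t)\ge (1-t)^{\beta N(N-1)/2}w(x)$ and $w(x_t)\gtrsim t^{\beta N(N-1)/2}$. This lets the weight $w(x)$ be traded for $w(x_t)$ up to a Jacobian factor $(1-t)^{-N}$ from the change of variables $x\mapsto x_t$. The $x_1$-direction causes no trouble: it is a bounded interval with a periodic identification, so it contributes only a one-dimensional Poincaré constant of order $l^2$.

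\textbf{Main obstacle.} The hard step is integrability near $t\to1$ and $t\to0$ in this path argument. The factor $(1-t)^{-N}$ from the Jacobian must be beaten by the weight comparison. If a direct straight-line path gives a nonintegrable singularity, I would fall back on a more robust route. First, prove a local Poincaré inequality on compact subsets $K\subset E$ where $w$ is bounded below; there the standard Euclidean inequality on a convex set applies. Second, control the mass near $\partial E$ with a Lyapunov function argument in the style of Bakry--Barthe--Cattiaux--Guillin. One takes $W=e^{-\varepsilon V_w}$ with $V_w=-\log w$, or simply $W=V_w$, and checks that $L_wW\le -\lambda W+b\mathds 1_K$ for $L_w=\frac12\Delta+\frac12\nabla\log w\cdot\nabla$. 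This drift condition is plausible because $\nabla\log w$ points inward with magnitude of order $\beta/\mathrm{dist}$ near each face. The $\beta/g_k^2$-type terms then dominate the Laplacian terms, including near edges where several gaps vanish, for every $\beta>0$. Combining the local inequality on $K$ with the Lyapunov condition yields the global Poincaré inequality with $C=C(\beta,\gamma,N)$.
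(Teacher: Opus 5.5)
\textbf{Verdict: there is a genuine gap.} The step that carries all the content is never proved.

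Your first step is sound, and it is a different start from the paper. The chord--arc bound $c\,m_{ij}\le|\gamma(x_j)-\gamma(x_i)|\le m_{ij}$, with $m_{ij}=\min(x_j-x_i,\,l-(x_j-x_i))$, does follow from $C^1$, injectivity and compactness. Holley--Stroock then reduces the lemma to the weight $w=\prod_{i<j}m_{ij}^{\beta}$. The paper instead works with $\rho_{\beta,N}$ directly: it telescopes $f(y)-f(\tilde y)$ one coordinate at a time, applies Cauchy--Schwarz against $\rho_{\beta,N}$ and $1/\rho_{\beta,N}$, and bounds a ratio of densities.

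The unproved step is the Poincar\'e inequality for $w\,dy$ on $E$. Neither of your two routes establishes it.
\begin{itemize}
\item \emph{Straight-line paths through $e$.} This scheme fails for $N\ge2$, and not because of the bookkeeping in $t$. Routing every path through one point gives a congestion kernel of order $|z-e|^{1-N}$ near $e$, which is not bounded by $C\,w(z)$. No refinement can fix this: the scheme in effect controls $f(e)$ by the weighted Dirichlet energy. That is impossible, since $w$ is comparable to a constant near $e$ and points have zero capacity in dimension $N\ge2$.
\item \emph{Lyapunov fallback.} This is only asserted, and the claim that it works for every $\beta>0$ is false for the functions you name. In the normal variable $g$ to a collision face, up to constants $L_w\approx\partial_g^2+\frac{\beta}{g}\partial_g$.
  \begin{itemize}
  \item $L_wV_w\approx\beta(1-\beta)g^{-2}$, which is positive for $\beta<1$.
  \item $L_wg^{-a}\approx a(a+1-\beta)g^{-a-2}$, which requires $\beta>1+a$.
  \item $e^{-\varepsilon V_w}=w^{\varepsilon}$ vanishes on the faces, so it is not a Lyapunov function at all.
  \end{itemize}
  For $\beta<1$ the boundary terms in the Bakry--Barthe--Cattiaux--Guillin integration by parts do not vanish either. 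The multiple-collision edges are also not checked.
\end{itemize}

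The missing idea is log-concavity, which makes the model step immediate. On $E$ each $m_{ij}$ is a positive minimum of two affine functions, hence concave. So $\log w=\beta\sum_{i<j}\log m_{ij}$ is concave, and $w\,dy$ is a log-concave measure on the bounded convex set $E$. The Payne--Weinberger inequality for log-concave weights on convex domains (e.g.\ Ferone--Nitsch--Trombetti) then gives
$\int_E (f-\bar f)^2 w\,dy\le \pi^{-2}\operatorname{diam}(E)^2\int_E|\nabla f|^2w\,dy$,
with $\bar f$ the $w$-mean. This holds for every $\beta>0$ and requires no periodicity of $f$.

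Combined with your comparison step and $\int_E f^2\rho_{\beta,N}\le\int_E (f-a)^2\rho_{\beta,N}$ for all constants $a$, this proves the lemma with $C=\pi^{-2}\operatorname{diam}(E)^2c^{-\beta N(N-1)/2}$. It also avoids the delicate point in the paper's argument. There, a segment $r_i\mapsto(\tilde y_1,\dots,\tilde y_{i-1},r_i,y_{i+1},\dots,y_N)$ can leave $E$ and cross a collision hyperplane, which makes $\int 1/\rho_{\beta,N}\,dr_i'$ infinite when $\beta\ge1$.
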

\begin{proof}
By Jensen inequality, 
\begin{align*}
    \int_{E}f(y)^{2}\rho_{\beta,N}(y)dy 
    &= \int_{E}\left(f(y)- \int_{E}f(\tilde y)\rho_{\beta,N}(\tilde y)d\tilde y\right)^{2}\rho_{\beta,N}(y)dy\\
    &\le \iint_{E\times E}(f(y)-f(\tilde y))^{2}\rho_{\beta,N}(y)\rho_{\beta,N}(\tilde y)dyd\tilde y\\
    & = 2\iint_{E\times E}(f(y)-f(\tilde y))^{2}\rho_{\beta,N}(y)\rho_{\beta,N}(\tilde y)\mathds{1}_{\{y_{1} < \tilde  y_{1}\}}dyd\tilde y,
\end{align*}
\begin{align*}
    f(y)-f(\tilde y) 
    =& f(y_{1}, \dots, y_{N}) - f(\tilde y_{1}, y_{2},\dots,y_{N})\\
    &+ f(\tilde y_{1}, y_{2},\dots,y_{N}) - f(\tilde y_{1}, \tilde y_{2}, y_{3}, \dots,y_{N})\\
    &+\hspace{3.1cm}\dots\\
    &+ f(\tilde y_{1},\dots, \tilde y_{N-1},y_{N})-f(\tilde y_{1},\dots, \tilde y_{N-1},\tilde y_{N}).
\end{align*}
For $i=1,\dots, N$, denote
\begin{align*}
    a_{i}
    :&=f(\tilde y_{1},\dots \tilde y_{i-1},  y_{i}, y_{i+1}, \dots, y_{N}) - f(\tilde y_{1},\dots,  \tilde y_{i-1}, \tilde y_{i}, y_{i+1}, \dots, y_{N}) \\
    &= \int_{\tilde y_{i}}^{y_{i}} \partial_{r_{i}}f(\tilde y_{1},\dots \tilde y_{i-1},  r_{i}, y_{i+1}, \dots, y_{N}) dr_{i}.
\end{align*}
Then, 
\begin{align*}
    \int_{E}f(y)^{2}\rho_{\beta,N}(y)dy  
    \le  2 N \sum_{i=1}^{N}\iint_{E\times E}(a_{i})^{2}\rho_{\beta,N}(y)\rho_{\beta,N}(\tilde y)\mathds{1}_{\{y_{1} < \tilde  y_{1}\}}dyd\tilde y
\end{align*}
By Cauchy-Schwarz inequality,
\begin{align*}
    (a_{i})^{2} &=\left(\int_{\tilde y_{i}}^{y_{i}} \partial_{r_{i}}f(\tilde y_{1},\dots \tilde y_{i-1},  r_{i}, y_{i+1}, \dots, y_{N}) dr_{i}\right)^{2} \\
    &\le \int_{\tilde y_{i}}^{y_{i}} |\nabla f(\tilde y_{1},\dots \tilde y_{i-1},  r_{i}, y_{i+1}, \dots, y_{N})|^{2} \rho_{\beta,N}(\tilde y_{1},\dots \tilde y_{i-1},  r_{i}, y_{i+1}, \dots, y_{N}) dr_{i}\times \\
    &\hspace{7cm}\times\int_{\tilde y_{i}}^{y_{i}} \frac{1}{\rho_{\beta,N}(\tilde y_{1},\dots \tilde y_{i-1},  r'_{i}, y_{i+1}, \dots, y_{N}) }dr'_{i}.
\end{align*}
\[
    J(y, \tilde y, r_{i}, r'_{i}):= \frac{ |\nabla f(\tilde y_{1},\dots \tilde y_{i-1},  r_{i}, y_{i+1}, \dots, y_{N})|^{2} \rho_{\beta,N}(\tilde y_{1},\dots \tilde y_{i-1},  r_{i}, y_{i+1}, \dots, y_{N}) }{\rho_{\beta,N}(\tilde y_{1},\dots \tilde y_{i-1},  r'_{i}, y_{i+1}, \dots, y_{N})}\rho_{\beta,N}(y)\rho_{\beta,N}(\tilde y).
\]
Taking into account that $y_{1} < \tilde  y_{1}$,
\begin{align*}
    &\iint_{E\times E} (a_{i})^{2}\rho_{\beta,N}(y)\rho_{\beta,N}(\tilde y)\mathds{1}_{\{y_{1} < \tilde  y_{1}\}}dyd\tilde y\\
    &\le
    \int_{0}^{l}dy_{1}
    \int_{y_{1}}^{l}d\tilde y_{1}\dots 
    %\int_{y_{1}}^{y_{1}+l}dy_{2}
    %\int_{\tilde y_{1}}^{\tilde y_{1}+l}d\tilde y_{2}\dots 
    \int_{y_{i-1}}^{y_{1}+l}dy_{i}
    \int_{\tilde y_{i-1}}^{\tilde y_{1}+l}d\tilde y_{i}
    \int_{\tilde y_{i}}^{y_{i}}dr_{i}
    \int_{\tilde y_{i}}^{y_{i}}d\tilde r_{i}\times\\
    &\hspace{2cm}\times\int_{y_{i}}^{y_{1}+l}dy_{i+1}
    \int_{\tilde y_{i}}^{\tilde y_{1}+l}d\tilde y_{i+1}\dots
    \int_{y_{N-1}}^{y_{1}+l}dy_{N}
    \int_{\tilde y_{N-1}}^{\tilde y_{1}+l}d\tilde y_{N}J(y,\tilde y, r_{i}, r'_{i})\\
    &\le
    \int_{0}^{l}dy_{1}
    \int_{y_{1}}^{l}d\tilde y_{1}\dots 
    %\int_{y_{1}}^{y_{1}+l}dy_{2}
    %\int_{\tilde y_{1}}^{\tilde y_{1}+l}d\tilde y_{2}\dots 
    \int_{y_{i-1}}^{y_{1}+l}dy_{i}
    \int_{\tilde y_{i-1}}^{\tilde y_{1}+l}d\tilde y_{i}
    \int_{\boldsymbol{\tilde y_{i-1}}}^{\boldsymbol{\tilde y_{1}+l}}dr_{i}
    \int_{\tilde y_{i}}^{y_{i}}d\tilde r_{i}\times\\
    &\hspace{2cm}\times
    \int_{\boldsymbol{r_{i}}}^{\boldsymbol{\tilde y_{1}+l}}dy_{i+1}
    \int_{\tilde y_{i}}^{\tilde y_{1}+l}d\tilde y_{i+1}\dots
    \int_{y_{N-1}}^{\boldsymbol{\tilde y_{1}+l}}dy_{N}
    \int_{\tilde y_{N-1}}^{\tilde y_{1}+l}d\tilde y_{N}
    J(y,\tilde y, r_{i}, r'_{i}).
\end{align*}
Denote $v = (\tilde y_{1}, \dots, \tilde y_{i-1}, r'_{i}, y_{i+1},\dots, y_{N})$. 
\begin{align*}
    &\int_{E}dv |\nabla f(v)|^{2}\rho_{\beta,N}(v) \times\\
    &\times
    \int_{0}^{l}dy_{1}\dots  
    \int_{y_{i-1}}^{y_{1}+l}dy_{i}
    \int_{\tilde y_{i-1}}^{\tilde y_{1}+l}d\tilde y_{i}
    \int_{\tilde y_{i}}^{y_{i}}d\tilde r_{i}
    \int_{\tilde y_{i}}^{\tilde y_{1}+l}d\tilde y_{i+1}\dots
    \int_{\tilde y_{N-1}}^{\tilde y_{1}+l}d\tilde y_{N} 
    \frac{\rho_{\beta,N}(\tilde y_{1},\dots, \tilde y_{N})\rho_{\beta,N}(y_{1},\dots,y_{N})}{\rho_{\beta,N}(\tilde y_{1},\dots, \tilde y_{i-1}, r'_{i}, y_{i+1},\dots, y_{N})}.
\end{align*}
The inner integrand equals
\begin{align*}
    \frac{\rho_{\beta,N}(\tilde y_{1},\dots, \tilde y_{N})\rho_{\beta,N}(y_{1},\dots,y_{N})}{\rho_{\beta,N}(\tilde y_{1},\dots, \tilde y_{i-1}, r'_{i}, y_{i+1},\dots, y_{N})}
    = & \frac{1}{Z_{\beta,N}(\gamma)}\frac{\prod_{k: k \neq i}|\gamma(\tilde y_{i})-\gamma(\tilde y_{k})|^{\beta}\prod_{l: l \neq i}|\gamma( y_{i})-\gamma( y_{l})|^{\beta}}{\prod_{p:p < i} |\gamma( r'_{i})-\gamma(\tilde y_{p})|^{\beta}
    \prod_{q:q > i} |\gamma( r'_{i})-\gamma(y_{q})|^{\beta}} \times \\
    &\times\prod_{i<\alpha < \eta}|\gamma(\tilde y_{\alpha})-\gamma(\tilde y_{\eta})|^{\beta}
    \prod_{\zeta < \theta < i}|\gamma( y_{\zeta})-\gamma( y_{\theta})|^{\beta}.
\end{align*}
For any $\beta >0$, the integral with respect to $y_{1}, \dots, y_{i}, \tilde y_{i}, r'_{i}, \tilde y_{i+1}, \dots, \tilde y_{N}$ is uniformly bounded with respect to the remaining variables by some constant $c = c(\beta, \gamma, N)>0$. Hence, 
\[
     \int_{E}f(y)^{2}\rho_{\beta,N}(y)dy 
     \le 2N^{2} c(\beta, \gamma, N)
     \int_{E} |\nabla f(v)|dv^{2}\rho_{\beta,N}(v).\qedhere
\]
\end{proof}
\begin{corollary}
 Suppose $\gamma\in C^{\infty}$.  Let $\rho_{t}(x,y)$ be the density of the transition probability function of the quotient parametrization process $\tilde X^{x}$ in $E$. Then, there exists a constant $C = C(\beta,\gamma, N)>0$ such that the following Poincaré type inequality holds
    \[
    \int_{E}\left(\frac{\rho_{t}(x,y)}{\rho_{\beta,N}(y)}-1\right)^{2}\rho_{\beta,N}(y)dy  \le C\int_{E}\left|\nabla_{y} \frac{\rho_{t}(x,y)}{\rho_{\beta,N}(y)}\right|^{2}\rho_{\beta,N}(y)dy.
\]
\begin{proof}
    The claim follows from Lemma~\ref{lem::Poincare_inequality} by taking $f(y) = \frac{\rho_{t}(x,y)}{\rho(y)}-1$.
\end{proof}
\end{corollary}
\subsection{Exponential convergence rate}
\begin{proposition}\label{prop::exp_convergence_of_density_in_E}
Suppose $\gamma\in C^{\infty}$. Let $\rho_{t}(x,y)$ be the density of the transition probability function of the quotient parametrization process $\tilde X^{x}$ in $E$.  Then, there exists a constant $C = C(\beta,\gamma, N)>0$ such that 
\begin{equation}\label{eq::Poincare_inequality}
    \int_{E}\left(\frac{\rho_{t}(x,y)}{\rho_{\beta,N}(y)}-1\right)^2\rho_{\beta,N}(y) dy
    \le \exp\left(-\frac{t}{C}\right).    
\end{equation}
In particular, the transition probability function $\tilde P(x, t, dy)$ converges weakly, as $t\to+\infty$, to the stationary distribution $\rho_{\beta,N}(y)dy$ on $(E, \mathcal{B}_{E})$ given by the density 
\[
    \rho_{\beta, N}(y) 
    = \frac{1}{Z_{\beta, N}(\gamma)}\prod_{i\neq j}|\gamma(y_{i})-\gamma(y_{j})|^{\frac{\beta}{2}}.
\]
\end{proposition}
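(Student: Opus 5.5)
The plan is the standard $L^2(\rho_{\beta,N})$ entropy--dissipation argument. Write $\rho=\rho_{\beta,N}$, $h_t(y)=\rho_t(x,y)/\rho(y)$, and
\[
    \Phi(t)=\int_E (h_t-1)^2\rho\,dy .
\]
We will show $\Phi'(t)\le -\tfrac{1}{C}\Phi(t)$ and then apply Grönwall.

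First, differentiate $\Phi$. Since $\gamma\in C^\infty$, the remark following Proposition~\ref{prop::FPK_quotient_param_process} says that $\rho_t$ is smooth in $E$. It also satisfies the divergence-form equation
\[
    \partial_t\rho_t=\nabla\cdot\left(\tfrac12\rho\nabla h_t\right).
\]
Hence
\[
    \Phi'(t)=2\int_E (h_t-1)\,\partial_t\rho_t\,dy
    = 2\int_E (h_t-1)\,\nabla\cdot\left(\tfrac12\rho\nabla h_t\right)dy .
\]
Integrating by parts gives $\Phi'(t)=-\int_E|\nabla h_t|^2\rho\,dy$. Two points justify this.
\begin{itemize}
\item \emph{Periodic faces.} The faces $\{y_1=0\}$ and $\{y_1=l\}$ are identified, and both $\rho$ and $h_t$ are invariant under $y\mapsto y+l\hat e$. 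So the boundary fluxes through these two faces cancel.
\item \emph{Collision faces.} On $\partial E\cap\partial D$ we use the reflecting condition $n\cdot(-\tfrac12\rho\nabla h_t)=0$ from Proposition~\ref{prop::FPK_X_process}. Rigorously, we integrate over compact $K\uparrow E$ and pass to the limit exactly as in step $3^\circ$ of that proof, with test function $f=h_t-1$.
\end{itemize}
Conservation of mass, $\int_E\rho_t\,dy=1=\int_E\rho\,dy$, gives $\int_E(h_t-1)\rho\,dy=0$. Therefore $f=h_t-1$ is admissible in the Poincaré inequality of the preceding Corollary (Lemma~\ref{lem::Poincare_inequality}), and
\[
    \Phi'(t)\le -\tfrac1C\,\Phi(t).
\]
Grönwall then yields $\Phi(t)\le \Phi(t_0)e^{-(t-t_0)/C}$ for $t\ge t_0>0$.

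To reach the stated form $\exp(-t/C)$ with no prefactor, we need $\Phi(t_0)$ finite and controlled. The plan is to start from a fixed small time $t_0>0$ and bound $\sup_y \rho_{t_0}(x,y)/\rho(y)$. We then absorb $\Phi(t_0)$ and the shift $t_0$ into a redefined constant $C$, which may then depend on $x$. Alternatively, if finiteness of $\Phi$ at early times is hard to establish, we would state the bound with the prefactor $\Phi(t_0)$.

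Weak convergence follows from Cauchy--Schwarz. For bounded continuous $\varphi$ on $E$,
\[
    \left|\int\varphi\,\rho_t\,dy-\int\varphi\,\rho\,dy\right|
    =\left|\int\varphi\,(h_t-1)\rho\,dy\right|
    \le\|\varphi\|_\infty\,\Phi(t)^{1/2}\to0,
\]
and the convergence is exponential.

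The main obstacle is the justification of the energy identity near $\partial D$, where $\rho$ vanishes and $b$ blows up. We must show that the boundary term $\int_{\partial K}(h_t-1)\rho\,n\cdot\nabla h_t\,d\sigma$ tends to $0$ as $K\uparrow E$. We also need $\Phi(t)<\infty$ and enough regularity to differentiate under the integral. I would try to get these from local bounds on $h_t$ near the boundary, via Proposition~\ref{prop::FPK_X_process} together with the weak reflecting condition. If that fails, I would approximate $h_t-1$ by the truncated and cut-off functions $\chi_K (h_t-1)$. The cut-off error would be controlled by the capacity-zero property of $\partial D$ from Proposition~\ref{prop:capacity_zero}, which holds for $\beta\ge1$.
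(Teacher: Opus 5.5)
Your proposal follows the paper's proof essentially step for step: differentiate $\Phi$, use the divergence form $\partial_t\rho_t=\tfrac12\nabla\cdot(\rho\nabla h_t)$, cancel the boundary flux by periodicity on the faces $\{y_1=0\}$ and $\{y_1=l\}$, and invoke the reflecting condition on $\partial E\cap\partial D$ (which the paper also only asserts, without admissibility of $h_t-1$ as a test function), then apply the Poincaré inequality and Grönwall. Your caution about the prefactor is justified: the paper's ``after integration yields the claim'' silently takes $\Phi(0)\le 1$, whereas $\Phi(t)\to\infty$ as $t\to0^+$ because $\rho_t(x,\cdot)\to\delta_x$, so the argument really gives $\Phi(t)\le\Phi(t_0)e^{-(t-t_0)/C}$ with an $x$-independent rate; absorbing $\Phi(t_0)$ into $C$ only works for large $t$, and finiteness of $\Phi(t_0)$ is an extra input that the paper does not supply.
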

\begin{proof}
In the course of the proof $\nabla \rho_{t}$ refers to taking the gradient with respect to the second spatial variable. Differentiating the left-hand side of the inequality \eqref{eq::Poincare_inequality} with respect to time gives
\begin{align*}
    \frac{d}{dt}\int_{E}\left(\frac{\rho_{t}}{\rho_{\beta,N}}-1\right)^2 \rho_{\beta,N}dy
    &=2 \int_{E}\left(\frac{\rho_{t}}{\rho_{\beta,N}}-1\right) \partial_{t}\rho_{t} dy\\
    &=-\int_{E}\left(\frac{\rho_{t}}{\rho_{\beta,N}}-1\right)\nabla\cdot\left(\rho_{\beta,N}\nabla\frac{\rho_{t}}{\rho_{\beta,N}}\right)dy,
\end{align*}
where in the last equality we used that the density $\rho_{t}$ satisfies the FPK equation pointwise in $E$. Integrating by parts, the right-hand side equals
\[
    -\frac{1}{2}\int_{E}\left|\nabla\frac{\rho_{t}}{\rho_{\beta,N}}\right|^{2}\rho_{\beta,N}dy  
    - \frac{1}{2}\int_{\partial E}\left(\frac{\rho_{t}}{\rho_{\beta,N}}-1\right) n\cdot \rho_{\beta,N}\nabla\frac{\rho_{t}}{\rho_{\beta,N}}d\sigma.
\]
The boundary term vanishes because $n\cdot \rho_{\beta,N}\nabla\frac{\rho_{t}}{\rho_{\beta,N}}$ is zero on that part of the boundary $\partial E$ that intersects with $\partial D$ due to reflecting boundary conditions, while on the remaining part of the boundary $\{x_{1}=0\}\cap\partial E$  and $\{x_{1}=l\}\cap \partial E$ the integrand takes opposite values due to $l$-periodicity of $\rho_{t}$ and $\rho_{\beta,N}$ along $\hat{e}$, and the fact that $n(x)=(1,0,\dots, 0)$ for $x\in \{x_{1}=0\}$ and $n(x)=(-1,0,\dots, 0)$ for $x\in \{x_{1}=l\}$.
\[
   \frac{d}{dt}\int_{E}\left(\frac{\rho_{t}}{\rho_{\beta,N}}-1\right)^2 \rho_{\beta,N}dy 
   =-\int_{E}\left|\nabla\frac{\rho_{t}}{\rho_{\beta,N}}\right|^{2}\rho_{\beta,N}dy. 
\]
The Poincaré inequality from Lemma~\ref{lem::Poincare_inequality} implies 
\[
    \frac{d}{dt}\int_{E}\left(\frac{\rho_{t}}{\rho_{\beta,N}}-1\right)^2 \rho_{\beta,N}dy
    \le - \frac{1}{C}\int_{E}\left(\frac{\rho_{t}}{\rho_{\beta,N}}-1\right)^2 \rho_{\beta,N}dy,
\]
which after integration yields the claim of the proposition.
\end{proof}
\noindent This result is immediately transferred to the transition probability function $Q(\boldsymbol{z}, t, |d\boldsymbol{w}|)$  of Dyson Brownian motion, which is given by the density 
\[
    \varrho_{t}(z,w) = \rho_{t}\left(\gamma^{-1}(z_{1}), \dots, \gamma^{-1}(z_{N}) , \gamma^{-1}(w_{1}), \dots, \gamma^{-1}(w_{N})\right).
\]
\begin{corollary}\label{cor::exp_convergence_process_on_Gamma}
Suppose $\gamma \in C^{\infty}$. There exists a constant $C=C(\beta,\gamma,N)>0$ such that 
\[
    \int_{S}\left(\frac{\varrho_{t}(\boldsymbol{z},\boldsymbol{w})}{\varrho_{\beta,N}(\boldsymbol{w})}-1\right)^2\varrho_{\beta,N}(\boldsymbol{w}) |d\boldsymbol{w}| 
    \le \exp\left(-\frac{t}{C}\right).
\]
In particular, the transition probability function $Q(\boldsymbol{z}, t, |d\boldsymbol{w}|)$ converges weakly, as $t\to+\infty$, to the stationary distribution $\varrho_{\beta,N}(\boldsymbol{w})|d\boldsymbol{w}|$ on $(S, \mathcal{B}_{S})$ given by the density 
\[
    \varrho_{\beta,N}(\boldsymbol{z}) 
    = \frac{1}{Z_{\beta, N}(\gamma)}\prod_{i\neq j}|z_{i}-z_{j}|^{\frac{\beta}{2}}.
\]
\end{corollary}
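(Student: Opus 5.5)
The plan is to transport Proposition~\ref{prop::exp_convergence_of_density_in_E} from $E$ to $S$ through the bijection $g:E\to S$, $g(y)=(\gamma(y_1),\dots,\gamma(y_N))$, using that $\gamma$ is an arc-length parametrization. This makes $g$ measure preserving between Lebesgue measure $dy$ on $E$ and the product arc-length measure $|d\boldsymbol{w}|=\prod_i|dw_i|$ on $S$.

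\textbf{Step 1: change of variables.} Since $|\gamma'|\equiv 1$, for every nonnegative Borel function $F$ on $S$ we have
\[
\int_S F(\boldsymbol{w})\,|d\boldsymbol{w}|=\int_E F(g(y))\,dy .
\]
It suffices to check this on product sets and apply Fubini. For a single coordinate it reads $\int_\Gamma h(w)|dw|=\int_{I} h(\gamma(s))\,ds$ over a fundamental interval, which is the definition of arc length. The only subtlety is that $E$ is not a product: its constraints $0\le y_1<\dots<y_N<y_1+l$ correspond under $g$ to the counterclockwise ordering defining $S$. Since $g$ is injective on $E$ and the diagonal-type sets have measure zero, the identity holds.

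\textbf{Step 2: identify the densities.} By construction $Q(\boldsymbol{z},t,\mathcal V)=\tilde P(x,t,g^{-1}(\mathcal V))$ with $\boldsymbol z=g(x)$. By Proposition~\ref{prop::FPK_quotient_param_process}, $\tilde P(x,t,dy)=\rho_t(x,y)dy$, so Step~1 gives $Q(\boldsymbol z,t,|d\boldsymbol w|)=\varrho_t(\boldsymbol z,\boldsymbol w)|d\boldsymbol w|$ with $\varrho_t(\boldsymbol z,\boldsymbol w)=\rho_t(g^{-1}\boldsymbol z,g^{-1}\boldsymbol w)$, matching the formula preceding the corollary. Likewise $\varrho_{\beta,N}(g(y))=\rho_{\beta,N}(y)$ directly from \eqref{def:rho} and \eqref{eq:density_z}, with the same constant $Z_{\beta,N}(\gamma)$. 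By Step~1, $Z_{\beta,N}(\gamma)$ also normalizes the density on $E$, so both are probability densities.

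\textbf{Step 3: transfer the estimate.} Apply Step~1 with
\[
F(\boldsymbol w)=\Bigl(\frac{\varrho_t(\boldsymbol z,\boldsymbol w)}{\varrho_{\beta,N}(\boldsymbol w)}-1\Bigr)^2\varrho_{\beta,N}(\boldsymbol w).
\]
The left-hand side of the corollary then equals the left-hand side of \eqref{eq::Poincare_inequality}, and the bound $\exp(-t/C)$ follows with the same $C$.

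\textbf{Step 4: weak convergence.} For bounded continuous $\phi$ on $S$, Cauchy--Schwarz gives
\[
\Bigl|\int\phi\,dQ-\int\phi\,\varrho_{\beta,N}|d\boldsymbol w|\Bigr|\le\|\phi\|_\infty\int\Bigl|\frac{\varrho_t}{\varrho_{\beta,N}}-1\Bigr|\varrho_{\beta,N}|d\boldsymbol w|\le\|\phi\|_\infty e^{-t/(2C)}.
\]
This yields weak (indeed total variation) convergence at an exponential rate.

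The main point requiring care is Step~1, specifically that $g$ is a bijection onto $S$ carrying $dy$ to $|d\boldsymbol w|$. This is needed to justify that $\varrho_t$ is genuinely the density of $Q$ with respect to $|d\boldsymbol w|$. The remaining steps are formal.
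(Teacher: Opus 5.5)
Your proposal is correct and follows the paper's proof, which simply combines Proposition~\ref{prop::exp_convergence_of_density_in_E} with the change of variables $\boldsymbol w=g(y)$ and $|\gamma'|\equiv 1$; your Steps~1--4 just spell out that one-line argument. One minor caveat: Step~1 only shows $\int_E\rho_{\beta,N}\,dy=\int_S\varrho_{\beta,N}\,|d\boldsymbol w|$, not that either equals $1$, and with $Z_{\beta,N}(\gamma)$ as defined in \eqref{def:partition-function} both actually equal $N$ (since $S$ is one of the $(N-1)!$ cyclic-order sectors of $\Gamma^N$), so the correct normalization on $E$ and $S$ is $N Z_{\beta,N}(\gamma)$ --- a slip inherited from the paper's conventions that does not affect the structure of your argument.
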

\begin{proof}
Proposition~\ref{prop::exp_convergence_of_density_in_E} and the change of variable formula, using $\gamma'=1$, immediately imply the corollary.\qedhere
\end{proof}
\noindent This concludes the proof of Theorem~\ref{thm::exp_conv_to_stationary_distribution}.

\section{Large deviations as \texorpdfstring{$\beta \to +\infty$}{β->+∞}}
We will now study the behavior of Dyson Brownian motion on $\Gamma$ as the inverse temperature parameter $\beta\to+\infty$. Since the process on the curve is a continuous image of the parametrization process, it is sufficient to study the system
\[
    dX(t) = dB(t)+\frac{1}{2}\nabla\rho_{\beta,N}(X(t))dt.
\]
After a deterministic time-change, this becomes
\[
    dX(t) = \sqrt{\kappa}dB(t) + \nabla\rho_{1,N}(X(t))dt
\]
with the small parameter $\kappa>0$ which is related to $\beta>0$ via
\[
    \kappa = \frac{2}{\beta}.
\]
We are interested in large deviations as $\kappa  \to 0$.
The recent article \cite{abuzaid2024large} establishes a large deviation principle for diffusions of ``Dyson-type'' with particles living on the unit circle, and the process is restricted to any compact time interval $[0,T]$. As we will see, this includes the parametrization process under suitable regularity assumptions on the curve $\Gamma$. First, we recall the definition of Dyson-type potential from \cite{abuzaid2024large} and show that, if one assumes the curve  $\gamma \in C^3$, the  parametrization process $X$ defined in (\ref{eq::SDE_parametrization_process_X}), after a deterministic time-change, is a Dyson-type diffusion. Then, we state the large deviation principle for the (time-changed) parametrization process $X^{x}=(X^{x}(t))_{t\ge 0}$ in the space $C_{x}([0,+\infty), D)$,  of continuous $D$-valued functions started at $x$, equipped with the topology of locally uniform convergence.

\subsection{Dyson-type diffusion}
\begin{definition}[\cite{abuzaid2024large}]\label{def::Dyson-type-potential}
 A function $U\in C^2(D,\mathbb{R})$ is said to be a Dyson-type potential if there exist constants $a>0$ and $B>0$ such that for all $x\in D$
\begin{equation}\label{eq::Dyson_type_potential}
    -B \le a\Delta U\le |\nabla U|^{2}+B.
\end{equation}
\end{definition}

\begin{proposition}
Suppose $\gamma\in C^{3}$. Then, the function 
\[
    V(x) = -\log\rho_{1,N}(x) =   - \sum_{i<j}\log|\gamma(x_{i})-\gamma(x_{j})|    
\]
is a Dyson-type potential, with some $B=B(\gamma, N)>0$ and any $a\le1$.
\end{proposition}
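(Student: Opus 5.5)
The plan is to compute $\nabla V$ and $\Delta V$ explicitly and compare them pair by pair near each collision hyperplane. Write $V=\sum_{i<j}V_{ij}$ with $V_{ij}(x)=-\log|\gamma(x_i)-\gamma(x_j)|$, so that
\[
\partial_{x_i}V=-\sum_{j\neq i}\Re\frac{\gamma'(x_i)}{\gamma(x_i)-\gamma(x_j)},\qquad
\partial_{x_i}^2V=-\sum_{j\neq i}\Re\left\{\frac{\gamma''(x_i)}{\gamma(x_i)-\gamma(x_j)}-\frac{\gamma'(x_i)^2}{(\gamma(x_i)-\gamma(x_j))^2}\right\}.
\]
Since $\gamma\in C^3$ is an injective arc-length parametrization of a Jordan curve, the ratio $|\gamma(s)-\gamma(u)|/\min(|s-u|,l-|s-u|)$ is bounded above and below on $[0,l]^2$ (by a bi-Lipschitz argument). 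Hence every pair term is smooth and bounded, together with its derivatives up to order two, on any region where $d_{ij}:=\min(x_j-x_i,\,l-(x_j-x_i))\ge\delta$. All quantities are $l$-periodic along $\hat e$, so it suffices to work on the compact closure of $E$.

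\textbf{Local expansion near a collision.} Put $h=x_j-x_i\to0^+$ (the case $h\to l^-$ is symmetric). Taylor expansion with $\gamma\in C^3$ and $|\gamma'|=1$, $\Re(\gamma''\overline{\gamma'})=0$ gives
\[
\Re\frac{\gamma'(x_i)}{\gamma(x_i)-\gamma(x_j)}=-\frac1h+O(h),\qquad \Re\frac{\gamma'(x_i)^2}{(\gamma(x_i)-\gamma(x_j))^2}=\frac1{h^2}+O(1),\qquad \Re\frac{\gamma''(x_i)}{\gamma(x_i)-\gamma(x_j)}=O(1).
\]
The cancellation of the $1/h$ term in the middle expansion uses $\Re(\gamma''\bar\gamma')=0$; this is exactly where we need $C^3$, to control the $O(1)$ remainder. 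Consequently the pair $(i,j)$ contributes $\frac{2}{d_{ij}^2}+O(1)$ to $\Delta V$, from the two coordinates $i$ and $j$, and $\pm\frac1{d_{ij}}+O(d_{ij})$ to $\partial_{x_i}V$ and $\partial_{x_j}V$ with opposite signs. Summing over pairs, we get $\Delta V=\sum_{i<j}2d_{ij}^{-2}+O(1)\ge -B$. This yields the lower bound for every $a>0$.

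\textbf{Upper bound (main obstacle).} We must show $a\Delta V\le|\nabla V|^2+B$ for $a\le1$. This reduces to
\[
|\nabla V|^2\ge\sum_{i<j}2d_{ij}^{-2}-C,
\]
since then $a\Delta V\le\Delta V\le|\nabla V|^2+B$ when $\Delta V\ge0$; when $\Delta V<0$ it is bounded below anyway. The first thing to try is the Calogero/Dyson identity: writing $\partial_iV=\sum_{j\ne i}\phi(x_i,x_j)$ with $\phi\approx 1/(x_j-x_i)$, expand $|\nabla V|^2=\sum_i\big(\sum_{j\ne i}\phi_{ij}\big)^2$. The diagonal terms give $\sum_{i<j}2\phi_{ij}^2$. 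The cross terms over triples combine via the three-term identity
\[
\frac1{(a-b)(a-c)}+\frac1{(b-a)(b-c)}+\frac1{(c-a)(c-b)}=0,
\]
valid exactly for the model $\phi=1/(x_i-x_j)$. For the curve we write $\phi_{ij}=\frac{1}{x_i-x_j}+r_{ij}$, or the periodic analogue using $\frac12\cot$ to handle wrap-around, with $r_{ij}$ bounded ($C^1$ suffices here). The triple sums then equal the model triple sums (zero, or a constant in the $\cot$ case) plus terms of the form $r\cdot\phi$, which are $O(\sum 1/d_{ij})$. The bad cross terms $\sum 1/d_{ij}$ are absorbed by $\epsilon\sum d_{ij}^{-2}+C_\epsilon$. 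This gives $|\nabla V|^2\ge(2-\epsilon)\sum d_{ij}^{-2}-C$, which is not quite enough for $a=1$.

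To close the gap, either use the exactness more carefully, or note that $\Delta V$ also carries an exact comparison with the model: $\Delta V_{\text{model}}=|\nabla V_{\text{model}}|^2+\text{const}$ for the circle, where $V_{\text{model}}$ is built from $\log|\sin(\cdot/2)|$. So compare $V$ to the model potential $V_0=-\sum\log|2\sin((x_j-x_i)l^{-1}\pi)|$, up to scaling, and write $V=V_0+R$. The key point is that $R$ is $C^2$ on $\overline D$ (after periodic identification) because $\gamma\in C^3$; this is the step to check carefully. Then
\[
\Delta V-|\nabla V|^2=(\Delta V_0-|\nabla V_0|^2)+\Delta R-2\nabla V_0\cdot\nabla R-|\nabla R|^2,
\]
and the only unbounded term is $-2\nabla V_0\cdot\nabla R=O(\sum d_{ij}^{-1})$. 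This must be shown bounded, or of favorable sign, by symmetry of $\nabla R$ across each collision hyperplane: $\partial_iR-\partial_jR=O(d_{ij})$. That estimate is precisely what the $C^3$ regularity supplies, and I expect it to be the main technical obstacle.
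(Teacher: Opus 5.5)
The comparison with the circle potential $V_0$ in your last paragraph is a correct strategy, and it differs from the paper's. As written, though, the proposal does not prove the upper bound. The two facts you defer, $R\in C^2$ and $\partial_iR-\partial_jR=O(d_{ij})$, are the entire content of that step, and you label the second one the main obstacle without proving it. Both are short.

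\emph{$R\in C^2$.} Write $R(x)=\sum_{i<j}\tilde R(x_i,x_j)$ with $\tilde R(s,u)=-\log|\gamma(s)-\gamma(u)|+\log\bigl|\tfrac{l}{\pi}\sin\tfrac{\pi(u-s)}{l}\bigr|$.
\begin{itemize}
\item Near $u=s$ one has $\tilde R(s,u)=-\log|G(s,u)|+(\text{real-analytic in }u-s)$, where $G(s,u)=\int_0^1\gamma'(s+t(u-s))\,dt$. This $G$ is $C^2$ when $\gamma\in C^3$, and $G(s,s)=\gamma'(s)\neq 0$.
\item Near $u-s=l$, use $l$-periodicity to reduce to the previous case.
\item Elsewhere $|\gamma(s)-\gamma(u)|$ is bounded below, so $\tilde R$ is $C^3$ there.
\end{itemize}
Hence $\tilde R$ is $C^2$, symmetric, and $l$-periodic in each variable. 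So $R$ has bounded derivatives up to order two on all of $\mathbb{R}^N$.

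\emph{The estimate.} By symmetry of $\tilde R$,
\[
\partial_iR-\partial_jR=(\partial_1-\partial_2)\tilde R(x_i,x_j)+\sum_{q\neq i,j}\bigl[\partial_1\tilde R(x_i,x_q)-\partial_1\tilde R(x_j,x_q)\bigr].
\]
The first term is $C^1$ and vanishes on $\{u=s\}$ and on $\{u=s+l\}$. The sum is controlled by the Lipschitz constant of $\partial_1\tilde R$. So both are $O(d_{ij})$.

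\emph{Conclusion.} Write $\partial_iV_0=\sum_{j\neq i}\psi(x_i-x_j)$ with $\psi(h)=-\tfrac{\pi}{l}\cot\tfrac{\pi h}{l}$, which is odd. Then $\nabla V_0\cdot\nabla R=\sum_{i<j}\psi(x_i-x_j)(\partial_iR-\partial_jR)$ is bounded. Combined with the exact identity $\Delta V_0-|\nabla V_0|^2\equiv\text{const}$, this gives $\bigl|\Delta V-|\nabla V|^2\bigr|\le C$. Also $\Delta V=\Delta V_0+\Delta R\ge -C$, since $\Delta V_0=\tfrac{\pi^2}{l^2}\sum_{i\neq j}\csc^2\tfrac{\pi(x_i-x_j)}{l}>0$. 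The proposition follows for every $a\le 1$, and your pairwise asymptotics are not even needed.

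A minor correction: the parenthetical ``$C^1$ suffices'' for boundedness of $r_{ij}$ is wrong. With $\gamma$ only $C^1$, $r_{ij}$ is of order $\omega(d_{ij})^2/d_{ij}$, where $\omega$ is the modulus of continuity of $\gamma'$, and this need not be bounded. It is harmless here since $\gamma\in C^3$.

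\emph{Comparison with the paper.} The paper never introduces a model potential. With $w_{ij}=\gamma'(x_i)/(\gamma(x_i)-\gamma(x_j))$, it writes
\[
\Delta V=\sum_{i\neq j}\bigl[(\Re w_{ij})^2-(\Im w_{ij})^2\bigr]-\sum_{i\neq j}\Re\frac{\gamma''(x_i)}{\gamma(x_i)-\gamma(x_j)}
\]
and $|\nabla V|^2=\sum_{i\neq j}(\Re w_{ij})^2+(\text{cyclic triple sums})$. The singular terms $(\Re w_{ij})^2$ then cancel exactly, the $-(\Im w_{ij})^2$ terms have a favourable sign, and everything reduces to finiteness of three curve constants $C_1,C_2,C_3$. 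The substantive one is $C_3$, boundedness of the triple sums. The paper asserts it for $\gamma\in C^3$ without proof, and it is exactly where your first attempt lost $\epsilon$.

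That loss disappears if you do not bound the cross terms one at a time. Write $\Re w_{ij}=\chi_{ij}+r_{ij}$ with $\chi_{ij}=\tfrac{\pi}{l}\cot\tfrac{\pi(x_i-x_j)}{l}$. By oddness of $\chi$, the cross terms in each cyclic triple sum combine into $\chi_{12}(r_{13}-r_{23})+\chi_{13}(r_{12}-r_{32})+\chi_{23}(r_{21}-r_{31})$. This is bounded because $r(s,u)=\Re w(s,u)-\tfrac{\pi}{l}\cot\tfrac{\pi(s-u)}{l}$ is Lipschitz in $s$ when $\gamma\in C^3$.

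The paper's route is shorter and needs no asymptotics. Yours, once completed, puts all curve dependence into one $C^2$ function, handles triples through the exact $\cot$ identity, and yields the stronger two-sided bound on $\Delta V-|\nabla V|^2$.
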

\begin{proof} 
In the course of the proof we set $\rho\equiv\rho_{1,N}$. The Laplacian of $V$ equals
\[
    \Delta V = - \frac{\Delta \rho}{\rho } + \left|\frac{\nabla \rho}{\rho}\right|^{2}.
\]
In terms of $\rho$ the condition (\ref{eq::Dyson_type_potential}) reads
\[
    -\frac{B}{a} + \frac{a-1}{a}\left|\frac{\nabla\rho}{\rho}\right|^{2}\le \frac{\Delta\rho}{\rho}\le \left|\frac{\nabla\rho}{\rho}\right|^{2} + \frac{B}{a}.
\]
First, we compute the square of the logarithmic derivative:
\[
    \left|\frac{\nabla\rho}{\rho}\right|^2 
    = \sum\limits_{i}^{N}\left(\sum\limits_{j:j\neq i}\Re\left\{\frac{\gamma'(x_{i})}{\gamma(x_{i})-\gamma(x_{j})}\right\}\right)^2.
\]
Then, the expression for $\Delta \rho/\rho$ can written down as 
\[
    \frac{\Delta\rho}{\rho} 
    = \left|\frac{\nabla\rho}{\rho}\right|^2
    -  \sum\limits_{i\neq j}\Re\left\{\left(\frac{\gamma'(x_{i})}{\gamma(x_{i})-\gamma(x_{j})}\right)^{2}\right\}
    + \sum\limits_{i\neq j}\Re\left\{\frac{\gamma''(x_{i})}{\gamma(x_{i})-\gamma(x_{j})} \right\}.
\]
\paragraph{$1^{\circ}.$ Upper bound.} Let us define the following constants that depend of the curve:
\begin{align*}
    &C_{1}(\gamma) := \sup\limits_{x_{1}<x_{2}<x_{1}+l}\left|\Im\left\{\frac{\gamma'(x_{1})}{\gamma(x_{1})-\gamma(x_{2})}\right\}\right|,\\
    &C_{2}(\gamma) :=\sup\limits_{x_{1}<x_{2}<x_{1}+l}\left|\Re\left\{\frac{\gamma''(x_{1})-\gamma''(x_{2})}{\gamma(x_{1})-\gamma(x_{2})} \right\}\right|.
\end{align*}
Then, $\Delta\rho/\rho$ has the upper bound 
\begin{align*}
    \frac{\Delta\rho}{\rho} 
    &\le \left|\frac{\nabla\rho}{\rho}\right|^2
    +  \sum\limits_{i\neq j}\Im\left\{\frac{\gamma'(x_{i})}{\gamma(x_{i})-\gamma(x_{j})}\right\}^{2}
    + \sum\limits_{i\neq j}\Re\left\{\frac{\gamma''(x_{i})}{\gamma(x_{i})-\gamma(x_{j})} \right\}\\
    &\le \left|\frac{\nabla\rho}{\rho}\right|^2 + N^{2}(C_{1}^{2} +C_{2}).
\end{align*}
\paragraph{$2^{\circ}.$ Lower bound.} We would like to determine the values of $a>0$ such that the two middle terms on the right hand side of 
\[
    \frac{\Delta\rho}{\rho} 
    =\frac{a-1}{a}\left|\frac{\nabla\rho}{\rho}\right|^2 
    + \frac{1}{a}\left|\frac{\nabla\rho}{\rho}\right|^2 
    - \sum\limits_{i\neq j}\Re\left\{\left(\frac{\gamma'(x_{i})}{\gamma(x_{i})-\gamma(x_{j})}\right)^{2}\right\} 
    +\sum\limits_{i\neq j}\Re\left\{\frac{\gamma''(x_{i})}{\gamma(x_{i})-\gamma(x_{j})} \right\}.
\]
are bounded from below, that is, for some $A>0$ 
\[
    \frac{1}{a}\left|\frac{\nabla\rho}{\rho}\right|^2 
    - \sum\limits_{i\neq j}\Re\left\{\frac{\gamma'(x_{i})}{\gamma(x_{i})-\gamma(x_{j})}\right\}^{2} \ge -\frac{A}{a},
\]
which yields the condition 
\[
    \frac{1}{a}\ge \sup_{x\in D} \frac{\sum\limits_{i\neq j}\Re\left\{\frac{\gamma'(x_{i})}{\gamma(x_{i})-\gamma(x_{j})}\right\}^2-\sum\limits_{i\neq j}\Im\left\{\frac{\gamma'(x_{i})}{\gamma(x_{i})-\gamma(x_{j})}\right\}^2}{\sum\limits_{i\neq j}\Re\left\{\frac{\gamma'(x_{i})}{\gamma(x_{i})-\gamma(x_{j})}\right\}^2 + \sum\limits_{i\neq j\neq k}\Re\left\{\frac{\gamma'(x_{i})}{\gamma(x_{i})-\gamma(x_{j})}\right\}\Re\left\{\frac{\gamma'(x_{i})}{\gamma(x_{i})-\gamma(x_{k})}\right\}  + A}.
\]
Define the following constant that depends on the curve:
\begin{align*}
    C_{3}(\gamma) 
    :=\sup\limits_{x_{1}<x_{2}<x_{3}<x_{1}+l}\Bigg|
    &\Re\left\{\frac{\gamma'(x_{1})}{\gamma(x_{1})-\gamma(x_{2})}\right\}\Re\left\{\frac{\gamma'(x_{1})}{\gamma(x_{1})-\gamma(x_{3})}\right\}\\
    &+\Re\left\{\frac{\gamma'(x_{2})}{\gamma(x_{2})-\gamma(x_{1})}\right\}\Re\left\{\frac{\gamma'(x_{2})}{\gamma(x_{2})-\gamma(x_{3})}\right\}\\
    &+\Re\left\{\frac{\gamma'(x_{3})}{\gamma(x_{3})-\gamma(x_{1})}\right\}\Re\left\{\frac{\gamma'(x_{3})}{\gamma(x_{3})-\gamma(x_{2})}\right\}\Bigg|.
\end{align*}
Taking $A=N^{3}C_{3}$ we deduce that $a>0$ is allowed to be $a\le 1$ and, consequently, the following inequality holds:
\[
    \frac{\Delta\rho}{\rho} 
    \ge \frac{a-1}{a}\left|\frac{\nabla\rho}{\rho}\right|^2
    - N^{3}C_{3} - N^{2}C_{2}.
\]
$3^{\circ}.$ For $\gamma\in C^{3}$, all the constants $C_{i}(\gamma)$, $i=1,2,3$, are finite, and the claim holds with $B = \max\{N^{2}(C_{1}^{2}+C_{2}),N^{3}C_{3}\}$ and any $0<a \le 1$.
\end{proof}
\subsection{Extension to the time interval \texorpdfstring{$[0,+\infty)$}{[0,+∞)}}
Let $C_{x}([0,+\infty),D)$ be the space of continuous $D$-valued functions started at $x\in D$, equipped with the topology of locally uniform convergence, that is, uniform convergence on $[0,T]$ for every $T>0$.
Define $I:C_{x}([0,+\infty),D)\to[0,+\infty]$ by
\[
    I(f) = \frac{1}{2}\int_{0}^{+\infty}\sum\limits_{i=1}^{N}\left|\dot f_{i}(t) + \partial_{i} V(f(t))\right|^2dt
\]
if $f$ is absolutely continuous and set to $+\infty$ otherwise. Define also a truncated $I_{T}:C_{x}([0,T],D)\to[0,+\infty]$ given by 
\[
    I_{T}(f) = \frac{1}{2}\int_{0}^{T}\sum\limits_{i=1}^{N}\left|\dot f_{i}(t) + \partial_{i} V(f(t))\right|^2dt
\]
if $f$ is absolutely continuous on $[0,T]$ and set to $+\infty$ otherwise.

\begin{lemma}\label{lem::good_rate_function}
The functional $I:C_{x}([0,+\infty),D)\to[0,+\infty]$ is a good rate function.
\end{lemma}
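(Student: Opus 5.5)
We need to show that each sublevel set $\{I\le \alpha\}$ is compact in $C_{x}([0,+\infty),D)$ with the topology of locally uniform convergence. This topology is metrizable, with metric $d(f,g)=\sum_{T\ge 1}2^{-T}\min(1,\sup_{[0,T]}|f-g|)$, so it suffices to show two things: sublevel sets are closed (lower semicontinuity of $I$), and they are sequentially relatively compact. The basic tool is the identity $I(f)=\sup_{T>0}I_{T}(f|_{[0,T]})$, which holds by monotone convergence since the integrand is nonnegative. We also use that each $I_T$ is a good rate function on $C_x([0,T],D)$. This is part of the large deviation principle of \cite{abuzaid2024large}, which applies because $V$ is a Dyson-type potential by the preceding proposition. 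Alternatively, I would check this directly, as outlined below.

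\textbf{Lower semicontinuity.} Suppose $f_n\to f$ locally uniformly. Then $f_n|_{[0,T]}\to f|_{[0,T]}$ uniformly for every $T$, so lower semicontinuity of $I_T$ gives
\[
I_T(f)\le\liminf_n I_T(f_n)\le\liminf_n I(f_n).
\]
Taking the supremum over $T$ yields $I(f)\le\liminf_n I(f_n)$.

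\textbf{Compactness.} Let $(f_n)$ satisfy $I(f_n)\le\alpha$. Then $I_T(f_n)\le\alpha$ for all $T$. Since $\{I_T\le\alpha\}$ is compact in $C_x([0,T],D)$, we can extract, for each $T=1,2,\dots$, a further subsequence converging uniformly on $[0,T]$. A diagonal argument then produces a subsequence converging locally uniformly to some limit $f$. The key point is that $f$ takes values in $D$ rather than merely in $\overline D$; this holds because for each $T$ the limit $f|_{[0,T]}$ lies in $\{I_T\le\alpha\}\subset C_x([0,T],D)$. By lower semicontinuity, $I(f)\le\alpha$.

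\textbf{Main obstacle: compactness of $\{I_T\le\alpha\}$ inside $D$.} If I must prove this directly rather than cite it, the hard part is keeping the paths away from $\partial D$, where $V\to+\infty$. I would use an energy estimate. Setting $g=\dot f+\nabla V(f)$, we have
\[
\frac{d}{dt}V(f)=\nabla V\cdot\dot f=\nabla V\cdot g-|\nabla V|^2\le \tfrac14|g|^2,
\]
and therefore
\[
V(f(t))\le V(x)+\tfrac12 I_T(f)\le V(x)+\alpha/2 .
\]
Hence the whole path stays in the sublevel set $\{V\le V(x)+\alpha/2\}$. This set is closed in $\mathbb{R}^N$, it is contained in $D$, and it lies at positive distance from $\partial D$ along with its $\hat e$-translates, by $l$-periodicity of $V$ and the fact that $V\to+\infty$ at $\partial D$. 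On it, $\nabla V$ is bounded by some constant $M$ because $\gamma\in C^{3}$ (in fact $C^1$ with a bounded chord-arc ratio would suffice). This gives
\[
\int_0^T|\dot f|^2\le 2\int_0^T|g|^2+2M^2T\le 4\alpha+2M^2T,
\]
so the sublevel set is equicontinuous ($1/2$-Hölder) and bounded, since $f(0)=x$. Arzelà--Ascoli then applies. Lower semicontinuity of $I_T$ follows by the standard argument: $\dot f_n\rightharpoonup\dot f$ weakly in $L^2$, $\nabla V(f_n)\to\nabla V(f)$ uniformly on the compact region, and the $L^2$ norm is weakly lower semicontinuous.
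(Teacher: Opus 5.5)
Your proof is correct, but it is organized differently from the paper's. The paper gets precompactness on all of $[0,+\infty)$ at once from the a priori bound $I(f)\ge \mathcal{D}(f)-V(x)$ (taken from the proof of Corollary 2.13 in \cite{abuzaid2024large}). That bound places $\{I\le c\}$ inside a sublevel set of the Dirichlet energy, which is compact by the Schilder-type argument. The paper then passes to the limit through $I(f^*)=\lim_T I_T(f^*)$, using lower semicontinuity of each $I_T$. You instead treat $I=\sup_T I_T\circ\pi_T$ as a projective-limit rate function: goodness of each $I_T$ plus a diagonal extraction gives compactness, with no global-in-time estimate. This is the standard argument behind goodness in the Dawson--G\"artner theorem.

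What your route buys is that the $D$-valuedness of the limit is explicit. The Dirichlet sublevel set is compact in $C_x([0,+\infty),\mathbb{R}^N)$ but not in $C_x([0,+\infty),D)$: paths $x+(1-\tfrac1n)\min(t,1)(y-x)$ with $y\in\partial D$ have bounded energy but converge to a path that touches $\partial D$. So the paper's argument tacitly needs exactly your observation to know that $f^*$ stays in $D$. Either $f^*|_{[0,T]}\in\{I_T\le c\}\subset C_x([0,T],D)$, or your confinement bound $V(f(t))\le V(x)+\alpha/2$ does the job. What the paper's route buys is a H\"older-$\tfrac12$ modulus that is uniform on all of $[0,+\infty)$, obtained from a single inequality.

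Your self-contained energy estimate for $\{I_T\le\alpha\}$ is also sound, and it shows that goodness needs only $\gamma\in C^1$. The chord-arc condition you mention is superfluous: $V\le M$, together with $|\gamma(y_i)-\gamma(y_j)|\le \operatorname{diam}\Gamma$, already bounds every chord $|\gamma(y_i)-\gamma(y_j)|$ from below, and hence bounds $\nabla V$.
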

\begin{proof}
For $f\in C_{x}([0,+\infty), D)$, by the proof of \cite[Corollary 2.13]{abuzaid2024large},  the following inequality holds
\[
    I(f)\ge \mathcal{D}(f)-V(x),
\]
where $\mathcal{D}(f)$ is the Dirichlet energy of the function $f$ given by 
\[
    \mathcal{D}(f) = \sum\limits_{i=1}^{N}\frac{1}{2}\int_{0}^{+\infty}\dot f_{i}(t)^2dt
\]
if $f$ is absolutely continuous and set to $+\infty$ otherwise. 

Let $c>0$. The above inequality implies that the subset $\{I(f)\le c \} \subset C_{x}([0,+\infty), D)$ is contained in $\{\mathcal{D}(f)\le c + V(x)\}$. The latter is compact since the Dirichlet energy $\mathcal{D}:C_{x}([0,+\infty), D)\to [0,+\infty]$ is a good rate function, see \cite[Chapter ??]{dembo2009large}. Thus, any sequence $(f_{n})\subset \{I(f)\le c \}$ has a convergent subsequence $(f_{n_{k}})$ since it is also contained in the compact set $\{\mathcal{D}(f)\le c + V(x)\}$; denote by $f^*$ its subsequential limit. 

Since the truncated functional $I_{T}:C_{x}([0,T],D)\to[0,+\infty]$ is a good rate function, see \cite[Lemma 2.15]{abuzaid2024large}, it is lower semi-continuous. 
\[ 
    I(f^{*}) = \lim \limits_{T\to+\infty}I_{T}(f^{*}) \le  \lim \limits_{T\to+\infty} \lim\limits_{k\to+\infty} I_{T}(f_{n_{k}})\le c.
\]
Hence, $f^*\in \{I(f)\le c\}$. It follows that the level set $\{I(f)\le c\}$ is a compact subset of $C_{x}([0,+\infty), D)$ for every $c>0$, and therefore the functional $I$ is a good rate function.
\end{proof}

Define the probability measure on $C_{x}([0,+\infty),D)$ by
\[  
    \mathbb{Q}_{\kappa}(E) = \mathbb{P}[X^{x}\in E], \quad E\subset C_{x}([0,+\infty),D).
\]
For $T>0$, the family $(\mathbb{Q}_{\kappa}^{T})_{\kappa>0}$ of probability measures on $C_{x}([0,T],D)$, given by  $\mathbb{Q}_{\kappa}^{T}(E) = \mathbb{P}[(X^{x}(t))_{t\in [0,T]}\in E]$, satisfies the large deviation principle, as $\kappa\to 0+$, with a good rate function given by the functional $I_{T}:C_{x}([0,T],D)\to[0,+\infty]$, see \cite[Theorem 1.8]{abuzaid2024large}.
\begin{theorem}\label{thm::extended_LDP}
The family $(\mathbb{Q}_{\kappa})_{\kappa>0}$ of probability measures on $ C_{x}([0,+\infty),D)$, equipped with the topology of locally uniform convergence, satisfies the large deviation principle with a good rate function $I:C_{x}([0,+\infty),D)\to[0,+\infty]$.
\end{theorem}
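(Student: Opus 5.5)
We will deduce the statement from the finite-horizon large deviation principles via the Dawson--Gärtner projective limit theorem (see \cite{dembo2009large}). The space $C_{x}([0,+\infty),D)$ with the topology of locally uniform convergence is the projective limit of the spaces $C_{x}([0,T],D)$, $T\in\mathbb{N}$, with restriction maps $p_{T}:f\mapsto f|_{[0,T]}$; these maps are continuous and compatible, and the locally uniform topology coincides with the projective limit topology. By \cite[Theorem 1.8]{abuzaid2024large}, applicable since $V$ is a Dyson-type potential by the preceding proposition (here $\gamma\in C^{3}$), each image measure $\mathbb{Q}_{\kappa}\circ p_{T}^{-1}=\mathbb{Q}^{T}_{\kappa}$ satisfies a large deviation principle with good rate function $I_{T}$. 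The Dawson--Gärtner theorem then gives a large deviation principle for $(\mathbb{Q}_{\kappa})_{\kappa>0}$ on $C_{x}([0,+\infty),D)$ with good rate function
\[
    \tilde I(f)=\sup_{T>0}I_{T}(f|_{[0,T]}).
\]

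It remains to identify $\tilde I$ with $I$. If $f$ is absolutely continuous on every $[0,T]$, then $I_{T}(f|_{[0,T]})$ is nondecreasing in $T$ because the integrand is nonnegative, and monotone convergence gives $\sup_{T}I_{T}(f|_{[0,T]})=\lim_{T\to\infty}I_{T}(f|_{[0,T]})=I(f)$. If $f$ fails to be absolutely continuous on some $[0,T_{0}]$, then $I_{T}(f)=+\infty$ for all $T\ge T_{0}$, so $\tilde I(f)=+\infty=I(f)$, with the convention that $I(f)=+\infty$ unless $f$ is locally absolutely continuous. Hence $\tilde I=I$. Goodness of $I$ is then automatic from Dawson--Gärtner, and it is also consistent with Lemma~\ref{lem::good_rate_function}.

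The main obstacle is checking that the hypotheses of the projective limit theorem genuinely hold in this setting. First, \cite[Theorem 1.8]{abuzaid2024large} must apply for every $T$ with the same starting point $x$, including the time change $\kappa=2/\beta$ used in Remark~\ref{time-change-remark}. Second, the state space is the open domain $D$ rather than $\overline{D}$, so the finite-horizon LDP has to be stated on $C_{x}([0,T],D)$ with a rate function that is good there. This holds because $I_{T}(f)<\infty$ forces $V(f(t))$ to be bounded on $[0,T]$ (as in the estimate $I\ge\mathcal{D}-V(x)$), which keeps paths away from $\partial D$.

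If one prefers to avoid Dawson--Gärtner, the fallback is a direct argument. For the lower bound, an open set in the locally uniform topology contains a cylinder $p_{T}^{-1}(O)$ around any of its points, and the finite-$T$ lower bound applies to that cylinder. For the upper bound, one first uses exponential tightness: the estimate $I\ge\mathcal{D}-V(x)$ combined with the finite-$T$ exponential tightness of \cite{abuzaid2024large} reduces the problem to compact sets. For a compact $K$, one uses $\inf_{p_{T}(K)}I_{T}\uparrow\inf_{K}I$ as $T\to\infty$, which follows from lower semicontinuity and a compactness argument identical to the one in Lemma~\ref{lem::good_rate_function}.
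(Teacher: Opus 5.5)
Your proof is correct, but it uses a different projective-limit theorem from the paper. The paper applies \cite[Theorem 4]{de1997exponential}. That theorem needs two inputs: an a priori functional $I$ with compact level sets (Lemma~\ref{lem::good_rate_function}), and the consistency identity $I_T(f^T)=\inf\{I(f):\pi_T(f)=f^T\}$. The paper checks the identity by continuing $f^T$ beyond time $T$ along the gradient flow $\dot u=-\nabla V(u)$ at zero cost, which in turn requires an argument that this flow exists globally inside $D$. You instead use the Dawson--G\"artner theorem, which directly produces the good rate function $\sup_T I_T\circ p_T$. The only remaining step is then the monotone-convergence identification $\sup_T I_T(f|_{[0,T]})=I(f)$, under the locally absolutely continuous convention, which you state correctly.

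Your route is shorter. It needs neither the gradient-flow continuation nor Lemma~\ref{lem::good_rate_function}. That lemma's compactness argument, as written, actually also relies on your observation that finite cost forces $V(f(t))\le I_T(f)+V(x)$, because sublevel sets of the Dirichlet energy alone are not compact in $C_x([0,+\infty),D)$ when $D$ is open. In exchange, the paper's route exhibits the optimal continuation past time $T$ explicitly. The consistency identity itself also follows from your conclusion, via the contraction principle and uniqueness of the rate function.

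The fallback in your last paragraph is unnecessary. Note also that the bound $I\ge\mathcal{D}-V(x)$ concerns the rate function and plays no role in exponential tightness of the measures $\mathbb{Q}_\kappa$.
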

\begin{proof}
For $T>0$, let $C_{x}([0,T],D)$ be the space of continuous $D$-valued functions started at $x\in D$, equipped with the topology of uniform convergence. For $T'\ge T$ let $\pi_{T',T}:C_{x}([0,T'],D)\to C_{x}([0,T],D)$ be the restriction map, that is, $\pi_{T',T}((f(t))_{t\in [0,T']}) = (f(t))_{t\in [0,T]}$. The map $\pi_{T',T}$ is continuous and surjective, $\pi_{T,T}$ is the identity, and $\pi_{T'',T} = \pi_{T',T}\circ\pi_{T'',T'}$ for $T\le T'\le T''$. Let $\pi_{T}:C_{x}([0,+\infty),D) \to C_{x}([0,T],D)$ be the restriction map. Then $\pi_{T} = \pi_{T',T}\circ\pi_{T'}$ for $T\le T'$. The topology of locally uniform convergence is the weakest topology when the maps $\pi_{T}$ are continuous. The collection $(C_{x}([0,T],D), \pi_{T', T})_{T\le T'}$ defines a projective system.

By \cite[Theorem 4]{de1997exponential}, the LDP of $(\mathbb{Q}_{\kappa}^{T})_{\kappa>0}$ extends to $(\mathbb{Q}_{\kappa})_{\kappa>0}$, if
\begin{enumerate}
\item For every $T>0$, the family $\mathbb{Q}_{\kappa}^{T} = \mathbb{Q}_{\kappa}\circ \pi_{T}^{-1}$ satisfies the LDP on $C_{x}([0,T],D)$ with the rate function $I_{T}:C_{x}([0,T],D)\to [0,+\infty]$.
\item There exists a functional $I: C_{x}([0,+\infty),D)\to [0,+\infty]$ such that it has compact level sets and satisfies for all $T>0$ and $f^{T}\in C_{x}([0,T],D)$ the identity 
\[
    I_{T}(f^{T}) = \inf\{I(f): f\in \pi_{T}^{-1}(f^{T})\}.
\]
\end{enumerate}
The first item is the content of \cite[Theorem 1.8]{abuzaid2024large}. The second item is given by the functional $I:C_{x}([0,+\infty),D)\to[0,+\infty]$:
\[
    I(f) = \frac{1}{2}\int_{0}^{+\infty}\sum\limits_{i=1}^{N}\left|\dot f_{i}(t) + \partial_{i} V(f(t))\right|^2dt
\]
if $f$ is absolutely continuous and set $+\infty$ otherwise. Lemma~\ref{lem::good_rate_function} shows that it has compact level sets. 

Now we show the required identity. Any $f^{T} \in C_{x}([0,T],D)$ can be extended to a function in $C_{x}([0,+\infty),D)$ by 
\[
    f(t) = f^{T}(t)\mathds{1}_{[0,T)}(t) + \hat{f}^{T}(t-T)\mathds{1}_{[T,+\infty)}(t),
\]
where $\hat{f}^{T}$ is a solution the gradient system 
\[
\begin{cases}
    \dot u(t) = -\nabla V(u(t)), \quad t>0,\\
    u(0) = f^{T}(T).
\end{cases}
\]
The local existence of a solution to the gradient system follows from the Peano existence theorem. The global existence follows from two facts: (1) $t\to V(g(t))$ is non-increasing since 
\[
    \frac{d}{dt}V(u(t)) 
    = \nabla V(u(t)) \cdot \dot u(t) 
    = -|\nabla V(u(t))|^2\le 0.
\]
$V(x)\to +\infty $ as $x\to\partial D$, hence a solution to the gradient system with initial value $u(0)=x\in D$ stays inside $D_{V(x)}=\{y\in D: V(y)\le V(x)\}$, in particular it never hits the boundary $\partial D$. (2) The gradient $\nabla V$ is bounded on $D_{V(x)}$, since it is continuous and $l$-periodic in the direction $\hat e =(1,\dots,1)^{T}$, which implies the linear growth bound 
\[
    |u(t)|
    \le |x|+\int_{0}^{t}|\nabla V(u(s))|ds
    \le |x| + (\sup_{y\in D_{V(x)}}|\nabla V(y)|) t.
\]

The function $f$, extended in such a way, satisfies $I(f) = I_{T}(f^{T})+ I(\hat{f}^{T})$, where the second term 
\[
    I(\hat{f}^{T})
    =\frac{1}{2}\int_{0}^{+\infty}\sum\limits_{i=1}^{N}\left|\partial_{t}\hat{f}^{T}_{i}(t) + \partial_{i} V(\hat{f}^{T}(t))\right|^2dt
\]
vanishes due to $\partial_{t} \hat{f}^{T} = -\nabla V (\hat{f}^{T})$. Thus, we obtain the identity 
\[
     I_{T}(f^{T}) = I(f) = \inf\{I(\tilde f): \tilde f \in \pi_{T}^{-1}(f^{T})\}.\qedhere
\]
\end{proof}
\noindent This completes the proof of Theorem~\ref{thm:LDP_parametrization_process_infinite_time}.
\section{Hydrodynamic limit}\label{sec::hydro}
We will now study a particular many-particle limit of the process.
Let $\boldsymbol{Z}=(Z_{1},\dots,Z_{N})$ be Dyson Brownian motion on $\Gamma$ according to Definition~\ref{def_Dyson_BM_contour}.  Consider the empirical measure of the particles on the curve
\[
    \mu_{t}^{(N)}(dz) = \frac{1}{N}\sum\limits_{i=1}^{N}\delta_{Z_{i}(t/N)}(dz),
\]
and denote 
\[
    \mu_{t}^{(N)}(f) = \int_\Gamma f(z)\mu_{t}^{(N)}(dz) = \frac{1}{N}\sum\limits_{i=1}^{N}f(Z_{i}(\tfrac{t}{N})).
\]
%Here $\delta_z$ is a Dirac mass at $z$.
\begin{theorem}[Tightness and hydrodynamic limit]\label{thm::hydro_limit}
Suppose $\gamma \in C^2$. Let $\mu_{0}$ be a probability measure on $\Gamma$ and suppose that the initial distribution of particles is chosen so that $\mu_{0}^{(N)}\to \mu_{0}$, weakly as $N\to+\infty$. Then, the family $\{(\mu_{t}^{(N)})_{t\ge 0}\}_{N\ge 1}$ is tight, and any subsequential limit $\mu= (\mu_{t})_{t\ge0}$ satisfies
\begin{align}\label{eqn::hydro_SDE_2}
    \mu_{t}(f) = \mu_0(f) + \frac{\beta}{4}\int_0^t\int_\Gamma\int_\Gamma\left(\partial_s f(z)\Re\frac{\tau(z)}{z-w}-\partial_s f(w)\Re\frac{\tau(w)}{z-w}\right)\mu_t(dz)\mu_t(dw)dt,
\end{align}
for all functions $f$ smooth in a neighborhood of $\Gamma$.
\end{theorem}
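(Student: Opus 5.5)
The argument follows the standard route for Dyson-type hydrodynamic limits, as in the real-line case. There are three steps: an Itô expansion of $\mu_t^{(N)}(f)$, tightness via Aldous/Kolmogorov-type criteria, and identification of the limit by showing the martingale part vanishes.

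\textbf{Step 1: Itô expansion.} Fix $f$ smooth in a neighborhood of $\Gamma$ and set $F(x)=f(\gamma(x))$, which is $C^2$ since $\gamma\in C^2$. By Theorem~\ref{thm:existence-intro}, $Z_i=\gamma(X_i)$ with $dX_i=dB_i+b_i(X)\,dt$. We have $F'(x_i)=\partial_s f(Z_i)$, and $F''(x_i)$ is bounded by $\|f\|_{C^2}(1+\sup|\gamma''|)$. Itô's formula at time $t/N$ gives
\[
\mu_t^{(N)}(f)=\mu_0^{(N)}(f)+\frac{1}{N}\int_0^{t/N}\sum_i\Big(\partial_s f(Z_i)b_i+\tfrac12 F''(X_i)\Big)ds+M_t^{(N)}.
\]
After the change of variables $s=u/N$, the Itô-correction term is $O(t/N)$. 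The martingale $M^{(N)}$ has quadratic variation $\frac{1}{N^2}\int_0^{t/N}\sum_i|\partial_sf(Z_i)|^2ds\le C t/N^2$, so Doob's inequality gives $\sup_{t\le T}|M_t^{(N)}|\to0$ in $L^2$.

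\textbf{Step 2: the drift term.} Substituting $b_i=\frac\beta2\sum_{j\ne i}\Re\frac{\tau(Z_i)}{Z_i-Z_j}$ and rescaling time, the drift term becomes
\[
\frac{\beta}{2N^2}\int_0^t\sum_{i\neq j}\partial_s f(Z_i)\Re\frac{\tau(Z_i)}{Z_i-Z_j}\,du.
\]
Symmetrizing over $i\leftrightarrow j$ yields $\frac{\beta}{4N^2}\sum_{i\ne j}\big(\partial_sf(Z_i)\Re\frac{\tau(Z_i)}{Z_i-Z_j}-\partial_sf(Z_j)\Re\frac{\tau(Z_j)}{Z_i-Z_j}\big)$. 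The key observation is that the kernel
\[
k_f(z,w)=\partial_s f(z)\Re\frac{\tau(z)}{z-w}-\partial_s f(w)\Re\frac{\tau(w)}{z-w}
\]
is bounded and continuous off the diagonal, with a bounded extension to the diagonal when $\gamma\in C^2$. In arc-length coordinates $z=\gamma(x)$, $w=\gamma(y)$ we have $\frac{\tau(z)(x-y)}{z-w}\to1$ and $\Re\frac{\tau(z)-\tau(w)}{z-w}\to 0$ as $y\to x$, exactly as in the computation \eqref{eq::monotone_drift_diverges}. Hence $k_f(z,w)$ is a difference quotient of $\partial_s f$ along the curve plus bounded terms. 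This is also where Jordan-ness matters: near-antipodal arc positions never collide, since $|z-w|$ is bounded below away from the diagonal. The diagonal terms $i=j$ are absent, but since $k_f$ is bounded they contribute only $O(1/N)$. So the drift equals $\frac\beta4\int_0^t\iint k_f\,d\mu_u^{(N)}d\mu_u^{(N)}du+O(t/N)$, with $\|k_f\|_\infty\le C_f$.

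\textbf{Step 3: tightness and identification.} By Step 2 and the martingale bound,
\[
\mathbb E|\mu_t^{(N)}(f)-\mu_s^{(N)}(f)|^2\le C_f\big(|t-s|^2+|t-s|/N^2\big).
\]
The state space $\mathcal P(\Gamma)$ is compact, so tightness of $(\mu^{(N)})$ in $C(\mathbb R_+,\mathcal P(\Gamma))$ follows from tightness of $\mu^{(N)}(f_k)$ for a countable dense family $f_k$, using Kolmogorov's criterion together with the convergent initial data. For identification, pass to a Skorokhod-coupled subsequence. The functional $\mu\mapsto\iint k_f\,d\mu\,d\mu$ is continuous in the weak topology on $\mathcal P(\Gamma)$ provided $k_f$ is continuous on $\Gamma\times\Gamma$ including the diagonal. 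Continuity there is the main obstacle: boundedness is easy, but continuity of the diagonal extension needs $\partial_s f$ differentiable along $\Gamma$ and $\tau$ $C^1$. Both hold since $f$ is smooth and $\gamma\in C^2$, with diagonal value $\partial_s^2 f(z)\cdot1$ plus a curvature term that cancels as in the $\Re\{\gamma''\overline{\gamma'}\}=0$ identity. I would verify this by writing $k_f$ in the variables $(x,y)$ and Taylor-expanding to first order with a uniform remainder. Given this continuity, letting $N\to\infty$ in the expansion yields \eqref{eqn::hydro_SDE_2}.
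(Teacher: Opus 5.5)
Your proposal is correct and follows essentially the same route as the paper: Itô's formula for $f\circ\gamma$ under the time scaling $t/N$, symmetrization of the drift into the kernel $k_f$, continuous extension of $k_f$ to the diagonal with value $\partial_s^2 f$ (via $\Re\{\gamma''\overline{\gamma'}\}=0$, with the Jordan property handling pairs away from the diagonal), an $O(1/N)$ diagonal and Itô correction, a vanishing martingale, and passage to the limit, which you make more explicit than the paper through Skorokhod coupling and weak continuity of $\mu\mapsto\iint k_f\,d\mu\,d\mu$. One small technical point: your second-moment bound $C_f(|t-s|^2+|t-s|/N^2)$ does not satisfy Kolmogorov's criterion as stated, because the exponent of $|t-s|$ must exceed one uniformly in $N$; however, the deterministic Lipschitz bound on the drift part together with your Doob estimate $\sup_{t\le T}|M^{(N)}_t|\to 0$ gives tightness directly (this is how the paper argues), and fourth moments via Burkholder--Davis--Gundy would also work.
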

\begin{proof} The proof adapts the arguments in \cite[Theorem 4.1]{cepa1997diffusing}  and \cite[Theorem 3.6]{ISlimit} to our setting.
Let $\mathcal{P}(\Gamma)$ be the space of probability measures on $\Gamma$ equipped with the topology of weak convergence; moreover, $\mathcal{P}(\Gamma)$ is metrizable. Let $\mathcal{M}_T{}=C([0,T], \mathcal{P}(\Gamma))$ be the space of continuous measure-valued processes $(\mu_{t})_{t\in[0,T]}$ equipped with the topology of uniform convergence. In the course of the proof $\mu_{\bullet}^{(N)}$ refers to the family $(\mu_{t}^{(N)})_{t\in[0,T]}$. 

We show that the random family $\{\mu_{\bullet}^{(N)}\}_{N\ge 1}\subset \mathcal{M}_{T}$ is tight for every $T>0$.  This follows from the tightness of random variables $\left\{\mu^{(N)}_{\bullet}(f)\right\}_{N\ge 1}$ in $C([0,T],\mathbb R)$ for every real-valued $f\in C^2(\mathcal{N}(\Gamma))$.

Denote $E(\boldsymbol{z}) = \frac{1}{2}\log\varrho_{\beta, N}(\boldsymbol{z})$, where the density $\varrho_{\beta, N}$ is defined in \eqref{eq:density_z}. For any real-valued $f\in C^2(\mathcal{N}(\Gamma))$, it follows from It\^o's formula, applied to the function $f\circ\gamma$, that 
\[
    f(Z_{i}(t)) = f(Z_{i}(0))
    +\int_{0}^{t}\left(\frac{1}{2}\partial^{2}_{s}f(Z_{i}(s)) 
    + (\partial_{s_{i}}E)(\boldsymbol{Z}(s))\partial_{s}f(Z_{i}(s))\right)ds 
    + \int_{0}^{t}\partial_{s}f (Z_{i}(s))dB_{i}(s).
\]
In turn, by a change of variable, we have
\begin{align}
    \mu_t^{(N)}(f)=\mu^{(N)}_{0}(f)+ 
    \frac{1}{N^2}\sum_{i=1}^{N}\int_{0}^{t}\left(\frac{1}{2}\partial_s^2 f(Z_i(\tfrac{s}{N}))
    +(\partial_{s_i}E)(\boldsymbol{Z}(\tfrac{s}{N}))\partial_s f(Z_i(\tfrac{s}{N}))\right)ds
    + M^{(N)}(t),\label{eqn:Ito_aux}
\end{align}
where 
\[
    M^{(N)}(t) = \frac{1}{N}\sum_{i=1}^{N} \int_{0}^{t}\partial_s f(Z_i(\tfrac{s}{N})) dB_{i}(\tfrac{s}{N})
\]
is a continuous martingale with 
\[
\mathbb{E}[|M^{(N)}(t)|^2]=\frac{1}{N^2} \sum_{i=1}^N\int_0^t \mathbb{E}[|\partial_sf(Z_i(\tfrac{s}{N}))|^2] ds.
\]

For the term with the second tangential derivative in \eqref{eqn:Ito_aux} we have the following deterministic bound
\begin{equation}\label{eqn:aux2}
\left|\frac{1}{N^2}\sum_{i=1}^{N}\int_{0}^{t}\frac{1}{2}\partial_s^2 f(Z_i(\tfrac{s}{N}))ds\right|\le \frac{T}{2N}\max_{z\in\Gamma}\left|\partial_s^2 f(z)\right|.
\end{equation}
The term with the interaction potential $E$ in \eqref{eqn:Ito_aux}  can be expressed as 
\begin{align*}
    &\frac{1}{N^{2}}\sum\limits_{i=1}^{N}(\partial_{s_i}E)(\boldsymbol{Z})\partial_s f(Z_i) \\
    &= \frac{\beta}{2N^{2}}\sum_{i=1}^N \sum_{j \neq i}\partial_{s}f(Z_{i})\Re\left\{\frac{\tau(Z_{i})}{Z_{i}-Z_{j}}\right\}\\
    &=\frac{\beta}{4N^{2}}\sum_{i=1}^N \sum_{j \neq i}\partial_{s}f(Z_{i})\left(\partial_{s}f(Z_{i})\Re\left\{\frac{\tau(Z_{i})}{Z_{i}-Z_{j}}\right\}-\partial_{s}f(Z_{j})\Re\left\{\frac{\tau(Z_{j})}{Z_{i}-Z_{j}}\right\}\right)\\
    &= \frac{\beta}{4} \iint_{(\Gamma\times\Gamma ) \smallsetminus \{z=w\}}\left(\partial_{s}f(z)\Re\left\{\frac{\tau(z)}{z-w}\right\}-\partial_{s}f(w)\Re\left\{\frac{\tau(w)}{z-w}\right\}\right)\mu^{(N)}_{\cdot}(dz)\mu^{(N)}_{\cdot}(dw).
\end{align*}
For $z,w\in \Gamma$ and $w\to z$ we have 
\begin{align}\label{eq::diagonal_first_line}
    &\lim_{w\to z}\left(\partial_{s}f(z)\Re\frac{\tau(z)}{z-w} - \partial_{s}f(w)\Re\frac{\tau(w)}{z-w}\right)\\
    &=\lim_{w\to z}\Re\left\{\tau(z)\frac{\partial_{s}f(z)-\partial_{s}f(w)}{z-w}\right\}
    +\lim_{w\to z}\Re\left\{\frac{\tau(z)-\tau(w)}{z-w}\partial_{s}f(w)\right\} \\
    &= \Re\left\{\tau(z)\partial_{z}\partial_{s}f(z)\right\} + \Re\left\{-ik(z)\partial_{s}f(z)\right\}\\
    &= \partial_{s}^{2}f(z),\label{eq::diagonal_last_line}
\end{align}
where $k(z)$ is the curvature of the curve at the point $z\in\Gamma$.
%By continuity, for $z\in\Gamma$ and $x_n\to z$ along $\Gamma$, we have
%\[\lim_{n\to\infty}\left(\partial_sf(z)\Re\frac{\tau(z)}{z-x_n}-\partial_s f(x_n)\Re\frac{\tau(x_n)}{z-x_n}\right)=\Re{\tau(z)}\partial_s(\tau \Re\partial_s f)(z)+i\Re{\tau(z)}\partial_s(\tau \Im\partial_s f)(z).\]
Thus, we obtain
\begin{align}\label{eqn:aux3}
    &\frac{1}{N^2}\sum_{i=1}^{N}(\partial_{s_i}E)(\boldsymbol{Z})\partial_s f(Z_i)\\
    &=\frac{\beta}{4}\int_\Gamma\int_\Gamma \left(\partial_s f(z)\Re\frac{\tau(z)}{z-w}-\partial_sf(w)\Re\frac{\tau(w)}{z-w}\right)\mu_{\cdot}^{(N)}(dz)\mu_{\cdot}^{(N)}(dw)\\
    &\hspace{9cm}- \frac{\beta}{4N}\int_{\Gamma}\partial_{s}^{2}f(z)\mu^{(N)}_{\cdot}(dz)\\
    &\le \frac{\beta}{4}\sup_{(z,w)\in \Gamma\times\Gamma}\left|\partial_s f(z)\Re\frac{\tau(z)}{z-w}-\partial_sf(w)\Re\frac{\tau(w)}{z-w}\right|
    + \frac{\beta}{4N} \max_{z\in\Gamma}|\partial_{s}^{2}f(z)|.
\end{align}
%\begin{align}\label{eqn:aux3}
%&\frac{1}{N^2}\sum_{i=1}^{N}(\partial_{s_i}E)(\boldsymbol{Z})\partial_s f(Z_i)\\
%&\begin{aligned}
%=&\frac{\beta}{4}\int_\Gamma\int_\Gamma \left(\partial_s f(x)\Re\frac{\tau(x)}{x-y}-\partial_sf(y)\Re\frac{\tau(y)}{x-y}\right)\mu_t^{(N)}(dx)\mu_t^{(N)}(dy)\\
%&-\beta\frac{1}{N}\int_\Gamma\left(\Re{\tau(z)}\partial_s(\tau \Re\partial_s f)(z)+i\Re{\tau(z)}\partial_s(\tau \Im\partial_s f)(z)\right)\mu_t^{(N)}(dz).
%\end{aligned}
%\end{align}
%The last term satisfies the deterministic bound
%\begin{equation}\label{eqn:aux4}
%    \left|\frac{\beta}{4N}\int_{\Gamma}\partial_{s}^{2}f(z)\mu^{(N)}_{t}(dz)\right|
%    \le \frac{\beta}{4N} \max_{z\in\Gamma}|\partial_{s}^{2}f(z)|.
%\end{equation}
%\begin{align}\label{eqn:aux4}
%&\left|\frac{1}{N}\int_\Gamma\left(\Re{\tau(z)}\partial_s(\tau %\Re\partial_s f)(z)+i\Re{\tau(z)}\partial_s(\tau \Im\partial_s f)(z)\right)\mu_t^{(N)}(dz)\right|\notag\\
%&\le\frac{1}{N}\left(\max_{z\in\Gamma}|\Re{\tau(z)}\partial_s(\tau \Re\partial_s f)(z)|+\max_{z\in\Gamma}|\Re{\tau(z)}\partial_s(\tau \Im\partial_s f)(z)|\right).
%\end{align}
Because $f\in C^{2}$ and \eqref{eq::diagonal_first_line}-\eqref{eq::diagonal_last_line}, the following constants are finite: 
\begin{align*}
    &C_{1}(\Gamma,f):=\sup_{(z,w)\in \Gamma\times\Gamma}\left|\partial_s f(z)\Re\frac{\tau(z)}{z-w}-\partial_sf(w)\Re\frac{\tau(w)}{z-w}\right|,\\
    &C_{2}(\Gamma, f) := \max_{z\in\Gamma}|\partial_{s}^{2}f(z)|. 
\end{align*}
Combining~\eqref{eqn:Ito_aux}--\eqref{eqn:aux3} together, we obtain 
\begin{align*}
    |\mu^{(N)}_{t}(f)-\mu^{(N)}_{s}(f)| 
    \le (t-s)\frac{\beta C_{1}(\Gamma,f)}{4}+   \frac{T(1+\beta/2)C_{2}(\Gamma,f)}{2N} + 2\max_{0\le t\le T}|M^{(N)}(t)|,
\end{align*}
and
\[
\begin{split}
    &\mathbb{P}\left[\sup_{|t-s|<\delta}|\mu^{(N)}_{t}(f)-\mu^{(N)}_{s}(f)| >\eta\right]\\
    &\le \mathbb{P}\left[\max_{0\le t\le T}|M^{(N)}(t)| >\frac{\eta}{2}-\frac{\delta\beta C_{1}(\Gamma, f)}{8}- \frac{T(1+\beta/2)C_{2}(\Gamma,f)}{4N}\right].
\end{split}
\]
For every $\varepsilon>0$, by Doob's martingale inequality, we can choose $N_{0}=N_{0}(\varepsilon)$ large enough such that for all $N\ge N_{0}$ 
\begin{align}\label{eqn:aux1}
    \mathbb P\left[\max_{0\le t\le T}\left|M^{(N)}(t)\right|\ge \varepsilon\right]
    &\le \frac{1}{\varepsilon^2}\mathbb E\left[\left|M^{(N)}(T)\right|^2\right]\\
    &= \frac{1}{\varepsilon^2 N^{2}} \sum_{i=1}^{N}\mathbb{E}\left[\int_{0}^{T}|\partial_{s}f(Z_{i}(\tfrac{s}{N}))|^{2}ds\right]\\
    &\le \frac{T \max_{z\in\Gamma}|\partial_{s}f(z)|^{2}}{\varepsilon^2 N}\\
    &\le \varepsilon.
\end{align}
It follows that for every $\varepsilon,\eta>0$ there exist $\delta>0$ and $N_{0}\in\mathbb{N}$ such that 
\[
    \mathbb{P}\left[\sup_{|t-s|<\delta}|\mu^{(N)}_{t}(f)-\mu^{(N)}_{s}(f)| >\eta\right]
    \le\varepsilon\qquad \forall N\ge N_{0}.
\]
By the Arzelà-Ascoli theorem, see \cite[Theorem 7.3]{billingsley2013convergence}, the sequence $\{\mu_{\bullet}^{(N)}(f)\}_{N\ge 1}$ of random processes in $C([0,T],\mathbb{R})$ is tight, and therefore the sequence $\{\mu_{\bullet}^{(N)}\}_{N\ge 1}$ in $\mathcal{M}_{T}$ is tight for any $T>0$.

Moreover, the proof shows that 
\[
\begin{split}
    \mu_{t}^{(N)}(f)
    =&\mu_{0}^{(N)}(f)\\
    &+\int_{0}^{t}\frac{\beta}{4}\int_\Gamma\int_\Gamma\left(\partial_s f(z)\Re\frac{\tau(z)}{z-w}-\partial_s f(w)\Re\frac{\tau(w)}{z-w}\right)\mu^{(N)}_s(dz)\mu^{(N)}_s(dw)ds \\
    &+ R^{(N)}(t),
\end{split}
\]
where $\sup_{t\in[0,T]}|R^{(N)}(t)|\to0$, as $N\to+\infty$, almost surely for any $T>0$. Consequently, any subsequential limit $\mu_{\bullet}$ is deterministic (since the initial condition is deterministic) and  satisfies \eqref{eqn::hydro_SDE_2}.
%which uniquely determines the subsequential limit.
\end{proof}

\begin{remark}
Suppose $\gamma\in C^{\infty}$. Assume that for each $N$, $\mu_{0}^{(N)}$ is the empirical measure corresponding to the initial configuration distributed according to the stationary law \eqref{eq:density_z} for Dyson Brownian motion on $\Gamma$. Then as $N\to+\infty$, $\mu_{0}^{(N)}$ converges weakly towards the electrostatic equilibrium measure $\nu_{\textrm{eq}}$ on $\Gamma$, that is, the harmonic measure on $\Gamma$ seen from $\infty$. Indeed, this follows from Corollary~1.4 of \cite{Courteaut-Johansson-2025}. By Theorem~\ref{thm::hydro_limit}, if $f$ is a smooth function on a neighborhood of $\Gamma$, we therefore have
\begin{equation}\label{eqn::hydro_ODE}
\int_\Gamma\int_\Gamma\left(\partial_s f(x)\Re\frac{\tau(x)}{x-y}-\partial_s f(y)\Re\frac{\tau(y)}{x-y}\right)\nu_{\textrm{eq}}(dz)\nu_{\textrm{eq}}(dw)=0.
\end{equation}
Hence, the electrostatic equilibrium measure $\nu_{\textrm{eq}}$ is a stationary solution to \eqref{eqn::hydro_ODE}. We expect that $\nu_{\textrm{eq}}$ is the unique stationary solution under suitable regularity assumptions. 
\end{remark}

\appendix
\section{Stationary FPK equation with gradient drift}
\begin{proposition}\label{prop:unique_station_FPK_smooth_case}
    Let $\rho\in C^{\infty}(D)$ and $\rho>0$ on $D$. If $u:\overline{D}\to\mathbb{R}$ is a non-negative solution to the stationary Fokker-Planck-Kolmogorov equation 
    \begin{align*}
        \text{div}\left(\frac{1}{2}\nabla u - u\frac{\nabla\rho}{2\rho}\right)=0
    \end{align*}
    in $D$, with reflecting boundary conditions, then there exist a constant $c>0$ such that
    $u = c \rho$.
\end{proposition}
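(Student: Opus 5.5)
The plan is to reduce the equation to a weighted Laplace-type equation for the ratio $h = u/\rho$ and to run the standard energy (integration by parts) argument. Since $\rho\in C^\infty(D)$ and $\rho>0$ on $D$, elliptic regularity makes $u$ smooth in $D$, so $h := u/\rho$ is smooth there. Using $\nabla u - u\nabla\rho/\rho = \rho\nabla(u/\rho)$, the equation becomes
\[
    \operatorname{div}\left(\rho\nabla h\right) = 0 \quad \text{in } D,
\]
and the reflecting boundary condition reads $n\cdot \rho\nabla h = 0$ on $\partial D$, in the weak sense of the proof of Proposition~\ref{prop::FPK_X_process}. It therefore suffices to show that $h$ is constant; then $u = c\rho$, and $c>0$ as long as $u\not\equiv 0$.

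Next I would multiply by $h$ and integrate. Because $D$ is unbounded along $\hat e$, I would work on the periodic cell $E$ from \eqref{eq:domain_E}, assuming, as in the paper's setting, that $u$ and hence $h$ are $l$-periodic along $\hat e$. Alternatively, one can insert a cutoff $\chi_R$ and let $R\to\infty$. Integrating by parts on $K\uparrow E$ gives
\[
    \int_E |\nabla h|^2 \rho\, dy = \lim_{K\uparrow E}\int_{\partial K} h\, n\cdot\rho\nabla h\, d\sigma .
\]
The boundary contribution vanishes for two reasons. On the faces of $\partial E$ lying in $\partial D$ it is zero by the reflecting condition. On the two faces $\{x_1=0\}$ and $\{x_1=l\}$ the terms cancel by periodicity and the opposite normals, exactly as in the proof of Proposition~\ref{prop::exp_convergence_of_density_in_E}. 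Hence $\nabla h\equiv 0$ on the connected open set $D$, so $h$ is constant.

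The main obstacle is justifying the limit $K\uparrow D$ with the test function $h$ itself, which need not be bounded near $\partial D$ where $\rho\to 0$. The weak reflecting condition is only formulated for test functions $f\in C^2(\overline D)$. I would first try truncation: replace $h$ by $h_M=\min(h,M)$, whose gradient is $\nabla h\,\mathds 1_{\{h<M\}}$. Repeating the computation yields $\int_{\{h<M\}}|\nabla h|^2\rho \le 0$ up to a boundary term controlled by the weak reflecting condition. If that fails, I would mollify $h_M$ to a $C^2(\overline D)$ function with vanishing normal derivative near the faces and send the mollification and $M$ to their limits. Either way, letting $M\to\infty$ gives $\int_D|\nabla h|^2\rho=0$, which completes the argument.
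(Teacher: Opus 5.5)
\textbf{Gap: the unbounded direction of $D$.} You carry out the energy identity on the cell $E$ assuming that $u$ is $l$-periodic along $\hat e$. That is not a hypothesis of the proposition, which is about arbitrary non-negative solutions on $D$; the paper's proof assumes nothing about $u$ beyond $u\ge 0$.

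Your fallback, testing with $h\chi_R^2$ for a cutoff $\chi_R$, leaves the error term $\int h^2\rho|\nabla\chi_R|^2$, and nothing controls it. In fact no argument on all of $D$ that ignores the sign of $u$ can succeed. Take the unit circle: $\rho$ is invariant under $x\mapsto x+t\hat e$, so $h=\hat e\cdot x$ satisfies $\operatorname{div}(\rho\nabla h)=\hat e\cdot\nabla\rho=0$. Every face normal of $D$ (and of $D_\delta$) is orthogonal to $\hat e$, so $n\cdot\rho\nabla h=\rho\,n\cdot\hat e=0$ on every face. Thus $u=\rho\,(\hat e\cdot x)$ is a signed solution with zero normal flux and non-constant ratio $u/\rho$, and for it $\int h^2\rho|\nabla\chi_R|^2$ is of order $R$. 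Your main computation never uses $u\ge 0$, so it cannot exclude this example.

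\textbf{Missing idea.} You need to use $u\ge 0$ through a bounded function of $h$ chosen so that the cutoff error can be absorbed. The paper tests with $f(h)\psi\xi_R$, where $f(t)=-1/(1+t)$, $\xi_R$ is a cutoff at scale $R$, and $\psi$ approximates $\mathds{1}_{K_{\delta,cR}}$ with $K_{\delta,cR}=D_\delta\cap\overline{B_{cR}}$.
\begin{itemize}
\item Because $f'=f^2$, the left-hand side is $\int f(h)^2|\nabla h|^2\rho$.
\item Cauchy--Schwarz bounds the cutoff term by $\frac{C}{R}\bigl(\int f(h)^2|\nabla h|^2\rho\bigr)^{1/2}\bigl(\int_{K_{\delta,cR}}\rho\bigr)^{1/2}$. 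Here $\int_{K_{\delta,cR}}\rho=O(R)$ is a property of $\rho$ and the prism $D$ only, not of $u$.
\item Absorbing this term and letting $R\to\infty$ leaves only the flux through $\partial D_\delta$. The reflecting condition removes it as $\delta\to 0$, giving $\int_D\frac{|\nabla h|^2}{(1+h)^2}\rho=0$.
\end{itemize}

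Your truncation could do the same job if you used it globally. Test with $(h_M-M)\chi_R^2$ on $D_\delta$ and use $0\le h_M\le M$. This gives $\int_{D_\delta}\rho|\nabla h_M|^2\chi_R^2\le 4M^2\int_{D_\delta}\rho|\nabla\chi_R|^2+2F_\delta$, where $F_\delta$ is the absolute value of the flux term on $\partial D_\delta$ and the first term is $O(M^2/R)$. As written, however, you use truncation only against the blow-up of $h$ at $\partial D$. So the proposal proves the claim only for periodic $u$.

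Separately, applying the weak reflecting condition to a test function not known to lie in $C^2(\overline D)$ is equally heuristic in the paper, which does it with $f(u/\rho)$. That step is not where your proposal falls short.
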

\begin{proof}[Proof of Proposition~\ref{prop:unique_station_FPK_smooth_case}]
$1^{\circ}$ The assumptions on $\rho$ imply that $\frac{\nabla\rho}{2\rho} \in C^{\infty}(D)$. Let $u$ be a solution to $L^{*}u=0$ which can be rewritten as 
\[
\text{div}\left(\frac{1}{2}\nabla u - u\frac{\nabla\rho}{2\rho}\right)=0.
\]
Since the coefficients of this equation are smooth in $D$ Weyl's regularity theorem, see \cite[Chapter I]{bogachev2015FPK}, implies that a solution $u$ must be $C^{\infty}(D)$. In the weak form it is written as
\[
    \int_{ D}\nabla\phi\cdot \rho\nabla\left(\frac{u}{\rho}\right)dx =0 \quad \forall\varphi\in C_{0}^{\infty}( D).
\]
Let $f(t)=\frac{-1}{1+t}$, $\psi\in C^{\infty}_{0}( D)$ and $\xi\in C^{\infty}_{0}(\mathbb{R}^{N})$, then we can take $\varphi = f(\frac{u}{\rho})\psi\xi$ as a test function. 
\begin{align*}
    \int_{ D}\left|\nabla\frac{u}{\rho}\right|^{2}f'\left(\frac{u}{\rho}\right)\psi \rho dx 
    = -\int_{ D}  f\left(\frac{u}{\rho}\right) \rho\nabla\frac{u}{\rho}\cdot \xi\nabla\psi dx 
      -\int_{ D}  f\left(\frac{u}{\rho}\right) \rho\nabla\frac{u}{\rho}\cdot \psi\nabla\xi dx.
\end{align*}

$2^{\circ}.$ Let $\xi=\xi_{R}\in C^{\infty}_{0}(\mathbb{R}^{N})$ be given by $\xi_{R}(x)=\eta(\frac{x}{R})$ for some $\eta\in C^{\infty}_{0}(\mathbb{R}^{N})$, $0\le\eta\le1$ and $\eta(x)=1$ for $|x|<R$. Let $D_{\delta} = \{x\in D: \text{dist}(x,\partial D)\ge\delta\}$, $K_{\delta,R'} = D_{\delta}\cap \overline{B_{R'}}$, where $B_{R'}$ is a ball of radius $R'$ around the origin. $K_{\delta, R'}$ is a compact subset of $D$ with Lipschitz boundary. Let $\psi = \psi_{\varepsilon} = \mathds{1}_{K_{\delta, R'}}*\alpha_{\varepsilon}$ be a mollification of the indicator function.

The sequence $\psi_{\varepsilon}$ approximates the indicator function $\mathds{1}_{K_{\delta, R'}}$. The measure $\nabla \psi_{\varepsilon}dx$ converges weakly to the surface $-n d\sigma$ measure on $\partial K_{\delta, R'}$, where $n$ is the inward-pointing normal unit vector,  \cite[Chapter 5]{evans2025measure}:
\begin{align*}
    \lim\limits_{\varepsilon\to 0+}\int g\cdot  \nabla\psi_{\varepsilon}dx
    = -\int_{\partial K_{\delta, R'}} g\cdot n dS\quad\forall g\in C_{b}(U(\partial K_{\delta, R'})),
\end{align*}
where $U(\partial K_{\delta, R'})$ is an open neighborhood of $\partial K_{\delta, R'}$. The function $f\left(u/\rho\right)\rho\nabla\left(u/\rho\right)\xi_{R}$ is smooth on $D$, so it leads to 
\begin{align*}
    \lim\limits_{\varepsilon\to 0+}\int_{D}f\left(\frac{u}{\rho}\right) \rho\nabla\frac{u}{\rho}\cdot\xi\nabla\psi_{\varepsilon}dx 
    = -\int_{\partial K_{\delta, R'}}f\left(\frac{u}{\rho}\right)  \rho n\cdot\nabla\left(\frac{u}{\rho}\right) \xi_{R} d\sigma.
\end{align*}
and the equality 
\begin{align*}
    \int_{ K_{\delta, R'}}\left|\nabla\frac{u}{\rho}\right|^{2}f\left(\frac{u}{\rho}\right)^2\rho dx
    = \int_{\partial K_{\delta, R'}}f\left(\frac{u}{\rho}\right)n\cdot\rho\nabla\frac{u}{\rho} \xi_{R} d\sigma 
    -\int_{ K_{\delta, R'}}  f\left(\frac{u}{\rho}\right) \rho\nabla\frac{u}{\rho}\cdot \nabla\xi_{R} dx.
\end{align*}
The last term can be estimated by
\begin{align*}
    \left|\int_{ K_{\delta, R'}}  f\left(\frac{u}{\rho}\right) \rho\nabla\frac{u}{\rho}\cdot \nabla\xi_{R} dx\right| 
    \le \frac{||\eta||_{\infty}}{R}\left(\int_{K_{\delta, R'}} f\left(\frac{u}{\rho}\right)^{2}\left|\nabla\frac{u}{\rho}\right|^{2}\rho dx\right)^{1/2}\left(\int_{K_{\delta, R'}} \rho dx\right)^{1/2}.
\end{align*}
The integral $\int_{K_{\delta, R'}} \rho dx$ is order of $R'$. Taking $R' = cR$ for some $c>1$ we can find a constant $C>0$ such that 
\begin{align*}
    \int_{K_{\delta, R'}} f\left(\frac{u}{\rho}\right)^{2}\left|\nabla\frac{u}{\rho}\right|^{2}\rho dx
    \le \left|\int_{\partial K_{\delta, R'}}f\left(\frac{u}{\rho}\right)n\cdot\rho\nabla\frac{u}{\rho} \xi_{R} d\sigma \right| 
    + \frac{C}{\sqrt{R}}\left(\int_{K_{\delta, R'}} f\left(\frac{u}{\rho}\right)^{2}\left|\nabla\frac{u}{\rho}\right|^{2}\rho dx\right)^{1/2}.
\end{align*}
Hence,
\begin{align*}
    \int_{K_{\delta, R'}} f\left(\frac{u}{\rho}\right)^{2}\left|\nabla\frac{u}{\rho}\right|^{2}\rho dx
    \le \frac{1}{4}\left(\frac{C}{\sqrt{R}}+ 
    \sqrt{\frac{C^2}{R}+ 4  \left|\int_{\partial K_{\delta, R'}}f\left(\frac{u}{\rho}\right)n\cdot\rho\nabla\frac{u}{\rho}  d\sigma \right| }\right)^{2}.
\end{align*}
Taking the limit $R\to\infty$
\begin{align*}
    \int_{ D_{\delta}}f\left(\frac{u}{\rho}\right)^{2}\left|\nabla\frac{u}{\rho}\right|^{2}\rho dx
    \le \left|\int_{\partial D_{\delta}}f\left(\frac{u}{\rho}\right)n\cdot\rho\nabla\frac{u}{\rho}d\sigma\right|.
\end{align*}
$3^{\circ}.$ Reflecting boundary condition 
\begin{align*}
    \lim\limits_{K\uparrow D}\int_{\partial K}\varphi \left(n\cdot\rho\nabla\frac{u}{\rho}\right)d\sigma 
    =0 \quad\forall \varphi\in C^{2}(\overline{D})
\end{align*}
leads to 
\begin{align*}
    \int_{ D}\left|\nabla\frac{u}{\rho}\right|^{2}\frac{1}{(1+u/\rho)^2}\rho dx
    = 0.
\end{align*}
Thus, $u/\rho$ must be a constant on $D$.
\end{proof}

\printbibliography
\end{document}
\typeout{get arXiv to do 4 passes: Label(s) may have changed. Rerun}